\documentclass[11pt,reqno]{amsart}
\usepackage[T1]{fontenc}
\usepackage{graphicx, amsbsy, amssymb, amsmath, stmaryrd, mathrsfs}
\usepackage{cite}
\usepackage{color}
\definecolor{MyLinkColor}{rgb}{0,0,0.4}

\newcommand{\R}{\mathbb{R}}

\newcommand{\bD}{\mathbb{D}}
\newcommand{\bB}{\mathbb{B}}
\newcommand{\bT}{\mathbb{T}}

\newcommand{\sfe}{\mathsf{e}}

\newcommand{\Z}{{\mathbb Z}}
\newcommand{\N}{{\mathbb N}}

\newcommand{\sfB}{\mathsf{B}}

\newcommand{\sfT}{\mathsf{T}}
\newcommand{\sfA}{\mathsf{A}}
\newcommand{\sfH}{\mathsf{H}}

\newcommand{\kH}{\mathcal{H}}
\newcommand{\cO}{\mathcal{O}}

\newcommand{\kL}{\mathcal{L}}

\newcommand{\cV}{\mathcal{V}}
\newcommand{\cI}{\mathcal{I}}

\newcommand{\wh}{\widehat}
\newcommand{\wt}{\widetilde}

\newcommand{\re}{\mathop{\rm Re}\nolimits}

\newcommand{\PV}{\mathop{\rm PV}\nolimits}
\newcommand{\ov}{\overline}
\newcommand{\p}{\partial}

\newcommand{\e}{\varepsilon}
\newcommand{\0}{\Omega}
\newcommand{\G}{\Gamma}

\newcommand{\supp}{\mathop{\rm supp}\nolimits}
\newcommand{\mn}{{\mathrm n}}
\newcommand{\mt}{{\mathrm t}}

\newtheorem{thm}{Theorem}[section]
\newtheorem{prop}[thm]{Proposition}
\newtheorem{lemma}[thm]{Lemma}

\theoremstyle{remark} 
\newtheorem{rem}[thm]{Remark}

\textwidth16.5cm
\oddsidemargin0.025cm
\evensidemargin0.025cm
\textheight21cm

\numberwithin{equation}{section} 
\usepackage[colorlinks=true,linkcolor=MyLinkColor,citecolor=MyLinkColor]{hyperref}

\begin{document}

\title[The Hele-Shaw Problem via potential theory]{A potential theory approach to the capillarity-driven Hele-Shaw problem}

\author{Bogdan--Vasile Matioc}
\address{Fakult\"at f\"ur Mathematik, Universit\"at Regensburg,   93053 Regensburg, Deutschland.}
\email{bogdan.matioc@ur.de}

\author{Christoph Walker}
\address{Leibniz Universit\"at Hannover\\
Institut f\"ur Angewandte Mathematik\\
Welfengarten 1\\
30167 Hannover\\
Germany}
\email{walker@ifam.uni-hannover.de}

\thanks{Partially supported by the RTG 2339
 ``Interfaces, Complex Structures, and Singular Limits'' of the German Science Foundation (DFG)}

\begin{abstract}
  In this paper, we demonstrate that potential theory  provides a powerful  framework for analyzing quasistationary fluid flows in bounded geometries, 
  where the bulk dynamics are governed by elliptic equations with constant coefficients. 
  This approach is illustrated by the two-dimensional Hele-Shaw problem with surface tension, 
  for which we derive local well-posedness and parabolic smoothing in (almost) optimal function spaces. 
 In addition, we establish a generalized principle of linearized stability for a particular class of abstract quasilinear parabolic problems, which  enables us to show that the stationary solutions
   to the Hele-Shaw problem are exponentially stable.
\end{abstract}

\subjclass[2020]{35B35; 35K59; 35R37; 76D27}
\keywords{Hele-Shaw Problem; Surface tension; Singular integrals; Subcritical spaces; Stability}

\maketitle

\pagestyle{myheadings}
\markboth{\sc{B.-V.~Matioc \& Ch. Walker }}{}

 \section{Introduction  and  main  results}\label{SEC:1}
 The surface tension-driven Hele-Shaw problem~\cite{HS98} is a classical model in fluid mechanics that describes the motion of an incompressible fluid confined within a narrow channel between two transparent flat plates. Due to the small gap separating the plates, the flow is effectively uniform in the transverse direction, and the problem reduces to a two-dimensional model. 
 Let~${\Omega(t)\subset\mathbb{R}^2,~t\geq 0}$,  represent the bounded domain occupied by the incompressible fluid at time $t\geq 0$, with corresponding velocity field $v(t)$ and pressure $ u(t)$. 
The dynamics in the bulk  is governed by Darcy's empirical law~\cite{Da56}; that is,
\[
\mathrm{div}\,v(t) = 0 \qquad \text{and} \qquad v(t) = -\nabla u(t) 
\]
for $t>0$.
Moreover, the free boundary $\Gamma(t):=\p\Omega(t)$ of the fluid evolves with normal velocity~$V(t)$  that coincides with the normal component of the fluid velocity~$v(t)$.
Taking into account surface tension effects, the pressure~$u(t)$ on~$\Gamma(t)$ is assumed to be proportional to the curvature $\kappa_{\Gamma(t)}$ of~$\Gamma(t)$, taken as positive for convex shapes.
Altogether, the surface tension-driven Hele-Shaw problem is described by the following system of equations:
\begin{subequations}\label{PB-}
\begin{equation}\label{PB1-}
\left.
\arraycolsep=1.4pt
\begin{array}{rcllll}
\Delta u(t)&=&0&\quad\text{in $\Omega(t)$,}\\
u(t)&=& \kappa_{\Gamma(t)}&\quad\text{on $\Gamma(t)$,}\\
V(t)&=&-\p_{{\rm n}_{\G(t)}}  u(t)&\quad\text{on $\Gamma(t)$}
\end{array}
\right\}
\end{equation}
for $t>0$, where, for simplicity, all material parameters are normalized to $1$, and ${\rm n}_{\G(t)}$ denotes the outer unit normal vector to~$\Gamma(t)$.
System~\eqref{PB1-} is supplemented by the initial condition
 \begin{equation}\label{IC-}
\arraycolsep=1.4pt
\begin{array}{rcllll}
\Omega(0)&=&\Omega_0,
\end{array}
\end{equation}
where $\Omega_0\subset\R^2$ is a given bounded domain.
\end{subequations}

The Hele-Shaw problem \eqref{PB-} has received considerable attention in the mathematical community; see, e.g.,~\cite{EEM11, ES97a, CP93, ES96c, MP12, Pr98, GO03, DR84, AK23, AP25x}, though this list is by no means exhaustive.
 A powerful approach to studying~\eqref{PB-} and related problems --~such as the Muskat problem, the Mullins-Sekerka flow, or the quasistationary Stokes flow~-- in bounded domains, where the bulk unknowns are determined by solving elliptic boundary value problems depending on the geometry (see the monograph~\cite{PS16}), is to apply the Hanzawa transformation~\cite{Ha81} and reformulate the problem on a fixed reference domain.
This approach, however, imposes certain regularity requirements on the geometry. 
 Moreover, the transformed equations are often quite involved and depend nonlinearly on the geometry  through the Hanzawa transform. 
 A further drawback of this method is that, after solving the transformed boundary value problems in the bulk, additional nonlinearities and nonlocalities are  introduced.
 
 An alternative approach that has been used recently in the context of the problems mentioned above  --~mainly in unbounded settings with equations defined in the entire space (see, e.g.,~\cite{AM22, MP2021, BM25x, EMM24, Ngu20, CCG11, GGS25, GMS23, CN23, FN21, LS25x, AN2021, GGPS25, GGPS23, GGPS19, DGN23, GGHP24})~-- is to employ potential theory in order to derive an explicit integral representation for the unknowns in the bulk.
This method requires less regularity  of the geometry  than the classical Hanzawa approach and yields results for initial data in (nearly) optimal function spaces within the setting of classical solutions. Moreover, it may even allow for the treatment of critical regularity cases for initial data within the framework of strong or viscosity solutions.

In this paper, we demonstrate that the potential-theoretic approach,  combined with abstract parabolic  theory, can be applied to the Hele-Shaw problem driven by surface tension, 
even in the case of a bounded domain. 
For simplicity, we restrict our analysis to a two-dimensional, star-shaped geometry $\Omega(t)$. 
However, the method is expected to extend to more general geometries and higher dimensions, albeit with increased technical complexity.
Our choice of star-shaped domains is motivated by the fact that, in such geometries, the Rellich identities (see Lemma~\ref{L:REl})~--~which are central to our analysis~--~admit an explicit and compact form that can be readily applied to study the invertibility of the double-layer potential operator.\medskip

\subsection*{Main results} 

In the following, we denote by $\bT$ the boundary of the unit disc and by $\mt$ and $\mn$ the unit tangent and the outward unit normal vectors to $\bT$, respectively. Functions defined on $\mathbb{T}$ are throughout identified with $2\pi$-periodic functions on $\mathbb{R}$.
We denote, for a given function $\rho \in \mathrm{C}^1(\mathbb{T})$ with~$\rho > 0$, by 
\[
\Gamma_\rho := \{ \rho(\tau)\mn(\tau) \,:\, \tau \in \mathbb{R} \}
\]
the ${\rm C}^1$-boundary of the star-shaped domain $\Omega_\rho \subset \mathbb{R}^2$, and note that the map $\Xi_\rho: \mathbb{T} \to \Gamma_\rho$, given by
\begin{equation} \label{xirho}
\Xi_\rho(\tau) = \rho(\tau)\mn(\tau), \qquad \tau \in \mathbb{R},
\end{equation}
is a ${\rm C}^1$-diffeomorphism.

In order to  tackle problem~\eqref{PB-} analytically, we assume that, at each time instant $t>0$, the evolving boundary $\Gamma(t)$ in ~\eqref{PB-} takes the form $\Gamma(t) = \Gamma_{\rho(t)}$. In this geometric setting, the Hele-Shaw problem~\eqref{PB-}  can be written as
\begin{subequations}\label{PB}
\begin{equation}\label{PB1}
\left.
\arraycolsep=1.4pt
\begin{array}{rcllll}
\Delta u(t)&=&0&\quad\text{in $\Omega_{\rho(t)}$,}\\[1ex]
u(t)&=& \kappa_{\Gamma_{\rho(t)}}&\quad\text{on $\Gamma_{\rho(t)}$,}\\[1ex]
V(t)&=&-\p_{{\rm n}_\rho(t)}  u(t)&\quad\text{on $\Gamma_{\rho(t)}$}
\end{array}
\right\}
\end{equation}
for $t>0$, where ${\rm n}_\rho(t) := n_{\Gamma_{\rho(t)}}$. Assuming that $\p\0_0 = \Gamma_{\rho_0}$ for a positive function $\rho_0$, the initial condition~\eqref{IC-} becomes
\begin{equation}\label{IC}
\arraycolsep=1.4pt
\begin{array}{rcllll}
\rho(0)&=&\rho_0.
\end{array}
\end{equation}
\end{subequations}
The unknowns of problem~\eqref{PB} are the function $\rho$ and the pressure $u$.
 However, since at each time instant $t > 0$ the pressure $u(t)$ is uniquely determined by the geometry $\rho(t)$  we will henceforth refer to $\rho$ alone as the solution to~\eqref{PB}. In fact, we formulate in Section~\ref{SEC:2} and Section~\ref{SEC:3}  the system~\eqref{PB} as a quasilinear parabolic problem  for~$\rho$ of the form
\begin{equation}\label{intrQPP}
\cfrac{{\rm d}\rho}{{\rm d}t}(t) = \Phi(\rho(t))[\rho(t)], \quad t > 0, \qquad \rho(0) = \rho_0,
\end{equation}
where $\Phi : \cV_r \to \kH(H^{r+1}(\bT), H^{r-2}(\bT))$ is a smooth mapping  for each~$r\in (3/2, 2]$ and 
\begin{equation}\label{cvr}
 \cV_r:=\{\rho\in H^{ r}(\bT)\,:\,\rho>0\}, \qquad r>3/2.
\end{equation}
That is, $\Phi(\rho)[\cdot]$ is the generator of an analytic semigroup on the Bessel potential space $H^{r-2}(\bT)$ with domain $H^{r+1}(\bT)$ for each $\rho\in  \cV_r $.
This feature together with  the quasilinear parabolic theory from \cite{Am93,MW20} enables us to  prove that the Hele-Shaw problem \eqref{PB} is locally well-posed in~$\cV_{\bar r}$ for any~$\bar r\in (3/2, 2)$ as stated in the following result:

\begin{thm}\label{MT1}
Let $\bar r\in (3/2,2)$ and chose an arbitrary $r\in (3/2,\bar r)$.
Then, given  $\rho_0\in \cV_{\bar r}$, there exists a unique maximal classical solution $\rho=\rho(\cdot;\rho_0) $ to \eqref{PB}
such that 
\begin{equation}\label{prop1}
\rho\in {\rm C}([0,T^+),\cV_{\bar r})\cap {\rm C}((0,T^+), H^{r+1}(\bT)) \cap {\rm C}^1((0,T^+), H^{r-2}(\bT))
\end{equation}
and, for some $\eta\in(0,(\bar r-r)/3]$,
\begin{equation}\label{prop1'}
\rho\in{\rm C}^{\eta}([0,T^+), H^r(\bT)),
\end{equation}
with  
\begin{equation}\label{prop2}
\rho(t)\in H^{r+2}(\bT)\quad\text{and}\quad {u(t)\in{\rm C}^2(\0_{\rho(t)})\cap {\rm C}^1(\ov{\0_{\rho(t)}})}\qquad\text{for $t\in(0,T^+)$,}
\end{equation}
where $T^+=T^+(\rho_0)>0$ is the maximal existence time of the solution.

Additionally, $[(t,\rho_0)\mapsto\rho(t; \rho_0)]$ defines a semiflow on $\cV_{\bar r}$ that is smooth in
\begin{equation}\label{openset}
\{(t,\rho_0)\,:\, \text{$\rho_0\in\cV_{\bar r}$ and $0<t<T^+(\rho_0)$}\}
\end{equation}
 and, moreover,   
 \begin{equation}\label{prop3}
 [(t,\tau)\mapsto\rho(t)(\tau)]\in {\rm C}^\infty((0,T^+)\times\R).
 \end{equation}
\end{thm}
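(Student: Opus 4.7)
The plan is to deduce Theorem~\ref{MT1} from the quasilinear parabolic reformulation~\eqref{intrQPP} by invoking the abstract theory for quasilinear parabolic problems on continuous interpolation scales developed in~\cite{Am93, MW20}. The two structural inputs required, which will be supplied in Sections~\ref{SEC:2} and~\ref{SEC:3}, are: first, smoothness of the right-hand side $\Phi : \cV_r \to \kH(H^{r+1}(\bT), H^{r-2}(\bT))$ for every $r \in (3/2, 2]$; and second, that $\Phi(\rho)$ generates an analytic semigroup on $H^{r-2}(\bT)$ with dense domain $H^{r+1}(\bT)$ for each $\rho \in \cV_r$. Granted these, the proof becomes essentially structural.

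Fix $\bar r \in (3/2, 2)$ and $r \in (3/2, \bar r)$, and set $E_0 := H^{r-2}(\bT)$, $E_1 := H^{r+1}(\bT)$. Complex interpolation yields $[E_0, E_1]_\theta = H^{r-2+3\theta}(\bT)$, so that $H^r(\bT)$ and $H^{\bar r}(\bT)$ correspond to $\theta = 2/3$ and $\theta_0 := (\bar r - r + 2)/3 \in (2/3, 1)$, respectively; in particular, $\cV_{\bar r}$ is open in the subcritical intermediate space $E_{\theta_0}$, strictly between $E_{2/3}$ and $E_1$. Applying the local well-posedness result of~\cite{MW20} to~\eqref{intrQPP} then produces a unique maximal classical solution $\rho$ satisfying~\eqref{prop1} and the H\"older continuity~\eqref{prop1'}, together with the semiflow property and the smoothness on the open set~\eqref{openset}, the latter as a direct consequence of the smoothness of $\Phi$ and of smooth dependence on initial data in the abstract framework.

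The remaining claims---the higher regularity $\rho(t) \in H^{r+2}(\bT)$ in~\eqref{prop2}, the $\mathrm{C}^2(\Omega_{\rho(t)}) \cap \mathrm{C}^1(\overline{\Omega_{\rho(t)}})$ regularity of $u(t)$, and the joint $\mathrm{C}^\infty$-smoothness~\eqref{prop3}---will follow from a parameter-trick bootstrap. Exploiting the rotational invariance of the Hele-Shaw system together with a time rescaling, the family $\rho_{\mu,\lambda}(t, \tau) := \rho(\mu t, \tau + \lambda)$ solves a problem of the same quasilinear form as~\eqref{intrQPP} for $(\mu, \lambda)$ close to $(1, 0)$; uniqueness of the flow and the smooth dependence on the data already established imply that $(\mu, \lambda) \mapsto \rho_{\mu, \lambda}(t_0)$ is smooth into $H^{r+1}(\bT)$ for each $t_0 \in (0, T^+)$, after restarting the flow at a small positive time so that the translation map is sufficiently regular. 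Repeated differentiation in $\lambda$ yields tangential derivatives of $\rho(t_0)$ of all orders, and differentiation in $\mu$ combined with the equation~\eqref{intrQPP} yields the corresponding time smoothness; together these give~\eqref{prop3} and, in particular, the $H^{r+2}$-regularity in~\eqref{prop2}. With $\rho(t)$ smooth in $\tau$, the pressure $u(t)$ solves an elliptic Dirichlet problem on a smooth domain with smooth boundary data, and classical elliptic regularity yields the $\mathrm{C}^2$/$\mathrm{C}^1$-regularity in~\eqref{prop2}.

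The main obstacle is not in this concluding structural argument but in securing the two inputs invoked at the beginning. Establishing the generator property of $\Phi(\rho)$ on the scale $(H^{r-2}(\bT), H^{r+1}(\bT))$---uniformly on bounded subsets of $\cV_r$, with suitable resolvent estimates---is where the double-layer potential representation, its mapping properties, and the Rellich identities of Lemma~\ref{L:REl} all intervene. This is the technical core of the paper; with it in hand, Theorem~\ref{MT1} follows by assembling standard ingredients as sketched above.
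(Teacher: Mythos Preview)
Your overall architecture matches the paper: reduce~\eqref{PB} to the quasilinear problem~\eqref{QPP}, invoke the abstract theory from~\cite{Am93,MW20} on the scale $E_0=H^{r-2}(\bT)$, $E_1=H^{r+1}(\bT)$ to obtain~\eqref{prop1},~\eqref{prop1'} and the smooth semiflow, and then run a parameter trick for the smoothing~\eqref{prop3}. You are also right that the substantive work lies in Sections~\ref{SEC:2}--\ref{SEC:3}.

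There is, however, a genuine gap in your bootstrap. With the scaling $\rho_{\mu,\lambda}(t,\tau)=\rho(\mu t,\tau+\lambda)$ the initial datum is $\rho_0(\cdot+\lambda)$, and the map $\lambda\mapsto\rho_0(\cdot+\lambda)$ is only continuous, not $C^1$, into $H^{\bar r}(\bT)$ for generic $\rho_0\in H^{\bar r}(\bT)$. Restarting at $t_1>0$ does not cure this: $\rho(t_1)\in H^{r+1}(\bT)$ gives $\lambda\mapsto\rho(t_1)(\cdot+\lambda)\in C^1(\R,H^{r}(\bT))$, but since $r<\bar r$ this is still not $C^1$ into the phase space $H^{\bar r}(\bT)$, so you cannot compose with the smooth solution map. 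Iterating the restart caps out at $H^3(\bT)$ (the largest $E_1$ available, for $r=2$) and never reaches $C^\infty$. Thus ``repeated differentiation in $\lambda$'' is circular: each $\lambda$-derivative you want to take requires the very regularity you are trying to prove.

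The paper avoids this by using the \emph{time-dependent} shift $\rho_\lambda(t)(\tau)=\rho(\lambda_1 t)(\tau+\lambda_2 t)$. Here $\rho_\lambda(0)=\rho_0$ is independent of $\lambda$, and the parameters enter the evolution as $\partial_t\rho_\lambda=\lambda_1\Phi(\rho_\lambda)[\rho_\lambda]+\lambda_2\rho_\lambda'$; the extra $\lambda_2\partial_\tau$ is a lower-order perturbation, so the extended system $u=(\lambda,\rho_\lambda)$ is again quasilinear parabolic. Smooth dependence on the \emph{extended} initial datum $(\lambda,\rho_0)$ then gives $\lambda\mapsto\rho_\lambda(t_0)\in H^{r+1}(\bT)$ smooth, and composing with point evaluation and a linear change of variables yields $(t,\tau)\mapsto\rho(t)(\tau)\in C^\infty$. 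A further technical wrinkle you omit is that this extended problem must be set up at the $r=2$ level (so that $\rho_\lambda'$ is absorbed as lower order), which forces the paper's three-step argument: construct a more regular solution for $\rho_0\in\cV_{r+1}$, show it coincides with the original on the full interval $(0,T^+)$, and only then apply the trick. Without this matching step you would obtain smoothness only on a possibly smaller interval.
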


We add the following observations derived from the parabolic smoothing property \eqref{prop3}:

\begin{rem}\label{Rem:1}\,
\begin{itemize}
\item[(a)] According to \eqref{prop3}, solutions to \eqref{PB} become instantaneously smooth, even though the curvature of the boundary of the initial geometry is not a function, but merely a distribution.

 \item[(b)] It follows from \eqref{prop3}  that the maximal existence time $T^+(\rho_0)$ of the maximal solution  corresponding to  $\rho_0\in \cV_{\bar r}$  is independent of the choice of $r\in(3/2,\bar r)$.
\end{itemize}
\end{rem}

Theorem~\ref{MT1} reveals  the strength of the potential-theoretic approach:

 \begin{rem}\label{Rem:2x}
In  Theorem~\ref{MT1} we may choose $\bar r$ arbitrarily  close to the critical threshold $3/2$ in the Bessel potential scale.
 Indeed,  as noted also in the context of the Muskat problem and the Mullins--Sekerka flow \cite{Ngu20, EMM24} (that are two-phase analogues of the Hele-Shaw flow), the exponent~$3/2$ is critical
 since $H^{3/2}(\bT)$ is invariant under the natural scaling of the problem.
 We point out that previous local well-posedness results in bounded geometries \cite{ES96c, ES97a},  based on a Hanzawa transformation approach,
 require ${\rm C}^{2+\alpha}$-regularity of the initial geometry  with~${\alpha > 0}$.
\end{rem}

{ 

To prove Theorem~\ref{MT1}, we first show in Proposition~\ref{Prop:1} that the pressure (and  the velocity) in the bulk  is determined by the geometry of the interface via an explicit integral formula.
As in \cite{ES96c, ES97a}, the curvature operator is decomposed into a quasilinear part containing the highest  order derivatives and a nonlinear lower order part. 
The handling of the nonlinear lower order term requires special care due to the lower regularity setting considered herein compared to \cite{ES96c, ES97a}.
 By differentiating this term and treating it as part of a quasilinear structure, the corresponding term in the final formulation~\eqref{intrQPP} of the Hele-Shaw problem can be regarded as lower order.
This analysis is made possible by the potential-theoretic framework, particularly through Lemmas~\ref{L:1} and~\ref{L:2}, 
where we show that the derivative of certain singular integral operators, evaluated at a density function~$\beta$, coincides with the inverse of the adjoint operators applied to~$-\beta'$.
Finally, in Section~\ref{SEC:2}, we reformulate the problem as a quasilinear evolution equation,  that is shown in Section~\ref{SEC:3} to be of parabolic type. 
This allows us to apply abstract parabolic theory from~\cite{Am93, MW20} to establish Theorem~\ref{MT1}.
The arguments in Sections~\ref{SEC:2} and~\ref{SEC:3} rely on technical results developed in Appendices~\ref{SEC:A}--\ref{SEC:B}, 
where we establish mapping properties, commutator estimates, and localization results for a particular class of (singular) integral operators that may be of independent interest.\\

Concerning the long-time behavior of solutions, we point out that the set of equilibrium solutions to \eqref{PB} forms a 3-dimensional manifold consisting exclusively of circles.
Moreover, the flow \eqref{PB} preserves both the area and the center of mass of $\Omega_{\rho_0}$, since Reynolds' transport theorem and Stokes' theorem yield for $t > 0$ that
\begin{align}
\frac{{\rm d}}{{\rm d} t}|\Omega_{\rho(t)}| &= -\int_{\Gamma_{\rho(t)}} \partial_{{\rm n}_{\rho(t)}} u(t)\, |{\rm d}\xi| = 0, \label{lol1}\\
-\frac{{\rm d}}{{\rm d} t}\int_{\Omega_{\rho(t)}} z\,{\rm d}z &=\int_{\Gamma_{\rho(t)}} z\partial_{{\rm n}_{\rho(t)}} u(t)\, |{\rm d}\xi|= \int_{\Gamma_{\rho(t)}} u(t)\, \mn_{\rho(t)}\, |{\rm d}\xi| =\int_{\Gamma_{\rho(t)}} \kappa_{\Gamma_{\rho(t)}} \mn_{\rho(t)}\, |{\rm d}\xi| = 0. \label{lol2}
\end{align}
We establish in Theorem~\ref{MT2} the exponential stability of the unit circle with center of mass located at $(0,0)$, which corresponds to the stationary solution $\rho=1$ to \eqref{PB}.
Since system~\eqref{PB-} is invariant under rotations and translations,  the exponential stability result in Theorem~\ref{MT2}  is actually valid for any circle 
(with arbitrary area and center of  mass) provided the  perturbations in the phase space preserve both the area and the center of mass of the circle.

\begin{thm}\label{MT2}
Let $\bar r\in (3/2,2)$.
Then, given $\xi\in (0,6)$, there exist  constants $\e>0$  and $M\geq 1$ such that for all $\rho_0\in\cV_{\bar r}$ 
 satisfying $\|\rho_0-1\|_{H^{\bar r}}<\e$, 
\begin{equation}\label{inidatacoo}
|\Omega_{\rho_0}|=\pi,  \qquad\text{and}\qquad \int_{\Omega_{\rho_0}} z\,{\rm d}z =(0,0),
\end{equation}
 the maximal solution $\rho=\rho(\cdot;\rho_0)$  to \eqref{PB} is globally  defined and
\begin{equation}\label{conv}
\|\rho(t)-1\|_{H^{{\bar r}}}\leq Me^{-\xi t} \|\rho_0-1\|_{H^{{\bar r}}}\qquad\text{for all $t\in[0,\infty)$.}
\end{equation}
\end{thm}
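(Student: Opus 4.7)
The plan is to apply to the quasilinear reformulation \eqref{intrQPP} the generalized principle of linearized stability introduced earlier in the paper. Since the equilibrium set of \eqref{PB} is the $3$-dimensional manifold of circles, the linearization at any equilibrium must carry a $3$-dimensional kernel, so the classical stability principle does not apply directly; the role of the two conserved quantities entering \eqref{inidatacoo} is precisely to single out a unique target equilibrium and to neutralize this kernel.

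The first step is to linearize $[\rho \mapsto \Phi(\rho)[\rho]]$ at $\rho = 1$. Because the pressure is constant and the velocity vanishes when $\rho = 1$, a perturbation $\rho = 1 + h$ induces a harmonic perturbation $v$ of the pressure in $\bD$ whose boundary trace, obtained by linearizing $u|_{\Gamma_\rho} = \kappa_{\Gamma_\rho}$, equals $v|_\bT = -(1 + \partial_\tau^2) h$. Combined with the Dirichlet-to-Neumann operator of $\bD$, which acts as $|\partial_\tau|$ in Fourier, this yields for the linearization $A_0$
\begin{equation*}
A_0(e^{ik\tau}) = -|k|(k^2-1)\, e^{ik\tau}, \qquad k \in \Z,
\end{equation*}
hence $\sigma(A_0) = \{0\} \cup \{-k(k^2-1) : k \geq 2\}$, with kernel $\ker A_0 = \mathrm{span}\{1,\cos\tau,\sin\tau\}$ and spectral gap exactly $6$, explaining the restriction $\xi \in (0,6)$.

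This kernel coincides with the tangent space at $\rho = 1$ of the equilibrium manifold: varying the radius produces the constant mode, while translating the center by $(a,b)$ yields $a\cos\tau + b\sin\tau$ to first order. Moreover, the linearizations at $\rho = 1$ of the area and center-of-mass functionals act, respectively, as $h \mapsto \int_\bT h\, d\tau$ and $h \mapsto \int_\bT h\, \mn\, d\tau$, and together they form a basis of the dual of $\ker A_0$. Decomposing $\rho - 1 = h_\parallel + h_\perp$ along $\ker A_0$ and its spectral complement, a Taylor expansion shows that the constraints \eqref{inidatacoo} force $\|h_\parallel\|_{H^{\bar r}} \lesssim \|h_\perp\|_{H^{\bar r}}^2$, and by \eqref{lol1}--\eqref{lol2} this relation persists along the flow. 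On the complementary invariant spectral subspace, $\sigma(A_0) \subset \{\re \lambda \leq -6\}$, so the generalized principle of linearized stability yields exponential decay of $h_\perp$ at any rate $\xi < 6$; the quadratic bound transfers the same decay to $h_\parallel$, and the resulting a priori control prevents blow-up and gives global existence, producing \eqref{conv}.

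The main technical obstacle is verifying the hypotheses of the generalized principle in the Bessel potential framework: one must establish that $A_0$, above identified only formally on trigonometric polynomials, generates an analytic semigroup on $H^{r-2}(\bT)$ with domain $H^{r+1}(\bT)$ and has exactly the point spectrum above with no residual spectrum intruding, and that the spectral projection onto $\ker A_0$ is compatible with $\Phi(\rho)$ uniformly for $\rho$ near $1$. Once these ingredients are in place, the decomposition together with the two nonlinear conservation laws is the only Hele-Shaw-specific input, and the remainder of the argument is abstract.
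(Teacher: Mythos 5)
Your computation of the linearized symbol $-|k|(k^2-1)$, the identification of $\ker$ of the linearization with the tangent space of the manifold of circles, and the observation that the constraints \eqref{inidatacoo} force the kernel component of $\rho-1$ to be quadratically small are all correct and consistent with the paper. The genuine gap is the pivotal step ``on the complementary invariant spectral subspace \ldots the generalized principle of linearized stability yields exponential decay of $h_\perp$''. First, the spectral complement of the kernel is \emph{not} invariant under the nonlinear flow, so there is no decoupled equation for $h_\perp$ to which a stability principle can be applied off the shelf; the evolution of $h_\perp$ is coupled to $h_\parallel$ through the quasilinear structure, and controlling this coupling is exactly the hard part. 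Second, the abstract result available in this paper, Theorem~\ref{MT3}, does not apply to the situation you describe: its hypotheses \eqref{AS3}--\eqref{AS5} require a one-dimensional neutral direction with the matrix structure $A_2(0)=0$ and that the equilibria near the origin form precisely a line of constants in the scalar factor. With a three-dimensional semisimple zero eigenvalue and constraints that are quadratic and cubic in $\rho$, these hypotheses fail; the paper states explicitly (after \eqref{denti}) that the constraints cannot be incorporated into the domain and target of $\Phi$ because they are nonlinear. The device you are missing is the substitution $v=\rho^3$: it converts the two cubic center-of-mass constraints into the linear conditions $\langle v,\cos\rangle=\langle v,\sin\rangle=0$, and the identities \eqref{denti} show that the transformed vector field preserves this subspace, so the $\pm1$ modes can be removed from both domain and codomain (problem \eqref{QPPu}); the remaining constant mode then has exactly the structure \eqref{QPPuu} covered by Theorem~\ref{MT3}, which yields exponential convergence to some nearby constant, and conservation of the area finally identifies that constant, giving \eqref{conv} for any $\xi<6$.

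If you insist on the direct decomposition $\rho-1=h_\parallel+h_\perp$, what you are implicitly invoking is a ``normally stable'' equilibrium-manifold theorem in the spirit of Pr\"uss--Simonett--Zacher, which does allow a kernel equal to the tangent space of a manifold of equilibria, but is formulated in the $L_p$-maximal regularity framework (cf.\ Remark~\ref{R:PS}) rather than in the continuous interpolation space setting in which Theorem~\ref{MT1} and Theorem~\ref{MT3} are proved; no such theorem is proved or cited in a form applicable here, so the decisive decay estimate for $h_\perp$ is asserted without an applicable theorem or a proof. Supplying that estimate (or the reduction via $v=\rho^3$) is the substantive content of the paper's proof, and it is absent from the proposal.
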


 We emphasize that, due to the invariants of the problem; see \eqref{lol1}--\eqref{lol2},   
$0$ is an isolated semi-simple eigenvalue of the operator $\Phi(1)$ with multiplicity $3$, which makes the stability analysis delicate.
In particular,	 the  principle of linearized stability for quasilinear parabolic problems in interpolation spaces~\cite[Theorem 1.3]{MW20} cannot be directly applied.
Instead, we develop and prove in Theorem~\ref{MT3} an abstract generalized principle of linearized stability for quasilinear parabolic problems in interpolation spaces, which plays a crucial role in the proof of  Theorem~\ref{MT2}.
This result fits within the framework of parabolic theory developed in~\cite{Am93} and accommodates cases where the linearized operator includes~$0$ in its spectrum.
We note that more general versions of such generalized stability principles have been established in~\cite{PS16, PSZ9a, PSZ09c} within the context of continuous, H\"older, or $L_p$-maximal regularity.
 
 \begin{rem}\label{R:PS} 
Our main results, Theorem~\ref{MT1} and Theorem~\ref{MT2}, can alternatively be established  in the setting of strong solutions in the Besov space 
\[
B_{2,p}^{s}(\mathbb{T})\qquad\text{for any $p > 3$ and $s\in\Big(\frac52-\frac3p,3-\frac3p\Big]$,}
\]
by using the abstract parabolic theory developed in~\cite{PS16, PSZ9a, PSZ09c} in the context of  $L_p$-maximal regularity. 
This is due to the fact, using the real interpolation functor $ (\cdot,\cdot)_{1-1/p,p}$, there is~$r=r(s)\in(3/2,2]$  such that 
  \[
  B_{2,p}^{s}(\mathbb{T})=(H^{r-2}(\bT),H^{r+1}(\bT))_{1-1/p,p}\hookrightarrow H^r(\bT),
  \]
with~$B_{2,p}^{3/2}(\mathbb{T})$ being a scaling invariant space for  \eqref{PB}  for each $1\leq p\leq\infty$.

Related to our results, we also refer to the recent papers \cite{GGPS25, GGPS23}, 
where the stability of two-dimensional Muskat bubbles in (critical) 
Wiener spaces is investigated through a combination of potential theory and subtle energy estimates.
 \end{rem}
 
 \subsection*{Outline} After setting up the notation in Section~\ref{SEC:NOT}, we show in Section~\ref{SEC:2} that the Hele-Shaw problem~\eqref{PB} can be reformulated as the evolution problem~\eqref{intrQPP} for $\rho$ alone. Section~\ref{SEC:3} is then devoted to the proof of Theorem~\ref{MT1}, while the stability result, Theorem~\ref{MT2}, is  proved in Section~\ref{SEC:4}. Moreover, in Appendix~\ref{SEC:A-1}, we establish the generalized principle of linearized stability stated in Theorem~\ref{MT3}. Finally, Appendices~\ref{SEC:A}--\ref{SEC:B} collect mapping properties, commutator estimates, and localization results for the family of (singular) integral operators introduced in~\eqref{Bnmp}.

\section{Notation and Conventions}  \label{SEC:NOT}

Given  $z=(x,y)\in\R^2$, we set $z^\top=(y,-x) =-iz$
and note that
\[
\mn^\top=-\mt,\qquad \mt^\top=\mn,\qquad \mn'=\mt,\qquad \mt'=-\mn.
\]
Moreover, we compute for the mapping  $\Xi_\rho: \mathbb{T} \to \Gamma_\rho$, $\tau\mapsto  \rho(\tau)\mn(\tau)$, defined in \eqref{xirho} that  
\[
\Xi_\rho' =\rho \mt +\rho' \mn\qquad\text{and}\qquad\Xi_\rho'^\top =\rho \mn -\rho' \mt,
\]
with
\[
\omega_\rho:=|\Xi_\rho'|=\big(\rho^2 +\rho'^2\big)^{1/2}.
\]
Hence, the unit tangent vector $\mt_\rho$ and the unit outward normal  vector $\mn_\rho$  at  $\Gamma_\rho=\p\0_\rho$ are given by
\begin{equation}\label{mtmn}
\mt_\rho\circ\Xi_\rho = \frac{ \Xi_\rho'}{\omega_\rho}\qquad\text{and}\qquad
\mn_\rho\circ\Xi_\rho =  \frac{ \Xi_\rho'^\top }{ \omega_\rho}.
\end{equation}
 
If  $\rho\in{\rm C}^2(\bT)$, then the curvature $\kappa_{\Gamma_{\rho}}$ of $\Gamma_\rho$ can be expressed as
\begin{equation}\label{darcru}
\kappa_{\Gamma_{\rho}}\circ\Xi_\rho=\kappa(\rho)[\rho]+f(\rho),
\end{equation}
with leading order quasilinear part $\kappa(\cdot)[\cdot]$ and lower order nonlinear part $f(\cdot)$ defined by
\begin{align}\label{krhof}
\kappa(\rho)[h]:=-\frac{\rho}{\omega_\rho^3}h''\qquad\text{and}\qquad 
f(\rho):=  \frac{\rho^2+2\rho'^2}{\omega_\rho^3}.
\end{align}

 Given an integrable function $g:\Gamma_\rho\rightarrow \R$ we write
$$
\int_{\Gamma_\rho} g(\xi)\,\vert {\rm d}\xi\vert:=\int_{-\pi}^\pi g\big(\Xi_\rho(s)\big) \vert \Xi_\rho'(s)\vert\, {\rm d}s
$$
for the line integral (and analogously for principal values).

Some of our arguments rely on  the well-known interpolation property
  \begin{align}\label{IP}
[H^{r_0}(\mathbb{\bT}),H^{r_1}(\mathbb{\bT})]_\theta=H^{(1-\theta)r_0+\theta r_1}(\mathbb{\bT}),\qquad\theta\in(0,1),\,  -\infty\leq r_0\leq r_1<\infty,
\end{align}
where $[\cdot,\cdot]_\theta$ denotes the complex interpolation functor of exponent $\theta$ and $H^{r}(\mathbb{\bT})$, $r\in\R$, are the standard, $L_2$-based, Bessel potential spaces.
Furthermore, we will  also use the fact that, given~${r\in(0,1)}$, there is a constant $C>1$ such that, for all $h\in H^{r}(\bT)$,
\begin{align}\label{equiv}
C^{-1}\|h\|_{H^{r}}&\leq \|h\|_2+[h\big]_{H^{r}}\leq C\|h\|_{H^{r}},
\end{align}
 where the seminorm $[\cdot]_{H^{r}}$ is given by 
\begin{equation}\label{equivsem}
[h]_{H^{r}}^2:=\int_{-\pi}^\pi\frac{\|\sfT_sh-h\|_2^2}{|s|^{1+2 r}}\,{\rm d}s
\end{equation}
 with $\sfT_s h:=h(\cdot+s)$ denoting the  right-translation operator.
 We write  $ L_{2,0}(\bT)$ for the space of functions $u\in L_{2}(\bT)$  with $\langle u\rangle=0$, where
\[
\langle u\rangle:=\frac{1}{2\pi}\int_{-\pi}^\pi u(\tau)\, {\rm d}\tau,
\]
and  set $H^r_0(\bT):=H^r(\bT)\cap L_{2,0}(\bT)$ for $r\geq 0$.
 We also denote by $\langle\cdot ,\cdot \rangle $  the canonical duality pairing between $\mathcal{D}'(\bT)$ and $\mathcal{D}(\bT)={\rm C}^\infty(\bT)$.

We also point out  the estimate
		\begin{equation}\label{algebra}
			\|ab\|_{H^s}\leq C\big(\|a\|_\infty\|b\|_{H^s}+\|a\|_{H^s}\|b\|_\infty\big),
		\end{equation}
		which holds for all $a,\, b\in H^s(\bT)$ with  a  constant $C=C(s)>0$, provided $s\in(1/2,1]$.
If  $A$ is an operator and $\varphi$ is a function,  we write $\llbracket \varphi,A\rrbracket  $ for the commutator
\begin{align}\label{comm}
    \llbracket \varphi,A\rrbracket [h]&:=\varphi A[h]-A[\varphi h].
\end{align}

 Given  Banach spaces $E_0$ and $E_1$ with continuous and dense embedding~$E_1\hookrightarrow E_0$, we denote by~$\mathcal{H}(E_1,E_0)$ 
the open subset of the bounded operators $\mathcal{L}(E_1,E_0)$ consisting of generators of strongly continuous, analytic semigroups.

Finally, if  $E_1, \ldots, E_n, E, F$ are Banach spaces, $n \in \mathbb{N}$, 
we write $\mathcal{L}^n(E_1\times \ldots\times E_n, F)$ for the Banach space of bounded $n$-linear maps from $\prod_{i=1}^n E_i$ into $F$. 
When~${E_i = E}$ for all $1 \leq i \leq n$, we use the abbreviation $\mathcal{L}^n(E, F)$ and denote by $\mathcal{L}_{\mathrm{sym}}^n(E, F)$ its subspace of symmetric operators.
If~$\mathcal{U} $ is  open subset of $E$, we write~${\rm C}^{1-}(\mathcal{U}, F)$ for the space of locally Lipschitz continuous mappings from~$\mathcal{U}$ to $F$, 
and~${\rm C}^{\infty}(\mathcal{U}, F)$ is its subspace consisting of smooth mappings.

 \section{An equivalent formulation of \eqref{PB} using singular integrals}\label{SEC:2}
  
The main goal of this section is to show that the evolution problem~\eqref{PB} can be formulated as a quasilinear evolution equation for $\rho$  exclusively, with nonlinearities expressed as singular integrals. 
To achieve this, we first establish in Proposition~\ref{Prop:1} the unique solvability of an elliptic problem related to~\eqref{PB}, see \eqref{FTP}, 
which   implies in particular that the function $\rho$ determines at each time instant the   pressure $u$.
Moreover, we determine $u$ explicitly as an integral  involving a density function $\beta=\beta(\rho)$ that
 solves a linear equation associated with the double layer potential of the Laplace operator corresponding to the graph~$\G_\rho$; see~\eqref{reseq}.
The unique solvability of \eqref{reseq}  is a key  ingredient in the analysis, and is based on mapping properties for the family of (singular) integral operators introduced in \eqref{Bnmp} below
and investigated in Appendix~\ref{SEC:A}.  

Throughout this section, we fix an arbitrary~$r\in(3/2,2]$ and recall the definition  of $\cV_r$ in~\eqref{cvr}, noticing that $H^r(\mathbb{T})\hookrightarrow {\rm C}^{r-1/2}(\mathbb{T})$.

\subsection*{The fixed time problem} We prove that  the pressure $u$ is uniquely determined by the geometry, i.e. that the elliptic Dirichlet problem~\eqref{FTP}  below has a unique solution which is given as an explicit integral over the boundary $\G_\rho$.
\begin{prop}\label{Prop:1}
Given $\rho\in\cV_r$ and $\varphi\in H^r(\bT)$,  
 the Dirichlet problem 
\begin{equation}\label{FTP}
\left.
\arraycolsep=1.4pt
\begin{array}{rcllll}
\Delta u&=&0&\quad\text{in $\Omega_{\rho}$,}\\[1ex]
u&=& \varphi\circ \Xi_\rho^{-1}&\quad\text{on $\Gamma_{\rho}$}
\end{array}
\right\}
\end{equation}
 has a unique solution~${u\in{\rm C}^2(\0_\rho)\cap {\rm C}^1(\ov{\0_\rho})}$ which is given by
\begin{align}\label{solfor}
u(z):=\frac{1}{\pi }\int_{\Gamma_\rho} \frac{(\xi-z)\cdot \mn_\rho(\xi)}{|\xi-z|^2}\beta\circ\Xi_\rho^{-1}(\xi)\, |{\rm d}\xi|
=\frac{1}{\pi }\int_{-\pi}^\pi  \frac{(\Xi_\rho(s)-z)\cdot \Xi_\rho'(s)^\top}{|\Xi_\rho(s)-z|^2}\beta(s)\, {\rm d}s
\end{align}
for $z\in\0_\rho$,
with $\beta\in H^r(\bT)$ denoting the unique solution to the equation
\begin{equation}\label{reseq}
(1 +\bD(\rho))[\beta]=\varphi,
\end{equation}
 where $\bD(\rho)$ is the double layer potential for the Laplace operator associated with the curve~$\Gamma_\rho$; see~\eqref{DLP}.
\end{prop}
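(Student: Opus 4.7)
The plan is to follow the classical layer-potential scheme. The function $u$ defined by \eqref{solfor} is, up to the normalization $1/\pi$, the double layer potential with density $\beta$ associated with $\Gamma_\rho$, and is therefore harmonic in $\Omega_\rho$ by differentiation under the integral sign. The classical interior jump formula for the double layer potential across the $\mathrm{C}^1$-curve $\Gamma_\rho$ identifies the boundary trace $u|_{\Gamma_\rho}\circ\Xi_\rho$ with $(1+\bD(\rho))[\beta]$. Consequently, solving the Dirichlet problem \eqref{FTP} reduces to solving the boundary integral equation \eqref{reseq}, and uniqueness of the solution $u$ in the class ${\rm C}^2(\Omega_\rho)\cap{\rm C}^1(\overline{\Omega_\rho})$ follows at once from the maximum principle applied to the difference of two solutions.

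\textbf{Solvability of \eqref{reseq}.} First I would verify that $\bD(\rho)\in\kL(H^r(\bT))$ for every $\rho\in\cV_r$. After parametrizing the integrand by $\Xi_\rho$ and expanding, the kernel of $\bD(\rho)$ fits into the family $\kB_{n,m,\psi}$ introduced in \eqref{Bnmp}, so the mapping properties from Appendix~\ref{SEC:A} apply. The cancellation $(\Xi_\rho(\tau)-\Xi_\rho(s))\cdot \Xi_\rho'(s)^\top = O(|\tau-s|^2)$ coming from the $\mathrm{C}^1$-regularity of $\Xi_\rho$ makes the kernel pointwise bounded and, in fact, smoothing in the Bessel scale, so $\bD(\rho)$ is compact on $H^r(\bT)$ for $r>3/2$. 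By the Fredholm alternative, invertibility of $1+\bD(\rho)$ on $H^r(\bT)$ is equivalent to its injectivity.

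\textbf{Injectivity: the main obstacle.} Suppose $\beta\in H^r(\bT)$ satisfies $(1+\bD(\rho))[\beta]=0$, and define $u$ by \eqref{solfor}. Then $u$ is harmonic in $\Omega_\rho$ with vanishing boundary trace, hence $u\equiv 0$ in $\Omega_\rho$ by the maximum principle; in particular $\partial_{\mathrm{n}_\rho} u|_{\Gamma_\rho}=0$. Because the normal derivative of a double layer potential is continuous across $\Gamma_\rho$ (the classical Plemelj--Kellogg fact for $\mathrm{C}^1$-boundaries), the exterior double layer $u_{\mathrm{ext}}$, harmonic in $\R^2\setminus\overline{\Omega_\rho}$ with the decay at infinity determined by $\int_\bT \beta\,\omega_\rho\,{\rm d}s$, also satisfies $\partial_{\mathrm{n}_\rho} u_{\mathrm{ext}}|_{\Gamma_\rho}=0$. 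Here the Rellich identity from Lemma~\ref{L:REl}, which in the star-shaped setting provides an explicit positive coupling between $L_2$-norms of tangential and normal traces of harmonic functions, forces the tangential trace of $u_{\mathrm{ext}}|_{\Gamma_\rho}$ to vanish as well. Combined with the decay at infinity this yields $u_{\mathrm{ext}}\equiv 0$, and the exterior jump relation then reads $\beta = u_{\mathrm{int}}|_{\Gamma_\rho}-u_{\mathrm{ext}}|_{\Gamma_\rho}=0$, completing the proof of injectivity. This is the step I expect to require the most care: in the low-regularity setting $r\in(3/2,2]$ the traces and the Rellich identity must be interpreted in the Sobolev-regular sense furnished by the appendices, and it is precisely the star-shapedness of $\Omega_\rho$ that keeps the geometric constants in the identity under explicit control.

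\textbf{Verification and conclusion.} With the unique $\beta\in H^r(\bT)$ solving \eqref{reseq} in hand, $u$ defined by \eqref{solfor} is harmonic in $\Omega_\rho$, and the interior jump relation yields $u\circ\Xi_\rho = (1+\bD(\rho))[\beta]=\varphi$. The continuity of $u$ and $\nabla u$ up to $\Gamma_\rho$, so that $u\in{\rm C}^1(\overline{\Omega_\rho})$ despite $\beta\in H^r(\bT)\hookrightarrow{\rm C}^{r-1/2}(\bT)$ having merely low regularity, would be extracted from the boundary mapping and localization estimates for the gradient of the double layer on $\mathrm{C}^1$-curves developed in Appendices~\ref{SEC:A}--\ref{SEC:B}. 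Uniqueness of the PDE solution in the stated class is then a direct application of the maximum principle, and the proposition follows.
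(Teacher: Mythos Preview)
Your overall scaffolding---double layer potential is harmonic, interior jump gives the trace $(1+\bD(\rho))[\beta]$, uniqueness via the maximum principle---is correct and matches the paper. But two substantive points diverge.

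\textbf{The cancellation and compactness claim is wrong in this regularity range.} You assert $(\Xi_\rho(\tau)-\Xi_\rho(s))\cdot\Xi_\rho'(s)^\top=O(|\tau-s|^2)$ ``from $\mathrm{C}^1$-regularity,'' making the kernel bounded. That estimate requires $\Xi_\rho\in\mathrm{C}^2$. Here $\rho\in H^r(\bT)\hookrightarrow\mathrm{C}^{r-1/2}(\bT)$ with $r\in(3/2,2]$, so $\Xi_\rho'$ is only $\mathrm{C}^{r-3/2}$ and the numerator is $O(|\tau-s|^{r-1/2})$; the kernel is $O(|\tau-s|^{r-5/2})$, genuinely weakly singular. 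Even granting compactness on $L_2$, compactness on $H^r(\bT)$ is not established anywhere in the paper's framework (Lemma~\ref{L:B3} gives only boundedness of the $p=0$ pieces in \eqref{forD} on $H^{r-1}$, no gain), so your Fredholm argument has a real gap. The paper bypasses Fredholm theory entirely: Proposition~\ref{Prop:2} and Proposition~\ref{Prop:3} derive the a~priori estimate $\|(\lambda+\bD(\rho))[\beta]\|\geq C^{-1}\|\beta\|$ for all $\lambda\geq 1$ directly from the Rellich identities of Lemma~\ref{L:REl}, and then invoke the method of continuity. This buys not just $\lambda=1$ but the whole range $\lambda\geq 1$, which is needed later for the generator property.

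\textbf{The $\mathrm{C}^1(\overline{\Omega_\rho})$ regularity is obtained differently.} The paper does not appeal to the appendices for this. Instead it observes
\[
\nabla_z\frac{(\Xi_\rho(s)-z)\cdot\Xi_\rho'(s)^\top}{|\Xi_\rho(s)-z|^2}=\partial_s\frac{(\Xi_\rho(s)-z)^\top}{|\Xi_\rho(s)-z|^2},
\]
integrates by parts to write $\nabla u$ as a Cauchy-type integral with density $\beta'/\Xi_\rho'$, and applies Plemelj's formula a second time. This is short and explicit and yields the trace formula \eqref{trv} for $(\nabla u)\circ\Xi_\rho$, which is used downstream.
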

\begin{proof}
The uniqueness of the solution to \eqref{FTP} follows via the weak  maximum  principle for elliptic problems.
In order to establish the existence claim we define, for  a given density function~$\beta\in H^r(\bT),$ the function $u:=u(\rho)[\beta]:\0_\rho\to\R$  by~\eqref{solfor}.
Clearly, $u\in{\rm C}^\infty(\0_\rho)$, and, since
\[
\Delta_z\frac{(\xi-z)_i}{|\xi-z|^2}=0\qquad\text{in~$\0_\rho$, \quad $i=1,2$.}
\]
it follows that $u$ satisfies~\eqref{FTP}$_1$. 
 Recall from Plemelj's theorem ~\cite{JKL93} that for a H\"older continuous function $\varphi:\Gamma_\rho\rightarrow\R$ and
$$
\phi(z):=\frac{1}{2\pi i}\int_{\Gamma_\rho}\frac{\varphi(\xi)}{\xi-z}\,{\rm d}\xi, \qquad z\in  \mathbb{R}^2\setminus\G_\rho,
$$
one has that  $\phi\in {\rm C}(\overline{\Omega_\rho^\pm})$, where $\Omega^+_\rho:=\Omega_\rho$ and $\Omega^-_\rho:=\mathbb{R}^2\setminus\overline{\Omega^+_\rho}$, and, given $z_0\in\Gamma_\rho$,
\begin{equation}\label{plemelj}
\lim_{\Omega^\pm_\rho\ni z\to z_0}\phi(z)=\pm\frac{\varphi(z_0)}{2}+\frac{1}{2\pi i} \PV\int_{\Gamma_\rho}\frac{\varphi(\xi)}{\xi-z_0}\,{\rm d}\xi,
\end{equation}
 where the symbol $\PV$ stands for the principal value.
Thus, since
$$
\mathrm{Re}\,\left(\frac{1}{i}\frac{1}{\xi-z}\mt_\rho(\xi)\right)=\frac{(\xi-z)\cdot\mn_\rho(\xi)}{\vert\xi-z\vert^2},\qquad  \xi\in \G_\rho,\, z\in\mathbb{R}^2\setminus\G_\rho,
$$
it follows that
\begin{equation}\label{i8}
\mathrm{Re}\,\phi(z)= \frac{1}{ 2\pi}\int_{\Gamma_\rho}\frac{(\xi-z_0)\cdot\mn_\rho(\xi)}{\vert\xi-z_0\vert^2}\varphi(\xi)\,\vert{\rm d}\xi\vert, \qquad  z\in\mathbb{R}^2\setminus\G_\rho.
\end{equation}
Consequently, \eqref{solfor}, \eqref{plemelj}, and \eqref{i8} entail that  $u\in{\rm C}(\overline{\Omega_\rho})$ and
 \[
u\circ\Xi_\rho=(1+\bD(\rho))[\beta]\qquad\text{on $\bT$,}
\]
with $\bD(\rho)$ being defined in~\eqref{DLP}.
Hence, if  $\beta$ solves the equation~\eqref{reseq},
then $u$ solves also~\eqref{FTP}$_2$.
The existence of a solution to~\eqref{reseq} is established in Proposition~\ref{Prop:3}~(ii) below.

Finally, in view of the formula
\begin{equation*}
\nabla_z\frac{(\Xi_\rho(s)-z)\cdot \Xi_\rho'(s)^\top}{|\Xi_\rho(s)-z|^2}=\p_s\frac{(\Xi_\rho(s)-z)^\top}{|\Xi_\rho(s)-z|^2},
\qquad s\in\R,\, z \in\Omega_\rho,
\end{equation*}
integration by parts yields for $z\in\Omega_\rho$
\begin{align*}
\nabla u(z)&=\frac{1}{\pi } \int_{-\pi}^\pi  \p_s\frac{(\Xi_\rho(s)-z)^\top}{|\Xi_\rho(s)-z|^2}\beta(s)\, {\rm d}s
=-\frac{1}{\pi }\int_{-\pi}^\pi  \frac{(\Xi_\rho(s)-z)^\top}{|\Xi_\rho(s)-z|^2}\beta'(s)\, {\rm d}s\\
& =\frac{i}{\pi }\int_{-\pi}^\pi  \frac{\Xi_\rho(s)-z}{|\Xi_\rho(s)-z|^2}\beta'(s)\, {\rm d}s
=-\frac{1}{\pi i}\int_{-\pi}^\pi  \frac{1}{\overline{\Xi_\rho(s)-z}} \beta'(s) \, {\rm d}s\\
&=\overline{\frac{1}{\pi i}\int_{-\pi}^\pi  \frac{1}{\Xi_\rho(s)-z}\frac{\beta'(s)}{\Xi_\rho'(s) }\Xi_\rho'(s) \, {\rm d}s}
=\overline{\frac{1}{\pi i}\int_{\G_\rho}  \frac{1}{\xi-z}\frac{\beta'}{\Xi_\rho' }\circ \Xi_\rho^{-1}(\xi)\, {\rm d}\xi},
\end{align*}
and Plemelj's theorem ensures that~${u\in{\rm C}^1 (\overline{\0_\rho})}$ with
\begin{equation}\label{trv}
(\nabla u)\circ\Xi_\rho(\tau)=\frac{\beta'\Xi_\rho'}{\omega_\rho^2}(\tau)-\frac{1}{\pi }\PV\int_{-\pi}^\pi  \frac{(\Xi_\rho(s)-(\Xi_\rho(\tau))^\top}{|\Xi_\rho(s)-\Xi_\rho(\tau)|^2}\beta'(s)\, {\rm d}s,\qquad \tau\in\R.  
\end{equation}
\end{proof}

 The solvability of~\eqref{reseq} requires some preparation, which is the context of the subsequent considerations.

 \subsection*{A family of (singular) integral operators} 
 To establish the unique solvability of equation~\eqref{reseq}, 
 we introduce a family of (singular) integral operators that play a crucial role in our approach.
Given   $m,\,n,\,p\in\N$  with $0\leq p\leq n+1$   and $\varrho:=(\varrho_1,\ldots,\varrho_m)\in\cV_r^m$ we define the (singular) integral operator 
\begin{equation}\label{Bnmp}
B_{n,m}^p(\varrho )[h,\beta](\tau):=\displaystyle\frac{1}{\pi}\PV\int_{-\pi}^\pi\frac{t_{[s]}^p\prod\limits_{i=1}^n\frac{\delta_{[\tau,s]}h_i}{t_{[s]}}}
{\prod\limits_{i=1}^m\Big[(\varrho_i(\tau)+\varrho_i(\tau-s))^2+\Big(\frac{\delta_{[\tau,s]}\varrho_i}{t_{[s]}}\Big)^2\Big]}
\frac{\beta(\tau-s)}{t_{[s]}}\,{\rm d}s
\end{equation}
for $h=(h_1,\ldots, h_n)\in W^1_\infty(\bT)^n$, $\beta\in L_2(\bT)$, and $\tau\in\R$, where, for any function $u:\R\to\R$, 
 \[
t_{[s]}:=\tan(s/2)\qquad\text{and}\qquad\delta_{[\tau,s]}u:=u(\tau)-u(\tau-s),\qquad s\in(-\pi,\pi),\, \tau\in\R.
\]
The  principal value is needed only when $p=0$, the kernel of~$B_{n,m}^p$ being bounded when $1\leq p\leq n+1$ since $h_i\in W^1_\infty(\bT)$, $1\leq i\leq n$.
When the components of $\varrho$ and $h$ are equal to $\rho\in\cV_r$, we set
\begin{equation}\label{sfB}
 {\sf B}_{n,m}^p(\rho):=B_{n,m}^p(\rho,\ldots,\rho )[\rho,\ldots,\rho,\cdot].
\end{equation} 
We recall that the periodic Hilbert transform~$H$ is given by~$H = 2B^{0}_{0,1}(1)$  and is  a Fourier multiplier with symbol $ (-i \,\mathrm{sign}(k))_{k\in\mathbb{Z}}$. 

As a straightforward consequence of Lemma~\ref{L:B1} and Lemma~\ref{L:B1p}  from the  Appendix~\ref{SEC:A}, the mappings 
\begin{equation}\label{basreg1}
\begin{aligned}
&[\rho\mapsto \sfB_{n,m}^0(\rho)]:\cV_r\to\kL(L_2(\bT)),\\
&[\rho\mapsto \sfB_{n,m}^p(\rho)]:\cV_r\to\kL(L_1(\bT), {\rm C}(\bT)),\qquad 1\leq p\leq n+1,
\end{aligned}
\end{equation}
are locally Lipschitz continuous.
Moreover, as  shown in Lemma~\ref{L:B5},  it holds that 
\begin{equation}\label{basreg2}
\begin{aligned}
&[\rho\mapsto \sfB_{n,m}^0(\rho)]\in{\rm C}^\infty(\cV_r,\kL(H^{r-1}(\bT))),\\
&[\rho\mapsto \sfB_{n,m}^p(\rho)]\in{\rm C}^\infty(\cV_r,\kL(L_2(\bT),H^1(\bT))),\qquad 1\leq p\leq n+1.
\end{aligned}
\end{equation}

\subsection*{The double layer potential and its dual}
Given $\rho\in\cV_r$, we  introduce the  double layer potential~$\bD(\rho)$ for the Laplace operator associated  with the graph~$\G_\rho$   and its $L_2$-adjoint $\bD(\rho)^*$ by
\begin{align}
\bD(\rho)[\beta](\tau)&:=-\frac{1}{\pi}\PV\int_{-\pi}^\pi \frac{(\delta_{[\tau,s]} \Xi_\rho) \cdot \Xi_\rho'(\tau-s)^\top}{|\delta_{[\tau,s]} \Xi_\rho|^2}\beta(\tau-s)\,  {\rm d}s,\label{DLP}\\
\bD(\rho)^*[\beta](\tau)&:=\frac{1}{\pi}\PV\int_{-\pi}^\pi \frac{(\delta_{[\tau,s]} \Xi_\rho)\cdot \Xi_\rho'(\tau)^\top}{|\delta_{[\tau,s]} \Xi_\rho|^2}\beta(\tau-s)\, {\rm d}s\label{AdjDLP}
\end{align}
for $\beta\in L_2(\bT)$ and $\tau\in\R$.
The $L_2$-boundedness of $\bD(\rho)$ and of  $\bD(\rho)^*$ follow immediately from  \eqref{basreg1} in view of the identities 
\begin{equation}\label{forD}
\begin{aligned}
\bD(\rho)[\beta]&=-\sfB^2_{1,1}(\rho)[\rho\beta]- \sfB^0_{1,1}(\rho)[\rho\beta]+2\rho\sfB^1_{0,1}(\rho)[\rho\beta ]+2\rho\sfB^0_{0,1}(\rho)[ \rho' \beta]
\end{aligned}
\end{equation}
and
\begin{equation}\label{forD*}
\begin{aligned}
\bD(\rho)^*[\beta]&=\rho\sfB^2_{1,1}(\rho)[\beta]+\rho \sfB^0_{1,1}(\rho)[\beta]+2\rho\sfB^1_{0,1}(\rho)[\rho\beta ]-2\rho'\sfB^0_{0,1}(\rho)[\rho\beta],
\end{aligned}
\end{equation}
 which may be verified using the tangent half-angle formulas for sine and cosine and expressing the denominator as
$$
|\delta_{[\tau,s]} \Xi_\rho|^2 =\frac{t_{[s]}^2}{1+t_{[s]}^2}\left(
\big(\rho(\tau)+\rho(\tau-s)\big)^2+\Big(\frac{\delta_{[\tau,s]}\rho}{t_{[s]}}\Big)^2\right).
$$

Moreover, these formulas together with  \eqref{basreg2}  show that
\begin{equation}\label{reg:Ds}
\bD,\, \bD(\cdot)^*\in {\rm C}^\infty(\cV_r,\kL(H^{r-1}(\bT))).
\end{equation}

	An important property relating $\bD(\rho)$ and $\bD(\rho)^*$ is provided by the following lemma.
\begin{lemma}\label{L:1}
Given $\rho\in\cV_r$ and $\beta\in H^1(\bT)$, it holds that  $\bD(\rho)[\beta]\in H^1(\bT)$ with
\begin{equation}\label{eqcom}
(\bD(\rho)[\beta])'=-\bD(\rho)^*[\beta'].
\end{equation}
Moreover, we have
\begin{equation}\label{reg:Dr}
\bD\in {\rm C}^\infty(\cV_r,\kL(H^{r}(\bT))).
\end{equation}
\end{lemma}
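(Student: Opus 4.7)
The plan is to establish the identity \eqref{eqcom} first---from which the $H^1$-regularity of $\bD(\rho)[\beta]$ is immediate---and then to combine it with \eqref{reg:Ds} to obtain \eqref{reg:Dr}. I prove \eqref{eqcom} for smooth $\rho$ and $\beta$ by leveraging the machinery already assembled in the proof of Proposition~\ref{Prop:1}, and then extend to $\beta\in H^1(\bT)$ by density.

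Concretely, for smooth $\beta$ let $u := u(\rho)[\beta]$ be defined by \eqref{solfor}. As shown in the proof of Proposition~\ref{Prop:1}, $u\in{\rm C}^1(\ov{\Omega_\rho})$ with trace $u\circ\Xi_\rho = (1+\bD(\rho))[\beta]$ on $\bT$ and with $(\nabla u)\circ\Xi_\rho$ given by \eqref{trv}. The chain rule therefore yields $(u\circ\Xi_\rho)' = \beta' + (\bD(\rho)[\beta])'$, which equals $(\nabla u)\circ\Xi_\rho\cdot\Xi_\rho'$. Dotting \eqref{trv} with $\Xi_\rho'$, using $\Xi_\rho'\cdot\Xi_\rho' = \omega_\rho^2$ and the vector identity $a^\top\cdot b = -a\cdot b^\top$ to flip the tangent onto the normal inside the principal-value term, and then changing variables $s\mapsto \tau - s$ to match the kernel of $\bD(\rho)^*$, one finds $(\nabla u)\circ\Xi_\rho\cdot\Xi_\rho' = \beta' - \bD(\rho)^*[\beta']$. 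Comparison gives \eqref{eqcom} in the smooth case. For $\beta\in H^1(\bT)$, I pick $\beta_n\in{\rm C}^\infty(\bT)$ with $\beta_n\to\beta$ in $H^1(\bT)$; the decompositions \eqref{forD}--\eqref{forD*} combined with \eqref{basreg1} ensure that $\bD(\rho),\bD(\rho)^*\in\kL(L_2(\bT))$, so $\bD(\rho)[\beta_n]\to\bD(\rho)[\beta]$ and $\bD(\rho)^*[\beta_n']\to\bD(\rho)^*[\beta']$ in $L_2(\bT)$. Passing to the distributional limit transfers \eqref{eqcom} to $\beta$, and since the right-hand side lies in $L_2(\bT)$ the distributional derivative of $\bD(\rho)[\beta]$ is an $L_2(\bT)$-function, hence $\bD(\rho)[\beta]\in H^1(\bT)$.

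For \eqref{reg:Dr} I exploit the equivalence $\|u\|_{H^r}\sim\|u\|_{H^{r-1}} + \|u'\|_{H^{r-1}}$ for $r\in(3/2,2]$: for $\beta\in H^r(\bT)$ one has $\beta'\in H^{r-1}(\bT)$, and \eqref{eqcom} together with \eqref{reg:Ds} then gives $(\bD(\rho)[\beta])'\in H^{r-1}(\bT)$, so $\bD(\rho)[\beta]\in H^r(\bT)$ with controlled norm. For smooth dependence on $\rho$, I consider the auxiliary map $\wt\bD:\cV_r\to\kL(H^r(\bT), H^{r-1}(\bT)\times H^{r-1}(\bT))$ defined by $\wt\bD(\rho)[\beta]:=(\bD(\rho)[\beta], -\bD(\rho)^*[\beta'])$, which is smooth by \eqref{reg:Ds}; by \eqref{eqcom} its image lies in the closed subspace $\{(u,u'):u\in H^r(\bT)\}$, canonically isomorphic to $H^r(\bT)$ via the first projection (thanks to the above norm equivalence), and composing with this isomorphism yields $\bD\in{\rm C}^\infty(\cV_r,\kL(H^r(\bT)))$. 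The hard part is \eqref{eqcom} itself: the route via Proposition~\ref{Prop:1} only works thanks to the already-verified applicability of Plemelj's theorem there, while a direct differentiation under the principal value would instead demand a careful verification that the $\varepsilon^{-1}$-divergent boundary contributions produced by the subsequent integration by parts cancel under the imaginary part; the density step and the graph-subspace argument for smooth $\rho$-dependence are then routine.
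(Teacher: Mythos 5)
Your proof is correct, but it reaches the key identity \eqref{eqcom} by a genuinely different route than the paper. The paper works directly on the singular integral: using the representations \eqref{forD}--\eqref{forD*} together with Lemma~\ref{L:B3} (formula \eqref{forder}) and Lemma~\ref{L:B3p} (formula \eqref{derH1}) to justify differentiating under the principal value, it observes that $\p_\tau$ of the kernel equals $\p_s$ of a related bounded kernel and integrates by parts in $s$; the reduction to smooth data and the extension to $\beta\in H^1(\bT)$ rest, as in your argument, on the $L_2$-bounds from \eqref{basreg1}, \eqref{forD}, \eqref{forD*}. You instead recycle the potential-theoretic output of the proof of Proposition~\ref{Prop:1}: for smooth $\beta$ the potential $u=u(\rho)[\beta]$ from \eqref{solfor} is ${\rm C}^1$ up to the boundary, its trace is $(1+\bD(\rho))[\beta]$, and $(\nabla u)\circ\Xi_\rho$ is given by \eqref{trv}; differentiating the trace tangentially and comparing with $(\nabla u)\circ\Xi_\rho\cdot\Xi_\rho'$ (your flip $a^\top\cdot b=-a\cdot b^\top$ and the substitution $s\mapsto\tau-s$ check out) yields \eqref{eqcom} without any fresh kernel computation. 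This buys a shorter, structural argument, at the price of (a) the chain rule for $u\circ\Xi_\rho$ with $u\in{\rm C}^1(\ov{\Omega_\rho})$, which you should at least justify (e.g.\ by approximating along $(1-\e)\Xi_\rho$ using star-shapedness, or by Whitney extension), and (b) reliance on intermediate facts from the \emph{proof}, not the statement, of Proposition~\ref{Prop:1}; note there is no circularity, since those Plemelj-based boundary formulas are derived for an arbitrary H\"older density and do not use the solvability of \eqref{reseq} (Proposition~\ref{Prop:3}), which in the paper comes later and itself uses Lemma~\ref{L:1}. Your treatment of \eqref{reg:Dr} --- the equivalent norm $\|\cdot\|_{H^{r-1}}+\|(\cdot)'\|_{H^{r-1}}$ on $H^r(\bT)$ plus the closed graph-subspace argument --- is essentially a spelled-out version of the paper's one-line ``combine \eqref{reg:Ds} and \eqref{eqcom}''.
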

\begin{proof}
In view of \eqref{basreg1}, \eqref{forD}, and \eqref{forD*} it suffices to establish \eqref{eqcom} for $\rho,\,\beta\in{\rm C}^\infty(\bT)$ with~${\rho>0}$.
In this case,  Lemma~\ref{L:B3} (in particular \eqref{forder}) and Lemma~\ref{L:B3p} (in particular~\eqref{derH1})
 enable us to exchange differentiation and integration when differentiating the function~$\sfB_{n,m}^p(\rho)[\beta]\in H^1(\bT)$ and, together 
with \eqref{forD}, we deduce for $\tau\in\R$ that
\begin{align*}
\big(\bD(\rho)[\beta]\big)'(\tau)&
=\bD(\rho)[\beta'](\tau)-\frac{1}{\pi}\PV\int_{-\pi}^\pi  \p_\tau\bigg[\frac{(\delta_{[\tau,s]} \Xi_\rho) \cdot \Xi_\rho'(\tau-s)^\top}{|\delta_{[\tau,s]} \Xi_\rho|^2}\bigg]\beta(\tau-s)\,{\rm d}s.
\end{align*}
Since for $0\neq s\in(-\pi,\pi)$ and $\tau\in\R$ we have
\[
\p_\tau\frac{(\delta_{[\tau,s]} \Xi_\rho) \cdot \Xi_\rho'(\tau-s)^\top}{|\delta_{[\tau,s]} \Xi_\rho|^2}
 =\p_s\frac{(\delta_{[\tau,s]} \Xi_\rho) \cdot (\delta_{[\tau,s]} \Xi_\rho')^\top}{|\delta_{[\tau,s]} \Xi_\rho|^2},
\]
 integration  by parts leads us to \eqref{eqcom}.
 The remaining  mapping property~\eqref{reg:Dr} now follows by combining \eqref{reg:Ds} and \eqref{eqcom}. 
\end{proof}

\subsection*{Two further singular integral operators}
We  define two additional singular integral operators that are used in the analysis. 
Specifically, given $\rho\in\cV_r$, we set
\begin{align}
\bB(\rho)[\beta](\tau)&:=-\frac{1}{\pi}\PV\int_{-\pi}^\pi \frac{(\delta_{[\tau,s]} \Xi_\rho) \cdot \Xi_\rho'(\tau-s)}{|\delta_{[\tau,s]} \Xi_\rho|^2}\beta(\tau-s)\, {\rm d}s,\label{B}\\
\bB(\rho)^*[\beta](\tau)&:=\frac{1}{\pi}\PV\int_{-\pi}^\pi \frac{(\delta_{[\tau,s]} \Xi_\rho) \cdot \Xi_\rho'(\tau)}{|\delta_{[\tau,s]} \Xi_\rho|^2}\beta(\tau-s)\,  {\rm d}s,\label{AdjB}
\end{align}
for $\beta\in L_2(\bT)$ and  $\tau\in\R$, with $\bB(\rho)^*$  being the $L_2$-adjoint of $\bB(\rho)$.
Indeed, both operators belong to $\kL(L_2(\bT))$  in view of \eqref{basreg1}, since  
\begin{equation}\label{forB}
\bB(\rho)[\beta]=-\sfB^2_{1,1}(\rho)[\rho'\beta]- \sfB^0_{1,1}(\rho)[\rho'\beta]-2\rho\sfB^0_{0,1}(\rho)[\rho\beta ]+2\rho\sfB^1_{0,1}(\rho)[\rho'\beta]
\end{equation}
and
\begin{equation}\label{forB*}
\bB(\rho)^*[\beta]=\rho' \sfB^2_{1,1}(\rho)[\beta]+\rho'\sfB^0_{1,1}(\rho)[\beta]+2\rho\sfB^0_{0,1}(\rho)[\rho\beta ]+2\rho'\sf B^1_{0,1}(\rho)[\rho\beta].
\end{equation}
Moreover, recalling~\eqref{basreg2}, we infer from~\eqref{forB}-\eqref{forB*}  that
\begin{equation}\label{reg:Bs}
\bB,\, \bB(\cdot)^*\in {\rm C}^\infty(\cV_r,\kL(H^{r-1}(\bT))).
\end{equation}

We now prove for \(\bB(\rho)\) and \(\bB(\rho)^*\) an analogue of Lemma~\ref{L:1}.
\begin{lemma}\label{L:2}
Given $\rho\in\cV_r$ and $\beta\in H^1(\bT)$, it holds that  $\bB(\rho)[\beta]\in H^1(\bT)$ with
\begin{equation}\label{eqcom2}
(\bB(\rho)[\beta])'=-\bB(\rho)^*[\beta'].
\end{equation}
\end{lemma}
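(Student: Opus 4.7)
The plan is to mirror the proof of Lemma~\ref{L:1}, replacing the $^\top$-kernel of the double layer potential by the tangential kernel at hand. First, combining the representations \eqref{forB}--\eqref{forB*} with the continuity statements in \eqref{basreg1} and \eqref{reg:Bs}, I reduce to the case $\rho,\beta\in{\rm C}^\infty(\bT)$ with $\rho>0$. The general case will follow by approximation: both $[\rho\mapsto\bB(\rho)]$ and $[\rho\mapsto\bB(\rho)^*]$ are locally Lipschitz from $\cV_r$ into $\kL(L_2(\bT))$, so if the identity \eqref{eqcom2} is known in $L_2(\bT)$ for smooth data, then for arbitrary $\rho\in\cV_r$ and $\beta\in H^1(\bT)$ one obtains $(\bB(\rho)[\beta])' = -\bB(\rho)^*[\beta']$ in $\mathcal{D}'(\bT)$, and since the right-hand side lies in $L_2(\bT)$, one concludes $\bB(\rho)[\beta]\in H^1(\bT)$.

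For smooth data, Lemmas~\ref{L:B3} and~\ref{L:B3p} justify differentiating under the principal value in
\[
\bB(\rho)[\beta](\tau) = -\frac{1}{\pi}\PV\int_{-\pi}^\pi K(\tau,s)\,\beta(\tau-s)\,{\rm d}s,\qquad K(\tau,s):=\frac{F\cdot\Xi_\rho'(\tau-s)}{|F|^2},
\]
with $F=F(\tau,s):=\delta_{[\tau,s]}\Xi_\rho$, yielding
\[
(\bB(\rho)[\beta])'(\tau) = \bB(\rho)[\beta'](\tau) - \frac{1}{\pi}\PV\int_{-\pi}^\pi \p_\tau K(\tau,s)\,\beta(\tau-s)\,{\rm d}s.
\]
The key algebraic identity I would then establish is
\[
\p_\tau K(\tau,s) = \p_s\frac{F\cdot(\delta_{[\tau,s]}\Xi_\rho')}{|F|^2}.
\]
The cleanest derivation exploits the observation that, since $\p_s F=\Xi_\rho'(\tau-s)$ and hence $F\cdot\p_s F = \tfrac12\p_s|F|^2$, one has $K=\p_s\log|F|$; similarly, $\p_\tau F=\delta_{[\tau,s]}\Xi_\rho'$ gives $\p_\tau\log|F|= F\cdot(\delta_{[\tau,s]}\Xi_\rho')/|F|^2$. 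The identity is then just equality of mixed partials of $\log|F|$, which is valid away from $s=0$.

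With the identity in hand, I integrate by parts in $s$ on $(-\pi,-\varepsilon)\cup(\varepsilon,\pi)$ and then let $\varepsilon\to 0$. The endpoint contributions at $s=\pm\pi$ cancel by $2\pi$-periodicity, and the boundary terms at $s=\pm\varepsilon$ vanish because the transferred kernel $F\cdot\delta_{[\tau,s]}\Xi_\rho'/|F|^2$ remains bounded as $s\to 0$ (numerator and denominator both vanish like $s^2$). Using $\p_s\beta(\tau-s)=-\beta'(\tau-s)$ and then splitting $\delta_{[\tau,s]}\Xi_\rho'=\Xi_\rho'(\tau)-\Xi_\rho'(\tau-s)$ inside the resulting integral, the two pieces are recognized, respectively, as $-\bB(\rho)^*[\beta']$ and $-\bB(\rho)[\beta']$, so that
\[
(\bB(\rho)[\beta])'(\tau) = \bB(\rho)[\beta'](\tau) - \bB(\rho)^*[\beta'](\tau) - \bB(\rho)[\beta'](\tau) = -\bB(\rho)^*[\beta'](\tau).
\]

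The main technical hurdle is the rigorous justification of the interchange of $\p_\tau$ with the principal value and of the integration by parts through the singularity at $s=0$. Both points are precisely what the appendix results quoted above are designed for; beyond that, the argument is essentially algebraic once one notices that the $\bB$-kernel is a pure $s$-derivative of $\log|F|$.
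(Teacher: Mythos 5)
Your proposal is correct and follows essentially the same route as the paper: reduce to smooth $\rho,\beta$ by the Lipschitz continuity of $\bB$ and $\bB(\cdot)^*$ via \eqref{forB}--\eqref{forB*} and \eqref{basreg1}, differentiate under the principal value (justified by Lemmas~\ref{L:B3} and~\ref{L:B3p}), use the identity $\p_\tau\big[(\delta_{[\tau,s]}\Xi_\rho)\cdot\Xi_\rho'(\tau-s)/|\delta_{[\tau,s]}\Xi_\rho|^2\big]=\p_s\big[(\delta_{[\tau,s]}\Xi_\rho)\cdot(\delta_{[\tau,s]}\Xi_\rho')/|\delta_{[\tau,s]}\Xi_\rho|^2\big]$ (your derivation via mixed partials of $\log|\delta_{[\tau,s]}\Xi_\rho|$ is a neat way to see exactly what the paper states by direct computation), and integrate by parts. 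Only a cosmetic remark: the boundary contributions at $s=\pm\varepsilon$ do not individually vanish but cancel in the limit, since the transferred kernel has the same one-sided limits at $s=0$ and $\beta$ is continuous.
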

\begin{proof}
Arguing as in the proof of Lemma~\ref{L:1}, for $\rho,\,\beta\in{\rm C}^\infty(\bT)$ with~${\rho>0}$ and~$\tau\in\R$ we have
\begin{align*}
\big(\bB(\rho)[\beta]\big)'(\tau)&
=\bB(\rho)[\beta'](\tau)-\frac{1}{\pi}\PV\int_{-\pi}^\pi  \p_\tau\bigg[\frac{(\delta_{[\tau,s]} \Xi_\rho) \cdot \Xi_\rho'(\tau-s)}{|\delta_{[\tau,s]} \Xi_\rho|^2}\bigg]\beta(\tau-s)\,{\rm d}s,
\end{align*}
and, for $0\neq s\in(-\pi,\pi)$ and $\tau\in\R$,  we compute
\[
\p_\tau\frac{(\delta_{[\tau,s]} \Xi_\rho) \cdot \Xi_\rho'(\tau-s)}{|\delta_{[\tau,s]} \Xi_\rho|^2}
= \p_s\frac{(\delta_{[\tau,s]} \Xi_\rho) \cdot(\delta_{[\tau,s]} \Xi_\rho')}{|\delta_{[\tau,s]} \Xi_\rho|^2}.
\]
Integration by parts now yields~\eqref{eqcom2}.
\end{proof}

In view of Lemma~\ref{L:1} and Lemma~\ref{L:2} it immediately follows that 
\begin{equation}\label{invdua}
\bB(\rho)^*,\,\bD(\rho)^*\in\kL(L_{2,0}(\bT)).
\end{equation}
Indeed,  given $\beta\in L_{2,0}(\bT)$, there exists $\varphi \in H^1(\bT)$ with $\varphi'=\beta$, and together with~\eqref{eqcom2} we get
\begin{align*}
(2\pi)\langle\bB(\rho)^*[\beta]\rangle=\langle \bB(\rho)^*[\beta]|1\rangle_{L_2} =\langle \bB(\rho)^*[\varphi']|1\rangle_{L_2}=-\langle (\bB(\rho)[\varphi])'|1\rangle_{L_2}=0,
\end{align*}
which proves \eqref{invdua} for $\bB(\rho)^*$ (the corresponding property for $\bD(\rho)^*$ following similarly).

\subsection*{Invertibility of layer potentials in $L_2(\bT)$}
The invertibility of layer potentials is a fundamental issue in potential theory; see~\cite{Ve84},  
where the invertibility of $\lambda+\bD(\rho)$ and $\lambda+\bD(\rho)^*$ is established in an $L_2$-setting for Lipschitz domains and $\lambda=\pm 1$.  
For star-shaped domains in~$\mathbb{R}^2$, we provide herein a short and direct approach, based on the Rellich identities~\eqref{Rel12222},  
 that, on the one hand, allows us to consider a larger set of values for $\lambda$  than in \cite{Ve84}, and, on the other hand, 
 permits us to establish the invertibility of these operators in Sobolev spaces of higher order.

\begin{lemma}[Rellich identities]\label{L:REl}
Given   $\rho  \in \cV_r$ and $\beta\in L_2(\bT)$,  it holds that
\begin{equation}\label{Rel12222}
\begin{aligned}
&\int_{-\pi}^\pi\frac{\rho^2}{\omega_\rho^2}\big(\big|(\pm1-\bD(\rho)^*)[\beta]\big|^2-\big|\bB(\rho)^*[\beta]\big|^2\big)\,{\rm ds}\\[1ex]
&=\int_{-\pi}^\pi\frac{(\rho^2)'(\pm1-\bD(\rho)^*)[\beta]\bB(\rho)^*[\beta]}{\omega_\rho^2}  \,{\rm ds}-4(\pm 1-1)\pi|\langle\beta\rangle|^2.
\end{aligned}
\end{equation}
\end{lemma}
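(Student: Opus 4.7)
The plan is to prove \eqref{Rel12222} by a density argument combined with a Pohozaev-type Rellich identity applied to a single layer potential.

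\emph{Density reduction.} Since by \eqref{reg:Ds} and \eqref{reg:Bs} the operators $\bD(\rho)^*,\bB(\rho)^*\in\kL(L_2(\bT))$ depend continuously on $\rho\in\cV_r$, and since $\omega_\rho\geq\min\rho>0$, both sides of \eqref{Rel12222} are continuous quadratic forms in $\beta\in L_2(\bT)$ depending continuously on $\rho\in\cV_r$. Hence it suffices to establish \eqref{Rel12222} for $\rho,\beta\in{\rm C}^\infty(\bT)$ with $\rho>0$.

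\emph{Single layer and its traces.} For such smooth data, I would introduce the single layer potential
\[
v(z):=-\frac{1}{\pi}\int_{-\pi}^\pi \log|z-\Xi_\rho(s)|\,\beta(s)\, {\rm d}s,\qquad z\in\R^2\setminus\Gamma_\rho.
\]
Then $v$ is harmonic on $\Omega_\rho$ and on $\R^2\setminus\overline{\Omega_\rho}$, continuous across $\Gamma_\rho$, and satisfies the asymptotics $v(z)=-2\langle\beta\rangle\log|z|+O(|z|^{-1})$ together with $\nabla v(z)=-2\langle\beta\rangle\, z/|z|^2+O(|z|^{-2})$ as $|z|\to\infty$. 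Via the classical Plemelj-type jump formula for the normal derivative of a single layer, followed by the substitution $s=\tau-s'$ that converts the resulting principal-value integrals into the operators \eqref{AdjDLP} and \eqref{AdjB}, and computing the (continuous) tangential derivative directly from $(v\circ\Xi_\rho)'/\omega_\rho$, one obtains
\[
\partial_{\mn_\rho}^\pm v\circ\Xi_\rho=\frac{(\pm 1-\bD(\rho)^*)[\beta]}{\omega_\rho},\qquad \partial_{\mt_\rho}v\circ\Xi_\rho=-\frac{\bB(\rho)^*[\beta]}{\omega_\rho}.
\]
A sanity check with the unit disk ($\rho\equiv 1$, $\beta\equiv 1$) --- where one computes $\bD(\rho)^*[1]=1$, $\bB(\rho)^*[1]=0$, and $v\equiv 0$ inside and $v=-2\log|z|$ outside --- confirms both the sign and normalization.

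\emph{Rellich identity, interior and exterior.} Since ${\rm div}\bigl(z|\nabla v|^2-2(z\cdot\nabla v)\nabla v\bigr)\equiv 0$ in $\R^2$ on every harmonic $v$, the divergence theorem yields the Pohozaev--Rellich identity $\int_{\partial\Omega}[(z\cdot\nu)|\nabla v|^2-2(z\cdot\nabla v)(\partial_\nu v)]\,{\rm d}S=0$ for any bounded $C^1$-domain $\Omega\subset\R^2$ on which $v$ is harmonic. Applying this to $\Omega=\Omega_\rho$ produces the $+$-case of \eqref{Rel12222}. Applying it instead to $\Omega=(\R^2\setminus\overline{\Omega_\rho})\cap B_R$, where the outward normal is $-\mn_\rho$ on $\Gamma_\rho$ and $z/R$ on $\partial B_R$, and then letting $R\to\infty$, yields the $-$-case: the asymptotics of $\nabla v$ give $\int_{\partial B_R}[(z\cdot\nu)|\nabla v|^2-2(z\cdot\nabla v)(\partial_\nu v)]\,{\rm d}S\to -8\pi|\langle\beta\rangle|^2$, which is precisely the correction term $-4(\pm 1-1)\pi|\langle\beta\rangle|^2$ evaluated at $\pm=-1$. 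In both cases, decomposing $|\nabla v|^2=(\partial_{\mn_\rho}v)^2+(\partial_{\mt_\rho}v)^2$ and $(z\cdot\nabla v)(\partial_{\mn_\rho}v)=(z\cdot\mn_\rho)(\partial_{\mn_\rho}v)^2+(z\cdot\mt_\rho)(\partial_{\mt_\rho}v)(\partial_{\mn_\rho}v)$ on $\Gamma_\rho$, expressing $z|_{\Gamma_\rho}$ in the $\Xi_\rho$ parametrization via $(z\cdot\mn_\rho)\circ\Xi_\rho=\rho^2/\omega_\rho$ and $(z\cdot\mt_\rho)\circ\Xi_\rho=(\rho^2)'/(2\omega_\rho)$, and using $|{\rm d}\xi|=\omega_\rho\,{\rm d}\tau$, substitution of the trace formulas from the previous step yields \eqref{Rel12222} after elementary algebra. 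The main obstacle is the careful sign-bookkeeping in the trace formulas and in the infinity contribution; once those are pinned down (as verified on the unit disk), the remainder is a direct substitution.
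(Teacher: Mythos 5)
Your proposal is correct and is essentially the paper's own argument in rotated form: the paper's vector field $v$ in \eqref{vau} is exactly $-(\nabla S)^\top$ for your single layer $S$, its trace formula \eqref{v1} yields precisely your normal/tangential boundary values $(\pm1-\bD(\rho)^*)[\beta]/\omega_\rho$ and $-\bB(\rho)^*[\beta]/\omega_\rho$, and its flux identity for $W=z|v|^2-2v(z\cdot v)$ is the Pohozaev--Rellich identity you invoke, including the same large-circle truncation producing the $8\pi|\langle\beta\rangle|^2$ contribution at infinity. Your density reduction, sign bookkeeping (checked on the disk), and the geometric identities $(z\cdot\mn_\rho)\circ\Xi_\rho=\rho^2/\omega_\rho$, $(z\cdot\mt_\rho)\circ\Xi_\rho=(\rho^2)'/(2\omega_\rho)$ all match the paper's computation, so no gap remains.
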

\begin{proof} Recalling~\eqref{basreg1}, \eqref{forD*}, and \eqref{forB*}, it suffices to prove~\eqref{Rel12222} for
 $\rho,\, \beta \in {\rm C}^\infty(\bT)$ 
with~$\rho>0$, that we now fix. 
Similarly as in the proof of Proposition~\ref{Prop:1}, we define~${v:=v(\rho)[\beta]:\R^2\setminus\G_\rho\to\R^2}$  by
\begin{align}\label{vau}
v(z):=-\frac{1}{\pi }\int_{\Gamma_\rho} \frac{(\xi-z)^\top}{|\xi-z|^2}\frac{\beta}{\omega_\rho}\circ\Xi_\rho^{-1}(\xi)\, |{\rm d}\xi|,\qquad z\in\R^2\setminus\G_\rho.
\end{align}
Then, $v\in{\rm C}^\infty(\R^2\setminus\G_\rho)$ satisfies
${\rm div\, } v={\rm rot\, } v=0$ in $\R^2\setminus\G_\rho.$
Let $v^\pm:=v|_{\0_\rho^\pm}$, where  again we set~$\0_\rho^+:=\0_\rho$ and~${\0_\rho^-:=\R^2\setminus\overline{\0_\rho^+}}$.
Plemelj's theorem (see, e.g.~\cite{JKL93}) ensures  that  $v^\pm\in{\rm C} (\overline{\0_\rho^\pm})$  with 
\begin{equation}\label{v1}
v^\pm\circ\Xi_\rho(\tau)=\pm\frac{\beta\Xi_\rho'}{\omega_\rho^2}(\tau)
-\frac{1}{\pi}\PV\int_{-\pi}^\pi \frac{(\Xi_\rho(s)-\Xi_\rho(\tau))^\top}{|\Xi_\rho(s)-\Xi_\rho(\tau)|^2}\beta(s)\,  {\rm d}s,
\qquad \tau\in\bT.
\end{equation}
Let now $W:\R^2\setminus\G_\rho\to\R^2$ be given by
\[
W(z)=z|v|^2(z)-2v(z)z\cdot v(z)
\]
and set $W^\pm:=W|_{\0_\rho^\pm}$.
Then, ${\rm div\,}W^\pm=0$ in $\0^\pm_\rho$. 
Since $W^+\in {\rm C}^\infty(\0_\rho^+)\cap {\rm C}\big(\overline{\0_\rho^+}\big)$, Stokes' theorem  thus yields 
\[
\int_{-\pi}^\pi(W^+\circ\Xi_\rho(\tau))\cdot\Xi_\rho'(\tau)^\top\,{\rm d\tau}=\int_{\G_\rho} W^+\cdot\mn_\rho\,{\rm |d\xi|}=0.
\]
Hence, splitting $v^\pm\vert_{\Gamma_\rho}$ into normal and tangential components and noticing from~\eqref{v1} that
$$
(v^\pm\cdot \mn_\rho)\circ\Xi_\rho=\frac{\bB(\rho)^*[\beta]}{\omega_\rho}, \qquad (v^\pm\cdot \mt_\rho)\circ\Xi_\rho=\frac{(\pm 1-\bD(\rho)^*)[\beta]}{\omega_\rho}
$$
leads to the identity in \eqref{Rel12222} with $+$.

To establish the second identity \eqref{Rel12222} (with $-$), we first apply Lebesgue's dominated convergence theorem to deduce that
\begin{align*}
\ov z v(z)&=\overline{\frac{1}{\pi i}\int_{\G_\rho}  \frac{z}{ \xi-z}\frac{\beta}{\omega_\rho }\circ \Xi_\rho^{-1}(\xi)\, |{\rm d}\xi}|
\underset{|z|\to\infty}\longrightarrow\overline{\frac{1}{\pi i}\int_{\G_\rho} \frac{\beta}{\omega_\rho }\circ \Xi_\rho^{-1}(\xi)\, |{\rm d}\xi}|=\overline{-\frac{1}{\pi i}\int_{-\pi}^\pi \beta\, {\rm ds}},
\end{align*}
hence $\ov z v(z)\to -2i\overline{\langle\beta\rangle}$  as $|z|\to\infty$.
 Since $W^-\in {\rm C}^\infty(\0_\rho^-)\cap {\rm C}\big(\overline{\0_\rho^-}\big)$, we integrate~${\rm div\,}W^-$ over  the annular domain
$\0_\rho^{-,R}=\{z\in\0_\rho^-\,:\, |z|<R\},$ $ R>2,$
and obtain via Stokes' theorem that
\[
\int_{\G_\rho} W^-\cdot\mn_\rho\,{\rm |d\xi|}=\int_{R\bT} W^-(\xi)\cdot\frac{\xi}{|\xi|}\,{\rm |d\xi|}\underset{R\to\infty}\to8\pi|\langle\beta\rangle|^2.
\] 
 This yields the second Rellich identity and completes the proof.
\end{proof}

We are now in a position  to address the invertibility of  $\lambda+\bD(\rho) $ and~$\lambda+\bD(\rho)^*$.
\begin{prop}\label{Prop:2}
 Let $\rho\in \cV_r$.
\begin{itemize}
\item[(i)]  The operator  $\lambda+\bD(\rho)^*\in \kL(L_{2,0}(\bT))$ is invertible for all $\lambda\in\R\setminus(-1,1)$.
\item[(ii)] The operator  $\lambda+\bD(\rho)\in \kL(L_{2}(\bT))$ is invertible for all $\lambda\geq1$.
\end{itemize}
\end{prop}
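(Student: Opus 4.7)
My plan is to establish (i) from a Rellich-based injectivity argument combined with Fredholm theory, and then to deduce (ii) from (i) by duality, exploiting the explicit identity $\bD(\rho)[1]=1$. Throughout set $w:=\rho^2/\omega_\rho^2>0$ and $v:=(\rho^2)'/\omega_\rho^2$.

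For the injectivity step of (i), suppose $\beta\in L_{2,0}(\bT)$ satisfies $\bD(\rho)^{*}[\beta]=-\lambda\beta$. Since $\langle\beta\rangle=0$ kills the boundary term in the $-$ version of~\eqref{Rel12222}, substituting $(1-\bD(\rho)^{*})[\beta]=(1+\lambda)\beta$ and $(-1-\bD(\rho)^{*})[\beta]=(\lambda-1)\beta$ into the two Rellich identities and then subtracting, respectively adding, them yields
\begin{align*}
\int_{-\pi}^\pi v\beta\,\bB(\rho)^{*}[\beta]\,{\rm d}s&=2\lambda\int_{-\pi}^\pi w\beta^2\,{\rm d}s,\\
(\lambda^2-1)\int_{-\pi}^\pi w\beta^2\,{\rm d}s+\int_{-\pi}^\pi w|\bB(\rho)^{*}[\beta]|^2\,{\rm d}s&=0.
\end{align*}
Both terms in the second identity are nonnegative when $|\lambda|\geq 1$, so both must vanish. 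For $|\lambda|>1$ this forces $\beta=0$ directly; at the endpoints $\lambda=\pm 1$ one first deduces $\bB(\rho)^{*}[\beta]=0$ and then, plugging this into the Rellich identity of the appropriate sign ($+$ for $\lambda=1$, $-$ for $\lambda=-1$), obtains $\int w\beta^2\,{\rm d}s=0$, hence $\beta=0$.

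To upgrade injectivity to invertibility and handle (ii), I would appeal to Fredholm theory. The Sobolev embedding $H^r(\bT)\hookrightarrow C^{1,r-3/2}(\bT)$ for $r>3/2$ shows $\Gamma_\rho$ is $C^{1,\alpha}$ with $\alpha>0$, so the classical Fabes--Jodeit--Rivi\`ere theory gives compactness of $\bD(\rho)^{*}$ on $L_2(\bT)$ and hence on the closed invariant subspace $L_{2,0}(\bT)$; then $\lambda+\bD(\rho)^{*}$ is a compact perturbation of $\lambda\,\id$ on $L_{2,0}(\bT)$ and Fredholm of index zero for $\lambda\neq 0$, which together with the injectivity proves~(i). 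For~(ii), I first observe that $\bD(\rho)[1]=1$: taking $\beta\equiv 1$ in~\eqref{solfor}, Cauchy's integral formula gives $u(z)\equiv 2$ on $\Omega_\rho$, and the jump identity $(1+\bD(\rho))[\beta]=u\circ\Xi_\rho$ from the proof of Proposition~\ref{Prop:1} yields the claim. Hence in the orthogonal decomposition $L_2(\bT)=\R\cdot 1\oplus L_{2,0}(\bT)$ the operator $\bD(\rho)$ is upper-triangular with diagonal entries $1$ on constants and $T\colon\beta\mapsto\bD(\rho)[\beta]-\langle\bD(\rho)[\beta]\rangle$ on $L_{2,0}(\bT)$; a short adjoint computation using $\langle\alpha,1\rangle=0$ for $\alpha\in L_{2,0}(\bT)$ identifies $T^{*}=\bD(\rho)^{*}|_{L_{2,0}(\bT)}$, invertible for $\lambda\geq 1$ by~(i), so together with $\lambda+1\geq 2>0$ on constants the triangular form, and hence $\lambda+\bD(\rho)$, is invertible on $L_2(\bT)$.

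I anticipate the compactness of $\bD(\rho)^{*}$ on $L_2(\bT)$ to be the main technical point. Within the paper's framework a natural route is to decompose~\eqref{forD*}: the contributions involving $\sfB_{n,m}^p$ with $p\geq 1$ are $L_2$-compact by the smoothing property~\eqref{basreg2} together with the compact embedding $H^1(\bT)\hookrightarrow L_2(\bT)$, while the remaining $\sfB_{n,m}^0$ pieces are only bounded on $L_2$ and would require a more delicate argument such as approximation by smooth densities. An attractive alternative that bypasses compactness altogether is a method-of-continuity argument along $\rho_t:=(1-t)+t\rho$: at $\rho_0=1$ a direct computation from~\eqref{forD*} gives $\bD(1)^{*}=\langle\cdot\rangle$, so $\lambda+\bD(1)^{*}|_{L_{2,0}}=\lambda\,\id$ is trivially invertible, and a uniform-in-$t$ a priori estimate $\|\beta\|_2\lesssim\|(\lambda+\bD(\rho_t)^{*})[\beta]\|_2$ extractable from the injectivity calculation above would propagate invertibility to $t=1$.
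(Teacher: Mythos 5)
Your proposal is correct, but it takes a genuinely different route from the paper. The paper never invokes compactness or Fredholm theory: it substitutes \eqref{substitution} into the Rellich identities for a \emph{general} $\beta$ (not an eigenfunction), forms the combination $(\lambda-1)\cdot[+]-(\lambda+1)\cdot[-]$, and absorbs the cross terms with Young's inequality to obtain the a priori bounds \eqref{INV} and \eqref{INV222}, uniform in $\lambda$; invertibility then follows from the method of continuity, and part (ii) is proved by the same scheme retaining the $|\langle\beta\rangle|^2$ terms together with \eqref{est2222}, rather than by a reduction to (i). You instead use the Rellich identities only for injectivity (your eigenvalue computation, including the endpoint cases $\lambda=\pm1$, is correct), and you convert injectivity into invertibility via compactness of $\bD(\rho)^*$ on $L_2(\bT)$ and Riesz--Schauder; for (ii) you use $\bD(\rho)[1]=1$ (correct, by the Gauss identity and the jump relation from the proof of Proposition~\ref{Prop:1}) and a block-triangular reduction whose diagonal entry on $L_{2,0}(\bT)$ has adjoint $\bD(\rho)^*|_{L_{2,0}(\bT)}$ --- note that this identification uses \eqref{invdua} (otherwise one only gets $P_0\circ\bD(\rho)^*$ with $P_0$ the orthogonal projection onto $L_{2,0}(\bT)$), which is legitimate since \eqref{invdua} is established before the proposition. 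The compactness input is the one step outside the paper's toolkit: the appendix lemmas give only boundedness of the $p=0$ operators, and, as you yourself observe, the individual $\sfB^0$-pieces in \eqref{forD*} are not compact; the clean argument is on the full kernel of $\bD(\rho)^*$, where the cancellation $(\delta_{[\tau,s]}\Xi_\rho)\cdot\Xi_\rho'(\tau)^\top=O(|s|^{1+\alpha})$ for the ${\rm C}^{1,\alpha}$ curve ($\alpha=r-3/2$) makes the kernel weakly singular, hence compact --- for this regularity the Fabes--Jodeit--Rivi\`ere theorem is not even needed. As to what each approach buys: yours is functional-analytically shorter and cleanly separates injectivity from surjectivity, while the paper's quantitative estimates \eqref{INV}, \eqref{INV222} (uniform in $\lambda$) are reused verbatim in the proof of Proposition~\ref{Prop:3}, so the purely qualitative Fredholm statement delivers less for the sequel (though uniform bounds could be recovered a posteriori from continuity in $\lambda$ and the Neumann series for large $|\lambda|$). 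Finally, the ``alternative'' you sketch at the end --- a uniform estimate propagated by continuity --- is essentially the paper's actual proof, and making it rigorous requires exactly the Young-inequality absorption with a nonzero right-hand side that the paper carries out.
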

\begin{proof} Let   $\rho  \in \cV_r$ be fixed.
Concerning~(i), we prove that there exists a constant~${C\geq1}$ such that for  all $\beta\in L_{2,0}(\bT)$ and $\lambda\in\R\setminus(-1,1)$ we have 
\begin{equation}\label{INV}
C\|(\lambda+\bD(\rho)^*)[\beta]\|_2\geq \|\beta\|_2.
\end{equation}
The claim then follows directly from~\eqref{INV} and the method of continuity; see, e.g., \cite[Proposition~I.1.1.1]{LQPP}.
To  this end, Young's inequality and~\eqref{Rel12222} imply there is a constant~$C \geq 1$ such that for all $\beta\in L_{2,0}(\bT)$
\begin{equation}\label{est1}
\|(\pm1-\bD(\rho)^*)[\beta]\|_2\leq C\|\bB(\rho)^*[\beta]\|_2,
\end{equation}
 and, as a direct consequence of \eqref{est1},
\begin{equation}\label{est2}
\|\beta\|_2\leq C\|\bB(\rho)^*[\beta]\|_2,\qquad \beta\in L_{2,0}(\bT).
\end{equation}
Substituting for $\lambda\in\R\setminus(-1,1)$ 
\begin{equation}\label{substitution}
(\pm1-\bD(\rho)^*)[\beta]=(\lambda\pm1)\beta-(\lambda+\bD(\rho)^*)[\beta]
\end{equation}
in \eqref{Rel12222}, yields 
\begin{align}
&\int_{-\pi}^\pi\frac{\rho^2}{\omega_\rho^2}\big(|(\lambda\pm1)\beta|^2-2(\lambda\pm1)\beta(\lambda+\bD(\rho)^*)[\beta]+\big|(\lambda+\bD(\rho)^*)[\beta]\big|^2-\big|\bB(\rho)^*[\beta]\big|^2\big)\,{\rm ds}\nonumber\\
&=\int_{-\pi}^\pi\frac{(\rho^2)'}{\omega_\rho^2} \big((\lambda\pm1)\beta\bB(\rho)^*[\beta]-(\lambda+\bD(\rho)^*)[\beta]\bB(\rho)^*[\beta]\big) \,{\rm ds}.\label{Rel12trans}
\end{align}
We next multiply the equation~\eqref{Rel12trans} with $+$ by $\lambda-1$ and the equation~\eqref{Rel12trans} with $-$ by~$-(\lambda+1)$ to obtain, after building the sum of the resulting identities,
\begin{equation*} 
\int_{-\pi}^\pi\frac{\rho^2}{\omega_\rho^2}\big((\lambda^2-1)|\beta|^2 -\big|(\lambda+\bD(\rho)^*)[\beta]\big|^2+\big|\bB(\rho)^*[\beta]\big|^2\big)\,{\rm ds}=\int_{-\pi}^\pi\frac{(\rho^2)'}{\omega_\rho^2}  (\lambda+\bD(\rho)^*)[\beta]\bB(\rho)^*[\beta] \,{\rm ds}.
\end{equation*}
Using  again Young's inequality, we find a constant $C\geq 1$ with the property  that for all~${\beta\in L_{2,0}(\bT)}$ and $\lambda\in\R\setminus(-1,1)$ we have
\[
(\lambda^2-1)\|\beta\|_2^2+\|\bB(\rho)^*[\beta]\|_2^2\leq C\|(\lambda+\bD(\rho)^*)[\beta]\|_2^2.
\]
This relation together with \eqref{est2}  immediately implies \eqref{INV},
and the proof of~(i) is complete.\medskip

Similarly, for~(ii) is suffices to  show that   there exists a constant $C\geq1$ such that for  all~${\beta\in L_{2}(\bT)}$
  and~$\lambda\geq1$  it holds
\begin{equation}\label{INV222}
C\|(\lambda+\bD(\rho))[\beta]\|_2\geq \|\beta\|_2.
\end{equation}
Arguing as  above, we find from~\eqref{Rel12222}  a constant $C \geq1$ such that for all $\beta\in L_{2}(\bT)$ we have
\begin{equation*} 
\|(\pm1-\bD(\rho)^*)[\beta]\|_2\leq C(\|\bB(\rho)^*[\beta]\|_2+|\langle \beta\rangle|) 
\end{equation*}
and  
\begin{equation}\label{est2222}
\|\beta\|_2\leq C(\|\bB(\rho)^*[\beta]\|_2+|\langle \beta\rangle|).
\end{equation}
Using the same substitution~\eqref{substitution} in \eqref{Rel12222} yields 
\begin{align}
&\int_{-\pi}^\pi\frac{\rho^2}{\omega_\rho^2}\big(|(\lambda\pm1)\beta|^2-2(\lambda\pm1)\beta(\lambda+\bD(\rho)^*)[\beta]+\big|(\lambda+\bD(\rho)^*)[\beta]\big|^2-\big|\bB(\rho)^*[\beta]\big|^2\big)\,{\rm ds}\nonumber\\
&=\int_{-\pi}^\pi\frac{(\rho^2)'}{\omega_\rho^2} \big((\lambda\pm1)\beta\bB(\rho)^*[\beta]-(\lambda+\bD(\rho)^*)[\beta]\bB(\rho)^*[\beta]\big) \,{\rm ds}-4(\pm 1-1)\pi|\langle\beta\rangle|^2.\label{Rel12trans222}
\end{align}
We next multiply the equation~\eqref{Rel12trans222} with $+$ by $\lambda-1$ and the equation~\eqref{Rel12trans222} with $-$ by~$-(\lambda+1)$ to obtain, after building the sum of the resulting identities,
\begin{equation*}
\begin{aligned}
&\int_{-\pi}^\pi\frac{\rho^2}{\omega_\rho^2}\big((\lambda^2-1)|\beta|^2 -\big|(\lambda+\bD(\rho)^*)[\beta]\big|^2+\big|\bB(\rho)^*[\beta]\big|^2\big)\,{\rm ds}\\
&=\int_{-\pi}^\pi\frac{(\rho^2)'}{\omega_\rho^2}  (\lambda+\bD(\rho)^*)[\beta]\bB(\rho)^*[\beta] \,{\rm ds}-8(\lambda+1)\pi|\langle\beta\rangle|^2.
\end{aligned}
\end{equation*}
Hence,  there is a constant $C\geq 1$ such that for all $\beta\in L_{2}(\bT)$ and $\lambda\in\R\setminus(-1,1)$ we have
\[
 (\lambda+1) |\langle\beta\rangle|^2+(\lambda^2-1)\|\beta\|_2^2+\|\bB(\rho)^*[\beta]\|_2^2\leq C\|(\lambda+\bD(\rho)^*)[\beta]\|_2^2.
\]
Combining the latter relation with \eqref{est2}, we obtain~\eqref{INV222}, which proves (ii).
\end{proof}

\subsection*{Invertibility of layer potentials in Sobolev spaces}
We now address the invertibility of $\lambda+\bD(\rho)$ and~$\lambda+\bD(\rho)^*$ in $H^r(\bT)$ and ${H}_0^{r-1}(\bT)$, respectively, for the same range of $\lambda$ as in Proposition~\ref{Prop:2}
 (recalling that $r\in(3/2,2]$ is arbitrary).

\begin{prop}\label{Prop:3}
 Let $\rho\in \cV_r$.
\begin{itemize}
\item[(i)]  The operator  $\lambda+\bD(\rho)^*\in \kL( H^{r-1}_0(\bT))$ is invertible for all~$\lambda\in\R\setminus(-1,1)$;
\item[(ii)] The operators $\lambda+\bD(\rho)\in \kL(H^{r-1}(\bT))$ and  $\lambda+\bD(\rho)\in \kL(H^r(\bT))$ are invertible for all~$\lambda\geq1$.
\end{itemize}
\end{prop}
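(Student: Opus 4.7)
The plan is to upgrade the $L_2$-level invertibilities of Proposition~\ref{Prop:2} to the Bessel potential scale by exploiting the derivative identity $(\bD(\rho)[B])'=-\bD(\rho)^*[B']$ of Lemma~\ref{L:1}, which acts as an integration-by-parts between $\bD(\rho)$ and $\bD(\rho)^*$ at the cost of an antiderivative. Injectivity on the Sobolev subspaces is immediate in both parts, since $H^{r-1}_0(\bT)\hookrightarrow L_{2,0}(\bT)$ and $H^{r-1}(\bT)\hookrightarrow L_2(\bT)$ and the larger-space injectivity is provided by Proposition~\ref{Prop:2}; it remains to establish surjectivity.

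For Part~(i) in the subcase $\lambda\le-1$, given $f\in H^{r-1}_0(\bT)$, Proposition~\ref{Prop:2}(i) furnishes the unique $\beta\in L_{2,0}(\bT)$ with $(\lambda+\bD(\rho)^*)[\beta]=f$, and I take its zero-mean antiderivative $B\in H^1_0(\bT)$. Since $\bD(\rho)[B]\in H^1(\bT)$ by Lemma~\ref{L:1}, the computation
\[
(\lambda B-\bD(\rho)[B])'=\lambda\beta+\bD(\rho)^*[\beta]=f\in H^{r-1}(\bT)
\]
places $G:=\lambda B-\bD(\rho)[B]$ in $H^1(\bT)$ with derivative in $H^{r-1}(\bT)$, hence $G\in H^r(\bT)$. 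The identity then reads $((-\lambda)+\bD(\rho))[B]=-G$ with $-\lambda\ge 1$, and Part~(ii) applied on $H^r(\bT)$ gives $B\in H^r(\bT)$, so $\beta=B'\in H^{r-1}_0(\bT)$.

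For Part~(ii) on $H^r(\bT)$ I proceed dually. Given $F\in H^r(\bT)$, set $\lambda':=-\lambda\le-1$ and apply Part~(i) with parameter $\lambda'$ to obtain $\tilde\beta\in H^{r-1}_0(\bT)$ solving $(\lambda'+\bD(\rho)^*)[\tilde\beta]=-F'$. The zero-mean antiderivative $\tilde B\in H^r_0(\bT)$ then satisfies, by Lemma~\ref{L:1} and the defining equation for $\tilde\beta$,
\[
((\lambda+\bD(\rho))[\tilde B])'=\lambda\tilde\beta-\bD(\rho)^*[\tilde\beta]=F',
\]
so $(\lambda+\bD(\rho))[\tilde B]-F$ is a constant $c$. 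A direct evaluation from \eqref{solfor} yields $\bD(\rho)[1]=1$ (the winding integral $\int_{\G_\rho}\frac{(\xi-z)\cdot\mn_\rho}{|\xi-z|^2}|\mathrm{d}\xi|=2\pi$ for $z\in\Omega_\rho$), so $(\lambda+\bD(\rho))[1]=\lambda+1\ne 0$, and $B^\ast:=\tilde B-c/(\lambda+1)\in H^r(\bT)$ solves $(\lambda+\bD(\rho))[B^\ast]=F$. Uniqueness of the $L_2$-solution from Proposition~\ref{Prop:2}(ii) then identifies the $L_2$-solution $B$ with $B^\ast$, giving $B\in H^r(\bT)$.

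The two preceding steps establish the equivalence of Part~(i) for $\lambda\le-1$ and Part~(ii) on $H^r(\bT)$, so the chain must be closed by proving one of them independently. \textbf{The main obstacle} is precisely this step: I would close the loop via the method of continuity along the homotopy $\rho_t:=(1-t)+t\rho\in\cV_r$, $t\in[0,1]$, starting from $\rho_0\equiv 1$ where a direct calculation shows $\bD(1)^*[\beta]=\langle\beta\rangle$, so $\bD(1)^*$ vanishes on $L_{2,0}$ and the invertibility of $\lambda+\bD(1)^*=\lambda I$ on $H^{r-1}_0(\bT)$ is trivial for $\lambda\ne 0$. The required uniform a priori estimate $\|\beta\|_{H^{r-1}}\le C\|(\lambda+\bD(\rho_t)^*)[\beta]\|_{H^{r-1}}$ would combine the $L_{2,0}$-bound of Proposition~\ref{Prop:2}(i) with the commutator estimates for $\llbracket\varphi,\bD(\rho)^*\rrbracket$ from Appendix~\ref{SEC:B} and the seminorm characterization~\eqref{equivsem}. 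The $H^{r-1}$ case of Part~(ii), as well as the subcase $\lambda\ge 1$ of Part~(i) (which is not directly covered by the symmetric bootstrap above), would be handled in the same spirit, making crucial use of the interpolation identity~\eqref{IP} to reach intermediate Sobolev indices.
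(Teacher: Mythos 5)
Your bootstrap between the two statements is sound as far as it goes: the antiderivative trick based on Lemma~\ref{L:1}, the identity $\bD(\rho)[1]=1$ (which is indeed correct via the Gauss integral and Plemelj), and the constant adjustment all check out, and they do establish that invertibility of $\lambda+\bD(\rho)^*$ on $H^{r-1}_0(\bT)$ for $\lambda\le-1$ and invertibility of $-\lambda+\bD(\rho)$ on $H^r(\bT)$ are equivalent. But this reduction is circular by construction, and the step you yourself flag as ``the main obstacle'' --- an independent proof of one of the two Sobolev-level invertibilities --- is never carried out. That step is not a routine appendix citation: it is essentially the entire content of the paper's proof. The paper proves the uniform a priori estimate $\|\beta\|_{H^{r-1}}\le C\|(\lambda+\bD(\rho)^*)[\beta]\|_{H^{r-1}}$ for $r\in(3/2,2)$ by writing, via \eqref{equivsem}, the seminorm of $\beta$ in terms of translations, commuting $\sfT_s$ past $\bD(\rho)^*$ at the price of the difference $\bD(\sfT_s\rho)^*-\bD(\rho)^*$, and controlling that difference by $\|\sfT_s\rho-\rho\|_{H^1}\|\beta\|_{H^{r'-1}}$ using \eqref{difference} together with Lemmas~\ref{L:B1}, \ref{L:B1p} and \ref{L:B4}; the lower-order term $\|\beta\|_{H^{r'-1}}$ is then absorbed via interpolation \eqref{IP} and the $L_2$ bound \eqref{INV}. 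The case $r=2$ needs a separate argument through $(\bD(\rho)^*[\beta])'-\bD(\rho)^*[\beta']$. None of this appears in your proposal; your pointer to ``the commutator estimates for $\llbracket\varphi,\bD(\rho)^*\rrbracket$ from Appendix~\ref{SEC:B}'' is also off-target, since Lemma~\ref{L:AB1} concerns commutators with a fixed multiplier $a\in{\rm C}^1(\bT)$, not the translation commutator (equivalently, the Lipschitz dependence $\rho\mapsto\bD(\rho)^*$ from Appendix~\ref{SEC:A}) that the estimate actually requires.

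Two further points would need attention even if the base case were supplied. First, your proposed homotopy $\rho_t=(1-t)+t\rho$ requires the a priori constant to be uniform in $t\in[0,1]$; this is plausible because the appendix estimates are uniform on sets $\cV_{r,M}$, but it must be said and used, whereas the paper avoids the issue by proving the estimate at the fixed $\rho$, uniformly in $\lambda\in\R\setminus(-1,1)$, and running the continuity argument in $\lambda$ (invertibility for large $\lambda$ being free from \eqref{reg:Ds} by a Neumann series). Second, your bootstrap only covers part~(i) for $\lambda\le-1$ and part~(ii) on $H^r(\bT)$; the subcase $\lambda\ge1$ of part~(i) and the $H^{r-1}(\bT)$ statement of part~(ii) are dismissed as ``the same spirit,'' but the latter is obtained in the paper by interpolating between the $L_2$ and $H^r$ invertibilities (the inverses being consistent), and the former again needs the full a priori estimate. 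As it stands, the proposal is an interesting alternative reduction, but the proof of the proposition is not complete.
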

\begin{proof} Let $\rho\in\cV_r$ be fixed.  Concerning~(i), it suffices to prove that there is a constant~$C\geq 1$ such that for all~${\beta\in  H^{r-1}_0(\bT)}$
  and $\lambda\in\R\setminus(-1,1)$ we have 
\begin{equation}\label{INV26i}
C\|(\lambda+\bD(\rho)^*)[\beta]\|_{H^{r-1}}\geq \|\beta\|_{H^{r-1}}.
\end{equation}
To this end we compute, in the particular case when $r\in(3/2,2)$, using \eqref{equivsem} and~\eqref{INV},
\begin{align*}
[\beta]_{H^{r-1}}^2&=\int_{-\pi}^\pi\frac{\|\sfT_s\beta-\beta\|_2^2}{|s|^{1+2 (r-1)}}\,{\rm d}s\leq C\int_{-\pi}^\pi\frac{\|(\lambda+\bD(\rho)^*)[\sfT_s\beta-\beta]\|_2^2}{|s|^{1+2 (r-1)}}\,{\rm d}s\\
&\leq C\int_{-\pi}^\pi\frac{\|\sfT_s\big((\lambda+\bD(\rho)^*)[\beta]\big)-(\lambda+\bD(\rho)^*)[\beta]\|_2^2}{|s|^{1+2 (r-1)}}\,{\rm d}s \\
&\quad+C\int_{-\pi}^\pi\frac{\|\sfT_s\big(\bD(\rho)^*[\beta]\big)-\bD(\rho)^*[\sfT_s\beta]\|_2^2}{|s|^{1+2 (r-1)}}\,{\rm d}s\\
&\leq C[(\lambda+\bD(\rho)^*)[\beta]]_{H^{r-1}}^2 +C\int_{-\pi}^\pi\frac{\|\bD(\sfT_s\rho)^*[\sfT_s\beta]\big)-\bD(\rho)^*[\sfT_s\beta]\|_2^2}{|s|^{1+2 (r-1)}}\,{\rm d}s.
\end{align*}
Let $r'\in(3/2,r)$ be fixed. 
Then, recalling the representation~\eqref{forD*} of $\bD(\rho)^* $ and \eqref{difference}, we deduce from Lemma~\ref{L:B1} (with $r=r'$ therein), Lemma~\ref{L:B1p}, 
and Lemma~\ref{L:B4} (with $r=r'$ therein) 
 that there is a constant~$C>0$ such that for all~${\beta\in  H^{r-1}_0(\bT)}$ and $s\in(-\pi,\pi)$ we have
\begin{equation*} 
 \|\bD(\sfT_s\rho)^*[\beta]-\bD(\rho)^*[\beta]\|_2 \leq C\|\sfT_s\rho-\rho\|_{H^{1}}\|\beta\|_{H^{r'-1}},
\end{equation*}
and together with the previous estimate we get
\[
[\beta]_{H^{r-1}}\leq C\big([(\lambda+\bD(\rho)^*)[\beta]]_{H^{r-1}}+ \|\beta\|_{H^{r'-1}}\big).
\]
  This estimate, together with \eqref{equiv} and  \eqref{INV}, implies  there exists a constant $C\geq1$ such that for all~$\lambda\in\R\setminus(-1,1)$ 
  and~${\beta\in  H^{r-1}_0(\bT)}$  we have 
\begin{equation}\label{zwischen}
\|\beta\|_{H^{r-1}}\leq C\big(\|(\lambda+\bD(\rho)^*)[\beta]\|_{H^{r-1}}+ \|\beta\|_{H^{r'-1}}\big). 
\end{equation}
The desired estimate~\eqref{INV26i} for $r\in(3/2,2)$ follows now from~\eqref{zwischen} and~\eqref{IP} since $r'\in(3/2,r)$.

If $r=2$,  we infer from \eqref{INV} that there exists a constant $C\geq 1$ such that  for all $\lambda\geq 1$ and~$\beta\in H^1(\bT)$ we have
\begin{align*}
C\|(\lambda+\bD(\rho)^*)[\beta]\|_{H^{1}}&\geq \|(\lambda+\bD(\rho)^*)[\beta]\|_{2}+\|((\lambda+\bD(\rho)^*)[\beta])'\|_{2}\\
&\ge C^{-1}\|\beta\|_{2}+\|(\lambda+\bD(\rho)^*)[\beta']\|_{2}-\|(\bD(\rho)^*[\beta])'-\bD(\rho)^*)[\beta']\|_2\\
&\geq C^{-1}(\|\beta\|_{H^1}-\|\beta \|_{H^{r'-1}}),
\end{align*}
where in the last line we used \eqref{forD*}, \eqref{forder}, Lemma~\ref{L:B1}, and Lemma~\ref{L:B4} (with $r=r'$  therein) to estimate
 \[
\|(\bD(\rho)^*[\beta])'-\bD(\rho)^*)[\beta']\|_2\leq C\|\beta\|_{H^{r'-1}},\qquad \beta\in H^1(\bT). 
 \]
 Thus, \eqref{zwischen} holds  also for $r=2$, and the estimate~\eqref{INV26i} follows similarly as for~$r\in(3/2,2)$.

To establish (ii), we note that   $[u\mapsto \|u\|_2+\|u'\|_{H^{r-1}}]$ is an  equivalent norm on $H^r(\bT)$. 
This property together with the estimates  \eqref{INV222}, \eqref{INV26i}, and Lemma~\ref{L:1} implies that 
there exists a constant~$C\geq 1$ such that for all $\lambda\geq 1$ and $\beta\in H^r(\bT)$ we have
\begin{align*}
C\|(\lambda+\bD(\rho))[\beta]\|_{H^{r}}&\geq \|(\lambda+\bD(\rho))[\beta]\|_{2}+\|((\lambda+\bD(\rho))[\beta])'\|_{H^{r-1}}\\
&= \|(\lambda+\bD(\rho))[\beta]\|_{2}+\|(\lambda-\bD(\rho)^*)[\beta']\|_{H^{r-1}}\\
&\geq C^{-1}(\|\beta\|_2+\|\beta'\|_{H^{r-1}})\geq C^{-2}\|\beta\|_{H^r},
\end{align*}
and the invertibility of $\lambda+\bD(\rho)\in \kL(H^r(\bT))$ follows from the method of continuity.
This property together with Proposition~\ref{Prop:2}~(ii) also ensures the invertibility of~${\lambda+\bD(\rho)\in \kL(H^{r-1}(\bT))}$  by interpolation (see~\eqref{IP}). 
\end{proof}

\subsection*{An equivalent formulation of \eqref{PB}} We may now  reformulate the Hele-Shaw problem~\eqref{PB} as an evolution problem for $\rho$, 
with nonlinearities expressed by singular integrals, see \eqref{QPP}.
In doing so, special attention is  required when solving the equation~\eqref{betat},  particularly in identifying the leading-order terms with respect to~$\rho$.

Assume  that $\rho:[0,T)\to\cV_r$ is a  solution to solution~\eqref{PB} enjoying the  regularity properties~\eqref{prop1} and~\eqref{prop2}.
Noticing that the functions introduced in \eqref{krhof} satisfy
 $f(\rho),\, \kappa(\rho)[\rho]\in H^{ r}(\bT)$ for~${\rho\in H^{r+2}(\bT)}$,  we have
 $\kappa_{\Gamma_{\rho(t)}}= \kappa(\rho(t))[\rho(t)]+f(\rho(t))\in H^{ r}(\bT)$ for all~$t\in(0,T)$.
Proposition~\ref{Prop:1} then ensures  that the boundary value problem~\eqref{PB1}$_{1}$-\eqref{PB1}$_2$ has for each $t\in(0,T)$ a unique solution $u(t)$ satisfying~\eqref{prop2}. 
Moreover,  recalling  \eqref{mtmn}, \eqref{trv}, and \eqref{AdjB}  we have
\begin{align}\label{u}
\p_{\mn_{\rho(t)}}u(t)(\Xi_{\rho(t)}(\tau))=\frac{1}{\omega_{\rho(t)}(\tau)} \bB(\rho(t))^*[\beta(t)'](\tau),\qquad\tau\in\R,
\end{align}
where $\beta(t)\in H^{r}(\bT)$ is the unique solution to 
\begin{equation} \label{betat}
(1+\bD(\rho(t)))[\beta(t)]= \kappa(\rho(t))[\rho(t)]+f(\rho(t)).
\end{equation}
We may decompose  $\beta(t)=\beta_1(t)+\beta_2(t)$, where $\beta_i(t)\in H^{r}(\bT)$, $i=1,\, 2$, are defined as the unique solutions to the equations
\begin{equation} \label{betat1}
(1+\bD(\rho(t)))[\beta_1(t)]= \kappa(\rho(t))[\rho(t)] 
\end{equation}
and
\begin{equation} \label{betat2}
(1+\bD(\rho(t)))[\beta_2(t)]= f(\rho(t)).
\end{equation}
Moreover, Lemma~\ref{L:1} and Proposition~\ref{Prop:3} ensure that $\beta_2(t)'\in  H^{ r-1}_0(\bT)$ solves the equation 
\begin{equation} \label{betat2'}
(1-\bD(\rho(t))^* )[\beta_2(t)']= (f(\rho(t)))',
\end{equation}
where, for $\rho\in \cV_{r+1}$,
\[
(f(\rho))'=\frac{\rho^2\rho'\rho''-\rho^3\rho'-4\rho\rho'^3-2\rho'^3\rho'' }{\omega_{\rho}^5}.
\]
Since  the normal velocity is for $t\in (0,T)$ given by
\[
V(t, \Xi_{\rho(t)}(\tau))=\Big(\frac{{\rm d}\rho}{dt}(t)\frac{\rho(t)}{\omega_{\rho(t)}}\Big)(\tau),\qquad \tau\in\R,
\] 
we infer from the  kinematic boundary condition \eqref{PB1}$_1$  together with \eqref{u} and \eqref{eqcom2} that we may recast \eqref{PB1} as  
\begin{equation*} 
\cfrac{{\rm d}\rho}{{\rm d}t}(t)=\Big(\frac{1}{\rho(t)} \bB(\rho(t))[\beta_1(t)]\Big)'+\frac{\rho(t)'}{\rho^2(t)}\bB(\rho(t))[\beta_1(t)]-\frac{1}{\rho(t)} \bB(\rho(t))^*[\beta_2(t)']
\end{equation*}
for $t\in (0,T)$.
We now define the mapping $\lambda:\cV_r\to\kL(H^{r+1}(\bT),   H^{ r-1}_0(\bT))$ by setting
\begin{equation}\label{defla}
\lambda(\rho)[h]:=\frac{\rho^2\rho'h''-\rho^3h'-4\rho\rho'^2h'-2\rho'^3h'' }{\omega_{\rho}^5}-\Big\langle\frac{\rho^2\rho'h''-\rho^3h'-4\rho\rho'^2h'-2\rho'^3h'' }{\omega_{\rho}^5}\Big\rangle,
\end{equation}
and note that $\lambda(\rho)[\rho]=(f(\rho))'$ for $\rho\in \cV_{r+1}$.
 Arguing as in \cite[Appendix~C]{MP2021}, it is not difficult to  prove that 
\begin{equation}\label{reg:fk}
\lambda\in{\rm C}^\infty(\cV_r, \kL(H^{r+1}(\bT),  H^{ r-1}_0(\bT))) \qquad\text{and}\qquad \kappa\in{\rm C}^\infty(\cV_r, \kL(H^{r+1}(\bT),H^{ r-1}(\bT))).
\end{equation}
Associated   with \eqref{betat1}-\eqref{betat2'}, we define for $\rho\in\cV_r$ and $h\in H^{ r+1}(\bT)$  
\begin{align}
\alpha_1(\rho)[h]&:=(1+\bD(\rho))^{-1}[\kappa(\rho)[h]],\label{alpha1}\\
\alpha_2(\rho)[h]&:=(1-\bD(\rho)^*)^{-1}[\lambda(\rho)[h]],\label{alpha2}
\end{align}
and infer from \eqref{reg:Ds}, \eqref{reg:fk}, and Proposition~\ref{Prop:3} that  
\begin{equation}\label{reg:alphas}
\alpha_1\in{\rm C}^\infty(\cV_r, \kL(H^{ r+1}(\bT),H^{r-1}(\bT))) \quad\text{and}\quad \alpha_2\in{\rm C}^\infty(\cV_r, \kL(H^{ r+1}(\bT),H^{ r-1}_0(\bT))).
\end{equation}
Moreover, if $\rho=h=\rho(t)\in \cV_{r+2}$ for some $t\in(0,T)$, then
\begin{equation}\label{partint}
\beta_1(t)=\alpha_1(\rho(t))[\rho(t)] \quad\text{and}\quad \beta_2(t)'=\alpha_2(\rho(t))[\rho(t)]. 
\end{equation}
Consequently, the Hele-Shaw problem \eqref{PB} can be recast as a quasilinear  evolution problem
\begin{equation}\label{QPP}
\cfrac{{\rm d}\rho}{{\rm d}t}(t)=\Phi(\rho(t))[\rho(t)],\quad t>0,\qquad \rho(0)=\rho_0,
\end{equation}
where the operator $\Phi:=\Phi_1+\Phi_2:\cV_r\to \kL(H^{ r+1}(\bT),H^{ r-2}(\bT))$ is defined by
\begin{align}
\Phi_1(\rho)[h]&:=\Big(\frac{1}{\rho} \bB(\rho)[\alpha_1(\rho)[h]]\Big)',\label{Phi1}\\
\Phi_2(\rho)[h]&:=\frac{\rho'}{\rho^2}\bB(\rho)[\alpha_1(\rho)[h]]-\frac{1}{\rho} \bB(\rho)^*[\alpha_2(\rho)[h]].\label{Phi2}
\end{align}
Recalling~\eqref{reg:Bs}, we infer from \eqref{reg:alphas} that
\begin{equation}\label{reg:Phis}
\Phi_1\in{\rm C}^\infty(\cV_r, \kL(H^{ r+1}(\bT),H^{ r-2}(\bT))) \quad\text{and}\quad \Phi_2\in{\rm C}^\infty(\cV_r, \kL(H^{ r+1}(\bT),  H^{ r-1}(\bT))),
\end{equation}
in particular 
\begin{equation}\label{reg:Phi}
\Phi\in{\rm C}^\infty(\cV_r, \kL(H^{ r+1}(\bT),H^{ r-2}(\bT))).
\end{equation}
Our goal is to apply the quasilinear parabolic theory  developed in~\cite{Am93} (see also \cite{MW20}) to the problem~\eqref{QPP} in order to establish Theorem~\ref{MT1}.
This  will be carried out in the next section, where  we also verify the remaining assumption -- namely, that $\Phi(\rho)$ generates an analytic semigroup  on~$H^{ r-2}(\bT)$ for each $\rho \in \cV_r$.

\section{Proof of Theorem~\ref{MT1}}\label{SEC:3}

This section is devoted to the proof of Theorem~\ref{MT1}, which is presented at the end of the section. 
In the first part, we prove that the problem \eqref{QPP} is of parabolic type by showing that $\Phi(\rho)$, viewed as an unbounded operator in~$H^{r-2}(\bT)$  with domain in~$H^{r+1}(\bT)$, 
 generates  a strongly continuous  analytic semigroup for each~$\rho\in\cV_r$. 
  Theorem~\ref{MT1} is then a consequence of general theory for quasilinear parabolic problems.
 
 Also  in this section $r\in(3/2,2]$ is arbitrarily fixed.

\begin{prop}\label{Prop:4} 
Given $\rho\in\cV_r$,  the operator $\Phi(\rho)$  belongs to~$\kH(H^{r+1}(\bT),H^{r-2}(\bT))$.
\end{prop}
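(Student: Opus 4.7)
The plan is to split $\Phi = \Phi_1 + \Phi_2$ according to \eqref{Phi1}--\eqref{Phi2}, treat $\Phi_2$ as a genuinely lower order perturbation, and then establish generation for $\Phi_1$ by identifying its principal symbol via a freezing-of-coefficients argument on $\bT$. By \eqref{reg:Phis}, $\Phi_2(\rho) \in \kL(H^{r+1}(\bT), H^{r-1}(\bT))$, so complex interpolation \eqref{IP} yields, for every $\e > 0$,
\[
\|\Phi_2(\rho)[h]\|_{H^{r-2}} \leq \e\|h\|_{H^{r+1}} + C_\e \|h\|_{H^{r-2}}, \qquad h \in H^{r+1}(\bT).
\]
The standard perturbation theorem for generators of analytic semigroups then reduces the assertion to showing that $\Phi_1(\rho) \in \kH(H^{r+1}(\bT), H^{r-2}(\bT))$.

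To identify the principal symbol, I would first evaluate $\Phi_1$ at a constant profile $\rho \equiv \rho_* > 0$. A direct computation, based on $|\delta_{[\tau,s]}\Xi_{\rho_*}|^2 = 4\rho_*^2 \sin^2(s/2)$ together with the identities $(\delta_{[\tau,s]} \Xi_{\rho_*}) \cdot \Xi_{\rho_*}'(\tau-s)^\top = -2\rho_*^2 \sin^2(s/2)$ and $(\delta_{[\tau,s]} \Xi_{\rho_*}) \cdot \Xi_{\rho_*}'(\tau-s) = \rho_*^2 \sin s$, gives
\[
\bD(\rho_*)[\beta] = \langle \beta \rangle, \qquad \bB(\rho_*) = -H, \qquad \kappa(\rho_*)[h] = -\rho_*^{-2}\, h'',
\]
where $H$ is the periodic Hilbert transform with symbol $-i\,\mathrm{sign}(k)$. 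Hence $(1+\bD(\rho_*))^{-1}$ acts as the identity on the mean-zero subspace, and $\Phi_1(\rho_*)$ diagonalizes in the Fourier basis with
\[
\Phi_1(\rho_*)[e^{ik\cdot}] = -\frac{|k|^3}{\rho_*^3}\, e^{ik\cdot}, \qquad k \in \Z\setminus\{0\},
\]
and $\Phi_1(\rho_*)[1]=0$. This is the expected leading symbol of the linearized Hele-Shaw operator on a circle of radius $\rho_*$, and $\Phi_1(\rho_*)$ manifestly belongs to $\kH(H^{r+1}(\bT), H^{r-2}(\bT))$ with explicit sectorial resolvent estimates.

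To pass from constant to variable $\rho$, I would cover $\bT$ by finitely many small arcs centered at $\tau_1,\ldots,\tau_N$ with a subordinate smooth partition of unity $(\chi_j)$ and cut-offs $(\wt\chi_j)$ satisfying $\wt\chi_j\chi_j=\chi_j$. Using the commutator and localization results of Appendix~\ref{SEC:B}, the smooth dependence properties \eqref{basreg1}--\eqref{basreg2}, \eqref{reg:Ds}, \eqref{reg:fk}, and \eqref{reg:alphas}, and crucially the derivative identities of Lemmas~\ref{L:1} and~\ref{L:2}, which allow the outer derivative in $\Phi_1$ to be absorbed into the singular integrals and thereby reveal the true order of each piece, the target estimate is, for every $\e>0$ and all $j$ with $\mathrm{supp}(\chi_j)$ sufficiently small,
\[
\bigl\|\chi_j \Phi_1(\rho)[h] - \Phi_1(\rho(\tau_j))[\chi_j h]\bigr\|_{H^{r-2}} \leq \e\|h\|_{H^{r+1}} + C_\e \|h\|_{H^{r-1}},\qquad h\in H^{r+1}(\bT).
\]
Combining these local estimates with the explicit resolvent estimates for the frozen operators $\Phi_1(\rho(\tau_j))$ by a standard patching / Neumann-series argument as in \cite{Am93} then yields the sectorial resolvent estimate for $\Phi_1(\rho)$, proving that $\Phi_1(\rho)\in\kH(H^{r+1}(\bT), H^{r-2}(\bT))$.

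The main technical obstacle is the freezing/localization step. The operators $(1+\bD(\rho))^{-1}$, $\bB(\rho)$ and the outer derivative are all non-local, so commutators with cut-off functions do not trivially gain derivatives. The key mechanism that closes the argument is the derivative identity of Lemma~\ref{L:2} (and its twin Lemma~\ref{L:1}) combined with the mapping properties of $\sfB^p_{n,m}(\rho)$ recorded in Appendix~\ref{SEC:A}: together they convert derivatives of the singular integrals into the adjoint singular integrals applied to derivatives of the density, which then allows a careful quantitative accounting of which pieces of $\chi_j \Phi_1(\rho) - \Phi_1(\rho(\tau_j)) \chi_j$ actually land in the required spaces. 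Producing the right small $\e$-coefficient on the $H^{r+1}$-norm, and showing that the remainder is controlled by the weaker $H^{r-1}$-norm, is the main technical work.
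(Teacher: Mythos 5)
Your overall architecture (split $\Phi=\Phi_1+\Phi_2$, dispose of $\Phi_2$ by perturbation, identify a local constant-coefficient model for $\Phi_1$, localize with a partition of unity and patch resolvent estimates) is exactly the paper's strategy, and your constant-profile computations ($\bD(\rho_*)[\beta]=\langle\beta\rangle$, $\bB(\rho_*)=-H$, $\Phi_1(\rho_*)=\rho_*^{-3}H\circ\frac{{\rm d}^3}{{\rm d}\tau^3}$) are correct. The genuine gap is in your choice of frozen operator. The correct local model of $\Phi_1(\rho)$ near $\tau_j$ is \emph{not} $\Phi_1(\rho(\tau_j))$, i.e. $\Phi_1$ evaluated at the constant profile $\rho(\tau_j)$ with symbol $-|k|^3/\rho^3(\tau_j)$; it is $\omega_\rho^{-3}(\tau_j)\,\Phi_1(1)$ with $\omega_\rho=(\rho^2+\rho'^2)^{1/2}$, as in Lemma~\ref{L:loc}. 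The discrepancy comes from the curvature operator \eqref{krhof}: the coefficient of $h''$ in $\kappa(\rho)[h]$ is $-\rho/\omega_\rho^3$, which remembers $\rho'(\tau_j)$, while $\bB(\rho)$ localizes to $-H$ with coefficient $1$ and the outer factor is $1/\rho$, so the frozen coefficient is $\frac{1}{\rho}\cdot\frac{\rho}{\omega_\rho^3}(\tau_j)=\omega_\rho^{-3}(\tau_j)$ (cf. Lemma~\ref{L:localpha}~(ii)). Consequently your claimed localization estimate
\[
\bigl\|\chi_j \Phi_1(\rho)[h] - \Phi_1(\rho(\tau_j))[\chi_j h]\bigr\|_{H^{r-2}} \leq \e\|h\|_{H^{r+1}} + C_\e \|h\|_{H^{r-1}}
\]
is false whenever $\rho'(\tau_j)\neq 0$: the difference of the two principal parts is $\bigl(\omega_\rho^{-3}(\tau_j)-\rho^{-3}(\tau_j)\bigr)\Phi_1(1)[\chi_j h]$ plus controllable terms, a full third-order operator with a coefficient that does not shrink as the support of $\chi_j$ shrinks, so it cannot be absorbed with an arbitrarily small $\e$, and the patching argument collapses. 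The fix is simply to freeze the coefficient rather than the profile, which is what the paper does; with that correction your plan matches the paper's proof of Proposition~\ref{Prop:4}.

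Two smaller points. First, an a priori resolvent estimate obtained by patching gives injectivity and a lower bound, but not surjectivity of $\lambda-\Phi_1(\rho)$; the paper secures the range condition by proving Lemma~\ref{L:loc} uniformly along the homotopy $\rho_\theta=\theta\rho+1-\theta$, obtaining \eqref{estres} uniformly in $\theta$, and then invoking the method of continuity from the explicitly invertible operator $\omega-\Phi_1(1)$. Your proposal, which freezes only at $\rho$ itself, should either include this homotopy or supply invertibility at one point by some other means. Second, your reduction of $\Phi_2$ is correct in substance, but the one-line appeal to \eqref{IP} does not by itself yield the stated $\e$-relative bound from $\Phi_2(\rho)\in\kL(H^{r+1}(\bT),H^{r-1}(\bT))$; either argue via compactness of $H^{r-1}(\bT)\hookrightarrow H^{r-2}(\bT)$, or, as the paper does, cite the perturbation theorem for operators mapping $E_1$ into an intermediate space directly.
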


The proof of Proposition~\ref{Prop:4} requires some preliminaries. To this end, we fix  $\rho\in\cV_r$  and choose~$r'\in(3/2,r)$.
Since $\Phi_2(\rho)\in\kL(H^{r+1}(\bT),H^{r-1}(\bT))$ is a lower order perturbation  due to~\eqref{reg:Phis},
 it suffices to show that $\Phi_1(\rho)\in\kH(H^{r+1}(\bT),H^{r-2}(\bT))$; 
see \cite[I. Theorem 1.3.1]{LQPP} and \eqref{IP}.
 Recalling that $H = 2B^{0}_{0,1}(1)$ is the periodic Hilbert transform, we infer from \eqref{krhof}, \eqref{forD}, \eqref{forB}, and~\eqref{Phi1} that 
\begin{equation}\label{phi1fs}
\Phi_1(1)=H\circ\frac{{\rm d}^3}{{\rm d}\tau^3}
\end{equation}
is a Fourier multiplier with symbol $(-|k|^3)_{k\in\Z}$.
The key argument in the proof of Proposition~\ref{Prop:4} is established in Lemma~\ref{L:loc} below.  
There, the operator $\Phi_1(\theta\rho+1-\theta)$,  for~${\tau \in [0,1]}$, is localized using Fourier multipliers, whose principal part coincides, up to a multiplicative positive constant, with that of~$\Phi_1(1)$.
 In order to  give precise statements,  we fix for each~$\e\in(0,1)$ an {\it $\e$-partition of unity;} that is, a set~$\{\pi_{j}^{\e}: 1\leq j\leq  q(\e)\}\subset {\rm C}^\infty(\bT,[0,1])$ with~$q(\e)\in\N$  sufficiently large,  such that
	\begin{equation}\label{pi_j}
	\begin{aligned}
		\bullet & \,\supp \pi_j^\e= I_j^\e+2\pi \Z\text{ with } I_j^\e:= [\tau_j^\e-\e,\tau_j^\e+\e] \text{ and } \tau_j^\e:=j\e;\\
		\bullet & \, \sum_{j=1}^{q(\e)}\pi_j^\e =1 \text{ in } \bT. 
	\end{aligned}
	\end{equation}	 
	Moreover, we  associate  with each $\e$-partition of unity a set $\{\chi_j^\e: 1\leq j \leq q(\e)\}\subset{\rm C}^\infty(\bT,[0,1])$  satisfying
	\begin{equation}\label{chi_j}
	\begin{aligned}
		\bullet & \,\supp \chi_j^\e =J_j^\e+2\pi \Z \text{ with } J_j^\e=[\tau_j^\e-2\e,\tau_j^\e+2\e] ;\\
		\bullet & \,\chi_j^\e =1 \text{ on } \supp \pi_j^\e.
	\end{aligned}
	\end{equation}
Then, for each  $\e \in (0,1)$ and  $s \in\R$, there exists  a constant $C = C(\e, s) \geq 1$  such that
\begin{equation}\label{eqnorm}
C^{-1} \|\rho\|_{H^{s}} \leq \sum_{j=1}^{q(\e)} \|\pi_j^\e \rho\|_{H^{s}} \leq C \|\rho\|_{H^{s}}, \qquad \rho \in H^{s}(\bT),
\end{equation}
meaning that the middle term defines an equivalent norm on $H^{s}(\bT)$.  
We first establish a technical result for the localization.

\begin{lemma}\label{L:localpha} Set $\rho_\theta:=\theta\rho+1-\theta$  for $\theta\in[0,1]$.
\begin{itemize}
\item[(i)] There is a constant~${C_0>0}$ and for each $\e\in(0,1)$ there is a constant~${K=K(\e)>0}$ such that for all  $0\leq j\leq q(\e)$, $\theta\in[0,1]$, and $ h\in H^{r-1}(\bT)$ it holds that
\begin{equation}\label{localph0}
\|\pi^\e_j\alpha_1(\rho_\theta)[h]\|_{H^{r-1}}\leq C_0\|\pi^\e_j h\|_{H^{r+1}}+K\|h\|_{H^{r'+1}}.
\end{equation}
\item[(ii)]  Let  $\mu>0$. For each sufficiently small $\e\in(0,1)$ there is a constant~${K=K(\e)>0}$ such that for all  $0\leq j\leq q(\e)$, $\theta\in[0,1]$, and $ h\in H^{r-1}(\bT)$  it holds that
\begin{equation}\label{localph1}
\qquad\Big\|\pi^\e_j\alpha_1(\rho_\theta)[h]+\frac{\rho_\theta}{\omega^3_{\rho_\theta}}(\tau_j^\e)(\pi_j^\e h)''\Big\|_{H^{r-1}}\leq \mu\|\pi^\e_j h\|_{H^{r+1}}+K\|h\|_{H^{r'+1}}.
\end{equation}
\end{itemize}
\end{lemma}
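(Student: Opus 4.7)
Both estimates are obtained by localizing the defining equation
\[
(1+\bD(\rho_\theta))[\alpha_1(\rho_\theta)[h]] = -A_\theta h'',\qquad A_\theta := \frac{\rho_\theta}{\omega_{\rho_\theta}^3}.
\]
Multiplying by $\pi^\e_j$ and using the commutator notation~\eqref{comm}, one rewrites this as
\[
(1+\bD(\rho_\theta))[\pi^\e_j\alpha_1(\rho_\theta)[h]] = -\pi^\e_j A_\theta h'' + \llbracket\pi^\e_j,\bD(\rho_\theta)\rrbracket[\alpha_1(\rho_\theta)[h]].
\]
Proposition~\ref{Prop:3} combined with the smoothness from \eqref{reg:Ds} yields a $\theta$-uniform bound on $\|(1+\bD(\rho_\theta))^{-1}\|_{\kL(H^{r-1})}$ over the compact segment $\{\rho_\theta:\theta\in[0,1]\}\subset\cV_r$, so both claims reduce to $H^{r-1}$-estimates of the right-hand side, after expanding $\pi^\e_j h''=(\pi^\e_j h)''-2(\pi^\e_j)'h'-(\pi^\e_j)''h$ to isolate the top-order contribution.

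For (i), the algebra estimate \eqref{algebra} applied to $A_\theta(\pi^\e_j h)''$ produces the leading term $C_0\|\pi^\e_j h\|_{H^{r+1}}$, with $C_0$ depending only on the uniform $H^{r-1}\cap L^\infty$-bound of $A_\theta$ along the segment. The remaining Leibniz pieces in $h'$ and $h$ are absorbed into $K_\e\|h\|_{H^{r'+1}}$ via the embedding $H^{r'+1}\hookrightarrow H^r$, which is valid since $r'>3/2\geq r-1$. The commutator $\llbracket\pi^\e_j,\bD(\rho_\theta)\rrbracket[\alpha_1(\rho_\theta)[h]]$ is handled by the Appendix~\ref{SEC:B} commutator bounds together with the mapping property $\alpha_1\in\kL(H^{r'+1},H^{r'-1})$ from \eqref{reg:alphas}.

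For (ii), I would introduce the corrected candidate $g_j := \pi^\e_j\alpha_1(\rho_\theta)[h] + A_\theta(\tau_j^\e)(\pi^\e_j h)''$ and compute
\[
(1+\bD(\rho_\theta))[g_j] = \bigl(A_\theta(\tau_j^\e)-A_\theta\bigr)(\pi^\e_j h)'' + A_\theta(\tau_j^\e)\,\bD(\rho_\theta)[(\pi^\e_j h)''] + R_j,
\]
where $R_j$ gathers the lower-order Leibniz remainders and the commutator already controlled in (i). For the first displayed term, the Hölder embedding $H^{r-1}(\bT)\hookrightarrow C^{r-3/2}(\bT)$ gives $\|A_\theta-A_\theta(\tau_j^\e)\|_{L^\infty(\supp\pi^\e_j)}\leq C\e^{r-3/2}$, which becomes arbitrarily small as $\e\to 0$; combined with \eqref{algebra} and the embedding-based estimate $\|(\pi^\e_j h)''\|_\infty\leq C_\e\|h\|_{H^{r'+1}}$ (using $H^{r'+1}\hookrightarrow C^2$), this produces a contribution of the form $\mu\|\pi^\e_j h\|_{H^{r+1}}+K_\e\|h\|_{H^{r'+1}}$ provided $\e$ is small enough.

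The main obstacle is the middle term $\bD(\rho_\theta)[(\pi^\e_j h)'']$, which is a priori of size $\|\pi^\e_j h\|_{H^{r+1}}$ with a non-small constant. The decisive observation is the pointwise identity $\bD(c)[\beta]=\langle\beta\rangle$, valid for any positive constant $c$, which follows from the tangent half-angle formulas applied to the representation \eqref{forD}; in particular $\bD(\rho_\theta(\tau_j^\e))[(\pi^\e_j h)'']=0$ by periodicity of~$(\pi^\e_j h)''$. It then suffices to bound the variable-coefficient deviation $(\bD(\rho_\theta)-\bD(\rho_\theta(\tau_j^\e)))[(\pi^\e_j h)'']$ in $H^{r-1}$, and this is precisely the content of the localization results in Appendix~\ref{SEC:B} for the singular integral family \eqref{Bnmp} through which $\bD$ is represented: they produce a splitting into an $\e$-small principal-order piece and a smoother remainder controlled by $\|h\|_{H^{r'+1}}$. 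Applying $(1+\bD(\rho_\theta))^{-1}$ to the total right-hand side and, if necessary, interpolating via \eqref{IP} between $H^{r-1}$ and $H^{r'-1}$ then concludes the estimate.
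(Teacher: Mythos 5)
Your part (i) is essentially the paper's proof: localize the equation $(1+\bD(\rho_\theta))[\alpha_1(\rho_\theta)[h]]=\kappa(\rho_\theta)[h]$ with $\pi_j^\e$, use the $\theta$-uniform invertibility of $1+\bD(\rho_\theta)$ on $H^{r-1}(\bT)$ (Proposition~\ref{Prop:3} plus \eqref{reg:Ds} and compactness of the segment), the commutator bound of Lemma~\ref{L:AB1}, and \eqref{algebra}; this is fine. In part (ii) you take a slightly different route: the paper never inverts again, but writes $\pi_j^\e\alpha_1=\pi_j^\e\kappa(\rho_\theta)[h]+\pi_j^\e\bD(\rho_\theta)[\alpha_1(\rho_\theta)[h]]$ and controls the second term by Lemma~\ref{L:AB2} applied to the representation \eqref{forD}, where the frozen Fourier-multiplier contributions of the two singular summands cancel, so that $\pi_j^\e\bD(\rho_\theta)[\cdot]$ is ``$\mu$-small plus lower order''; combined with \eqref{localph0} this closes the estimate. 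Your substitute for that cancellation, the identity $\bD(c)[\beta]=\langle\beta\rangle$ for constants $c$ (which is correct), is the same structural fact in cleaner form, and your treatment of the frozen-coefficient error $(A_\theta(\tau_j^\e)-A_\theta)(\pi_j^\e h)''$ coincides with the paper's \eqref{hsv3}.

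The one genuine gap is your claim that the bound for the middle term, i.e. for $(\bD(\rho_\theta)-\bD(\rho_\theta(\tau_j^\e)))[(\pi_j^\e h)'']$ (equivalently for $\bD(\rho_\theta)[(\pi_j^\e h)'']$, since the frozen operator annihilates mean-zero functions), is ``precisely the content'' of the Appendix~\ref{SEC:B} results. It is not: Lemma~\ref{L:AB2} (see \eqref{locbnm}) and Lemma~\ref{L:AB3} estimate $\pi_j^\e a\,\sfB^0_{n,m}(\rho)[b\beta]$ minus its frozen model, i.e. the \emph{output} carries the cutoff while the argument is arbitrary, whereas in your term the \emph{argument} is localized and the output is global, so those lemmas do not apply as stated and your key inequality $\|\bD(\rho_\theta)[(\pi_j^\e h)'']\|_{H^{r-1}}\leq\mu\|\pi_j^\e h\|_{H^{r+1}}+K\|h\|_{H^{r'+1}}$ is asserted rather than proved. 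It can be closed with the paper's tools, but this needs an extra decomposition: write $\bD(\rho_\theta)[(\pi_j^\e h)'']=\chi_j^\e\bD(\rho_\theta)[(\pi_j^\e h)'']+(1-\chi_j^\e)\bD(\rho_\theta)[(\pi_j^\e h)'']$; the near part is handled by a $\chi_j^\e$-version of Lemma~\ref{L:AB2} applied to \eqref{forD}, where the frozen symbols cancel, giving $\mu\|\pi_j^\e h\|_{H^{r+1}}+K\|h\|_{H^{r'+1}}$; for the far part use that $(\pi_j^\e h)''=\chi_j^\e(\pi_j^\e h)''$ and $(1-\chi_j^\e)\chi_j^\e=0$, so $(1-\chi_j^\e)\bD(\rho_\theta)[(\pi_j^\e h)'']=-(1-\chi_j^\e)\llbracket\chi_j^\e,\bD(\rho_\theta)\rrbracket[(\pi_j^\e h)'']$, which Lemma~\ref{L:AB1} bounds in $H^1(\bT)\hookrightarrow H^{r-1}(\bT)$ by $K\|(\pi_j^\e h)''\|_2\leq K\|h\|_{H^{r'+1}}$. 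With this step supplied, your argument is complete; without it, the decisive estimate in (ii) rests on a citation that does not cover it.
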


\begin{proof} 
In the following   we denote by $C$ constants that are independent on $\e$, whereas constants that may depend  $\e$  are denoted by $K$.

To prove (i),  let~$\e\in(0,1)$,  $0\leq j\leq q(\e)$, $\theta\in[0,1]$, and~${h\in H^{r+1}(\bT)}$. 
 We then infer from \eqref{krhof} and~\eqref{alpha1} that 
\begin{equation*}
(1+\bD(\rho_\theta))[\pi_j^\e\alpha_1(\rho_\theta)[h]]=-\frac{\rho_\theta}{\omega_{\rho_\theta}^3}\pi_j^\e h''-\llbracket\pi_j^\e, \bD(\rho_\theta) \rrbracket[\alpha_1(\rho_\theta)[h]],
\end{equation*}
 where we recall the notation~\eqref{comm}.
Combining \eqref{reg:Ds} and  Proposition~\ref{Prop:3}, we then get
\begin{align*}
\|\pi^\e_j\alpha_1(\rho_\theta)[h]\|_{H^{r-1}}&\leq C\Big\|\frac{\rho_\theta}{\omega_{\rho_\theta}^3}\pi_j^\e h''\Big\|_{H^{r-1}}+ C\|\llbracket\pi_j^\e, \bD(\rho_\theta) \rrbracket[\alpha_1(\rho_\theta)[h]]\|_{H^{r-1}}\\
&\leq C_0\|\pi^\e_j h\|_{H^{r+1}}+K\|h\|_{H^{r}}+C\|\llbracket\pi_j^\e, \bD(\rho_\theta) \rrbracket[\alpha_1(\rho_\theta)[h]]\|_{H^{r-1}}.
\end{align*}
Moreover, in view of Lemma~\ref{L:AB1}, \eqref{forD}, and \eqref{reg:alphas} (with $r=r'$ therein), we have
\[
\|\llbracket\pi_j^\e, \bD(\rho_\theta) \rrbracket[\alpha_1(\rho_\theta)[h]]\|_{H^{r-1}}\leq K\|\alpha_1(\rho_\theta)[h]\|_2\leq K\|h\|_{H^{r'+1}},
\]
and~\eqref{localph0} follows.

To establish (ii), we multiply~\eqref{alpha1} by $\pi_j^\e$ and obtain that
\begin{equation}\label{hsv1}
\pi_j^\e\alpha_1(\rho_\theta)[h]+\frac{\rho_\theta}{\omega^3_{\rho_\theta}}(\tau_j^\e)(\pi_j^\e h)''=\pi_j^\e\kappa(\rho_\theta)[h]+\frac{\rho_\theta}{\omega^3_{\rho_\theta}}(\tau_j^\e)(\pi_j^\e h)''+\pi_j^\e \bD(\rho_\theta)[\alpha_1(\rho_\theta)[h]].
\end{equation}
In view of \eqref{forD}, \eqref{reg:alphas} (with $r=r'$ therein), \eqref{localph0}, and Lemma~\ref{L:AB2}, for  sufficiently small~${\e\in(0,1)}$ and all $0\leq j\leq q(\e)$, $\theta\in[0,1]$, and~${h\in H^{r+1}(\bT)}$   we have 
\begin{equation}\label{hsv2}
\begin{aligned}
\|\pi_j^\e \bD(\rho_\theta)[\alpha_1(\rho_\theta)[h]]\|_{H^{r-1}}&\leq \frac{\mu}{2C_0}\|\pi_j^\e\alpha_1(\rho_\theta)[h]\|_{H^{r+1}}+K\|\alpha_1(\rho_\theta)[h]\|_{H^{r'+1}}\\
&\leq (\mu/2)\|\pi^\e_j h\|_{H^{r+1}}+K\|h\|_{H^{r'+1}}.
\end{aligned}
\end{equation}
 Moreover, recalling~\eqref{krhof}, we use the identity $\chi_j^\e\pi_j^\e=\pi_j^\e$ and \eqref{algebra} to estimate 
 \begin{equation}\label{hsv3}
\begin{aligned}
&\Big\|\pi_j^\e\kappa(\rho_\theta)[h]+\frac{\rho_\theta}{\omega_{\rho_\theta}}(\tau_j^\e)(\pi_j^\e h)''\Big\|_{H^{r-1}}\\
&\leq \Big\|\chi_j^\e\Big(\frac{\rho_\theta}{\omega^3_{\rho_\theta}}-\frac{\rho_\theta}{\omega^3_{\rho_\theta}}(\tau_j^\e)\Big)(\pi_j^\e h)''\Big\|_{H^{r-1}}+K\|h\|_{H^{r'+1}}\\
&\leq C\Big\|\chi_j^\e\Big(\frac{\rho_\theta}{\omega^3_{\rho_\theta}}-\frac{\rho_\theta}{\omega^3_{\rho_\theta}}(\tau_j^\e)\Big)\Big\|_\infty\| \pi_j^\e h \|_{H^{r+1}}+K\|h\|_{H^{r'+1}}\\
&\leq (\mu/2)\|\pi^\e_j h\|_{H^{r+1}}+K\|h\|_{H^{r'+1}},
\end{aligned}
\end{equation}
where the H\"older continuity of the function $\rho_\theta\omega_{\rho_\theta}^{-3}$ has been used in the last step.
The estimate~\eqref{localph1} follows immediately from \eqref{hsv1}-\eqref{hsv3}. 
\end{proof}

 Now we can state and prove the key step of the localization procedure:
\begin{lemma}\label{L:loc}
 Let  $\mu>0$. Then, for each sufficiently small $\e\in(0,1)$ there exists  a positive  constant~${K=K(\e)}$ such that for all $0\leq j\leq q(\e)$, $\theta\in[0,1]$, and $ h\in H^{r-1}(\bT)$ it holds that
\begin{equation}\label{localeq}
\|\pi^\e_j\Phi_1(\theta\rho+1-\theta)[h]-\omega^{-3}_{\theta\rho+1-\theta}(\tau_j^\e)\Phi_1(1)[\pi^\e_j h]\|_{H^{r-2}}\leq\mu\|\pi^\e_j h\|_{H^{r+1}}+K\|h\|_{H^{r'+1}}.
\end{equation}
\end{lemma}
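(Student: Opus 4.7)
The aim is to decompose $\Phi_1(\rho_\theta)[h]$ according to \eqref{Phi1} as $\bigl(\rho_\theta^{-1}\bB(\rho_\theta)[\alpha_1(\rho_\theta)[h]]\bigr)'$, where $\rho_\theta := \theta\rho+1-\theta$, and localize each constituent around $\tau_j^\e$, collecting at every step either the target piece $\omega_{\rho_\theta}^{-3}(\tau_j^\e)\Phi_1(1)[\pi_j^\e h]$ or a remainder that fits under $\mu\|\pi_j^\e h\|_{H^{r+1}} + K\|h\|_{H^{r'+1}}$. I first pull $\pi_j^\e$ past the outer derivative via the Leibniz rule; the term with $(\pi_j^\e)'$ involves only $\rho_\theta^{-1}\bB(\rho_\theta)[\alpha_1(\rho_\theta)[h]] \in H^{r-1}(\bT)$, hence lies in $H^{r-2}(\bT)$ with norm $\lesssim\|h\|_{H^{r'+1}}$ by \eqref{reg:Bs} and \eqref{reg:alphas} applied with $r$ replaced by $r'$, together with interpolation. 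Next I commute $\pi_j^\e$ through $\rho_\theta^{-1}$ (trivial) and then through $\bB(\rho_\theta)$; invoking Lemma~\ref{L:AB1} on each of the four summands in the representation \eqref{forB} of $\bB(\rho_\theta)$ shows that $\llbracket \pi_j^\e,\bB(\rho_\theta)\rrbracket$ maps $L_2(\bT)$ into $H^{r-1}(\bT)$ with constant $K=K(\e)$, so this contribution is absorbed into $K\|h\|_{H^{r'+1}}$.

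With $\pi_j^\e$ now sitting directly on $\alpha_1(\rho_\theta)[h]$, I apply Lemma~\ref{L:localpha}(ii) with tolerance $\mu'>0$ to write $\pi_j^\e\alpha_1(\rho_\theta)[h] = -\frac{\rho_\theta(\tau_j^\e)}{\omega_{\rho_\theta}^3(\tau_j^\e)}(\pi_j^\e h)'' + R_j$ with $\|R_j\|_{H^{r-1}} \leq \mu'\|\pi_j^\e h\|_{H^{r+1}} + K\|h\|_{H^{r'+1}}$. Pushing $R_j$ through the $H^{r-1}$-bounded map $\rho_\theta^{-1}\bB(\rho_\theta)$ and then the outer derivative leaves the principal term
\begin{equation*}
-\frac{\rho_\theta(\tau_j^\e)}{\omega_{\rho_\theta}^3(\tau_j^\e)}\Bigl(\rho_\theta^{-1}\bB(\rho_\theta)[(\pi_j^\e h)'']\Bigr)'
\end{equation*}
plus an $O(\mu'\|\pi_j^\e h\|_{H^{r+1}}+K\|h\|_{H^{r'+1}})$ contribution in $H^{r-2}$.

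The crux is now to freeze $\rho_\theta^{-1}\bB(\rho_\theta)$ inside this principal term. Inserting the fatter cutoff $\chi_j^\e$ and using the H\"older oscillation bound $|\rho_\theta^{-1} - \rho_\theta^{-1}(\tau_j^\e)| = O(\e^{r-3/2})$ on $\mathrm{supp}\,\chi_j^\e$ together with \eqref{algebra}, I replace $\rho_\theta^{-1}$ by the constant $\rho_\theta^{-1}(\tau_j^\e)$ up to an error $o_\e(1)\|\pi_j^\e h\|_{H^{r+1}} + K\|h\|_{H^{r'+1}}$. For the operator $\bB(\rho_\theta)$ itself I appeal to the localization result of Appendix~\ref{SEC:B} (Lemma~\ref{L:AB2}) applied to each of the $\sfB_{n,m}^p$ summands in \eqref{forB}, reducing $\bB(\rho_\theta)[(\pi_j^\e h)'']$ to $\bB(\rho_\theta(\tau_j^\e))[(\pi_j^\e h)'']$ within the same tolerance. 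A direct evaluation of \eqref{forB} for a positive constant $c$ (where $\rho'\equiv 0$ kills the first, second and fourth summands, and $\sfB_{0,1}^0(c)=\frac{1}{4c^2}\cdot\frac{\pi}{2}H$) gives the identity $\bB(c) = -H$. Substituting and using that $H$ is a Fourier multiplier and hence commutes with $d/d\tau$,
\begin{equation*}
-\frac{\rho_\theta(\tau_j^\e)}{\omega_{\rho_\theta}^3(\tau_j^\e)}\cdot\frac{1}{\rho_\theta(\tau_j^\e)}\bigl(-H[(\pi_j^\e h)'']\bigr)' = \omega_{\rho_\theta}^{-3}(\tau_j^\e)H[(\pi_j^\e h)'''] = \omega_{\rho_\theta}^{-3}(\tau_j^\e)\Phi_1(1)[\pi_j^\e h]
\end{equation*}
by \eqref{phi1fs}. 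Choosing $\mu'$ small first and then $\e$ small, all accumulated errors fit under $\mu\|\pi_j^\e h\|_{H^{r+1}}+K\|h\|_{H^{r'+1}}$, yielding \eqref{localeq}.

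\textbf{Main obstacle.} The delicate step is the freezing of $\bB(\rho_\theta)$ in the preceding paragraph. Unlike the multiplier $\rho_\theta^{-1}$, $\bB(\rho_\theta)$ is a nonlocal operator depending on the full graph $\Gamma_{\rho_\theta}$, so no pointwise argument applies. One has to split into a near contribution, where $\rho_\theta$ is close to $\rho_\theta(\tau_j^\e)$ on $\mathrm{supp}\,\chi_j^\e$ thanks to H\"older continuity, and a far contribution, where the kernels of the $B_{n,m}^p$ in \eqref{Bnmp} are no longer singular due to the separation of supports of $\pi_j^\e$ and the test points. Only the precise kernel structure revealed by \eqref{Bnmp} together with the Appendix~\ref{SEC:B} estimates makes the prefactor in front of $\|\pi_j^\e h\|_{H^{r+1}}$ shrink to $0$ as $\e\downarrow 0$, which is what matches the arbitrary $\mu$ in the target bound.
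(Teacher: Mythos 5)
Your overall strategy is the same as the paper's: split off the $(\pi_j^\e)'$ term, reduce the $H^{r-2}$ bound of a derivative to an $H^{r-1}$ bound, use Lemma~\ref{L:localpha} to replace $\pi_j^\e\alpha_1(\rho_\theta)[h]$ by $-\tfrac{\rho_\theta}{\omega_{\rho_\theta}^3}(\tau_j^\e)(\pi_j^\e h)''$, and use the Appendix~\ref{SEC:B} machinery to freeze the singular integral into a multiple of $H$, ending with $\Phi_1(1)=H\circ d^3/d\tau^3$. Steps (a)--(c) of your plan (Leibniz, commuting $\pi_j^\e$ through $\bB(\rho_\theta)$ via Lemma~\ref{L:AB1}, substituting via Lemma~\ref{L:localpha}(ii) and pushing the remainder through the $H^{r-1}$-bounded map $\rho_\theta^{-1}\bB(\rho_\theta)$) are correct, up to routine uniformity-in-$\theta$ remarks.

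The gap is the step you yourself flag as the ``main obstacle'': freezing $\rho_\theta^{-1}\bB(\rho_\theta)$ in the principal term $-\tfrac{\rho_\theta}{\omega_{\rho_\theta}^3}(\tau_j^\e)\bigl(\rho_\theta^{-1}\bB(\rho_\theta)[(\pi_j^\e h)'']\bigr)'$. You cite Lemma~\ref{L:AB2} as ``reducing $\bB(\rho_\theta)[(\pi_j^\e h)'']$ to $\bB(\rho_\theta(\tau_j^\e))[(\pi_j^\e h)'']$ within the same tolerance'', but that is not what Lemma~\ref{L:AB2} says: it estimates $\pi_j^\e a\,\sfB^0_{n,m}(\rho)[b\beta]$ against a frozen constant times $H[\pi_j^\e\beta]$, i.e.\ it requires the cutoff to sit \emph{outside} the variable-coefficient operator and the density $\beta$ to be un-localized. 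In your configuration there is no outer cutoff left (you have already commuted $\pi_j^\e$ inside and it now sits in $(\pi_j^\e h)''$), so the lemma does not apply as stated, and the pointwise H\"older-oscillation argument you use for the multiplier $\rho_\theta^{-1}$ cannot handle the nonlocal operator: away from $I_j^\e$ the coefficients of $\bB(\rho_\theta)$ are not close to their frozen values, and the needed smallness there must instead come from support separation (a far-field/pseudo-locality estimate), which you gesture at but never prove. This is precisely where the paper's ordering pays off: it applies Lemma~\ref{L:AB2} \emph{first}, while $\pi_j^\e$ is still outside and the density is the un-localized $\alpha_1(\rho_\theta)[h]$ (see~\eqref{uta2}), and only afterwards substitutes $\pi_j^\e\alpha_1(\rho_\theta)[h]$ inside the Fourier multiplier $H$ via Lemma~\ref{L:localpha}(ii) (see~\eqref{uta3}), so the frozen operator is always $H$ acting on $\pi_j^\e(\cdot)$ and no separate freezing of $\bB$ on a localized density is ever needed. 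Your argument can be repaired within your ordering, e.g.\ by writing $(\pi_j^\e h)''=\pi_j^\e h''+2(\pi_j^\e)'h'+(\pi_j^\e)''h$ (the last two terms are $K\|h\|_{H^{r'+1}}$-errors), using Lemma~\ref{L:AB1} to move $\pi_j^\e$ back outside $\bB(\rho_\theta)$, and then invoking Lemma~\ref{L:AB2} with $\beta=h''$; but as written the crucial estimate is asserted, not proved. Two minor points: the $p\ge1$ summands of~\eqref{forB} are handled by Lemma~\ref{L:B3p} (smoothing), not by Lemma~\ref{L:AB2}; and your constant for $\sfB^0_{0,1}(c)$ is off (one has $\sfB^0_{0,1}(c)=\tfrac{1}{2c^2}H$), although the identity $\bB(c)=-H$ you actually use is correct.
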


\begin{proof} We use the same convention regarding the constants  $C$ and $K$ as in the proof of  Lemma~\ref{L:localpha} and we set again $\rho_\theta:=\theta\rho+1-\theta$  for $\theta\in[0,1]$.
To start,  for $\e\in(0,1)$, $0\leq j\leq q(\e)$, $\theta\in[0,1]$, and~$ h\in H^{r-1}(\bT)$, we note  from \eqref{Phi1} and \eqref{phi1fs} that
\begin{align*}
&\|\pi_j^\e\Phi_1(\rho_\theta)[h]-\omega^{-3}_{\rho_\theta}(\tau_j^\e)\Phi_1(1)[\pi^\e_j h]\|_{H^{r-2}}\\
&\leq \Big\|\Big(\frac{\pi_j^\e}{\rho_\theta} \bB(\rho_\theta)[\alpha_1(\rho_\theta)[h]]-\omega^{-3}_{\rho_\theta}(\tau_j^\e)H[(\pi_j^\e h)'']\Big)'\Big\|_{H^{r-2}}
+\Big\|\frac{(\pi_j^\e)'}{\rho_\theta} \bB(\rho_\theta)[\alpha_1(\rho_\theta)[h]]\Big\|_{H^{r-2}},
\end{align*}
and, by  \eqref{reg:Bs}  and \eqref{reg:alphas}    (both with $r=r'$ therein), 
\begin{align*}
\Big\|\frac{(\pi_j^\e)'}{\rho_\theta} \bB(\rho_\theta)[\alpha_1(\rho_\theta)[h]]\Big\|_{H^{r-2}}&\leq \Big\|\frac{(\pi_j^\e)'}{\rho_\theta} \bB(\rho_\theta)[\alpha_1(\rho_\theta)[h]]\Big\|_{H^{r'-1}}\leq K\|h\|_{H^{r'+1}}.
\end{align*}
Since $\|\varrho'\|_{H^{r-2}}\leq \|\varrho\|_{H^{r-1}}$ for $\varrho \in H^{r-1}(\bT)$, we thus have
\begin{equation}\label{uta1}
\begin{aligned}
&\|\pi_j^\e\Phi_1(\rho_\theta)[h]-\omega^{-3}_{\tau\rho}(\tau_j^\e)\Phi_1(1)[\pi^\e_j h]\|_{H^{r-2}}\\
&\leq \Big\|\frac{\pi_j^\e}{\rho_\theta} \bB(\rho_\theta)[\alpha_1(\rho_\theta)[h]]-\omega^{-3}_{\rho_\theta}(\tau_j^\e)H[(\pi_j^\e h)'']\Big\|_{H^{r-1}}
+K\|h\|_{H^{r'+1}},
\end{aligned}
\end{equation}
and it remains to estimate the first term on the right-hand side.

Recalling \eqref{forB}, we infer from \eqref{reg:alphas}   (with $r=r'$ therein), Lemma~\ref{L:B3p},  Lemma~\ref{L:AB2}, and Lemma~\ref{L:localpha}   that
\begin{equation}\label{uta2}
\begin{aligned}
&\Big\|\frac{\pi_j^\e}{\rho_\theta} \bB(\rho_\theta)[\alpha_1(\rho_\theta)[h]]+\frac{1}{\rho_\theta}(\tau_j^\e)H[\pi_j^\e \alpha_1(\rho_\theta)[h]]\Big\|_{H^{r-1}}\\
&\leq (\mu/(2C_0))\|\pi^\e_j \alpha_1(\rho_\theta)[h]]\|_{H^{r-1}}+K\|\alpha_1(\rho_\theta)[h]]\|_{H^{r'-1}}\\
&\leq (\mu/2)\|\pi^\e_j h\|_{H^{r+1}}+K\|h\|_{H^{r'+1}}
\end{aligned}
\end{equation}
provided that $\e\in(0,1)$ is sufficiently small,  where $C_0>0$ stems from \eqref{localph0}.

Let $C_1>0$ be chosen such that $\|u \rho_\theta \|_{H^{r-1}}\leq C_1\|u\|_{H^{r-1}}$ for all $u\in H^{r-1}(\bT)$ and~$\theta\in[0,1]$.
   Lemma~\ref{L:localpha}~(ii) ensures   for~$\e\in(0,1)$ sufficiently small that
\begin{equation}\label{uta3}
\begin{aligned}
&\Big\|\frac{1}{\rho_\theta}(\tau_j^\e)H[\pi_j^\e \alpha_1(\rho_\theta )[h]]+\omega^{-3}_{\rho_\theta}(\tau_j^\e)H[(\pi_j^\e h)'']\Big\|_{H^{r-1}}\\
&\leq C_1\Big\|\pi^\e_j \alpha_1(\rho_\theta )[h]] +\frac{\rho_\theta }{\omega_{\tau\rho}^3}(\tau_j^\e)(\pi_j^\e h)''\Big\|_{H^{r-1}} \\
&\leq (\mu/2)\|\pi^\e_j h\|_{H^{r+1}}+K\|h\|_{H^{r'+1}}.
\end{aligned}
\end{equation}
Gathering \eqref{uta1}-\eqref{uta3}, we  conclude \eqref{localeq} and the proof is complete.

\end{proof}

We next provide the proof of Proposition~\ref{Prop:4}.

\begin{proof}[Proof of Proposition~\ref{Prop:4}] 
Since $\Phi_2(\rho)\in\kL(H^{r+1}(\bT),H^{r-1}(\bT))$  due to \eqref{reg:Phis} and in view of  \cite[I. Theorem 1.3.1]{LQPP}, 
we may treat the operator $\Phi_2(\rho)$ as a lower order perturbation of~$\Phi_1(\rho)$ and 
therefore it suffices to prove that the principal part~$\Phi_1(\rho)$ generates  a strongly continuous  analytic semigroup in~$H^{r-2}(\bT)$.
To establish this property we note that, since $\rho\in\cV_r$, there is a constant~${C=C(\rho)>1}$ such  that 
$\omega_{\rho_\theta}^{-3}(x)\in[C^{-1},C]$ for all~$x\in\R$ and~$\theta\in[0,1]$, where we set  again ~$\rho_\theta:=\theta\rho+1-\theta$.
The formula~\eqref{phi1fs} and standard Fourier analysis enable us now to conclude that there exists a constant~${\kappa_0\geq 1}$ such that 
for all $a\in[C^{-1},C]$, $\lambda\in\mathbb{C}$ with $\re\lambda\geq 1$, and $h\in H^{r+1}(\bT)$ we have
\begin{equation}\label{defk0}
\kappa_0\|(\lambda- a\Phi_1(1))[h]\|_{H^{r-2}}\geq  |\lambda|\,\|h\|_{H^{r-2}}+\|h\|_{H^{r+1}}.
\end{equation}
Fix $r'\in(3/2,r)$. Lemma~\ref{L:loc} with $\mu=(2\kappa_0)^{-1}$ then ensures  there exists $\e\in(0,1)$ and a constant~${K=K(\e)>0}$
 such that, for all $0\leq j\leq q(\e)$, $\theta\in[0,1]$, and $ h\in H^{r-1}(\bT)$, 
\begin{equation*}
\kappa_0\|\pi^\e_j\Phi_1(\rho_\theta)[h]-\omega^{-3}_{\rho_\theta}(\tau_j^\e)\Phi_1(1)[\pi^\e_j h]\|_{H^{r-2}}\leq 2^{-1}\|\pi^\e_j h\|_{H^{r+1}}+\kappa_0K\|h\|_{H^{r'+1}}.
\end{equation*} 
This estimate and~\eqref{defk0} imply for $\lambda\in\mathbb{C}$ with $\re\lambda\geq 1$, $0\leq j\leq q(\e)$, $\theta\in[0,1]$, and $h\in H^{r+1}(\bT)$ that
\begin{align*}
\kappa_0\|\pi_j^\e(\lambda-\Phi_1(\rho_\theta))[h]\|_{H^{r-2}}&\geq \kappa_0\|(\lambda- \omega^{-3}_{\rho_\theta}(\tau_j^\e)\Phi_1(1))[\pi_j^\e h]\|_{H^{r-2}}\\
&\quad- \kappa_0\|\pi^\e_j\Phi_1(\rho_\theta)[ h]-\omega^{-3}_{\rho_\theta}(\tau_j^\e)\Phi_1(1)[\pi^\e_j h]\|_{H^{r-2}}\\
&\geq  |\lambda|\,\|\pi_j^\e h\|_{H^{r-2}}+2^{-1}\|\pi_j^\e h\|_{H^{r+1}}-\kappa_0K\|h\|_{H^{r'+1}}.
\end{align*} 
Summing up over $0\leq j\leq q(\e)$ and using \eqref{IP}, \eqref{eqnorm}, and Young's inequality we conclude that there exist constants $\omega>0$ and $\kappa\geq 1$ such that for all~$\lambda\in\mathbb{C}$ with 
$\re\lambda\geq \omega$,  $\theta\in [0,1]$, and $h\in H^{r+1}(\bT)$
\begin{align}\label{estres}
\kappa\|(\lambda-\Phi_1(\rho_\theta))[h]\|_{H^{r-2}}\geq  |\lambda|\,\| h\|_{H^{r-2}}+\| h\|_{H^{r+1}}.
\end{align} 
 Since $\omega-\Phi_1(1)\in\kL(H^{r+1}(\bT), H^{r-2}(\bT))$ is invertible  by \eqref{phi1fs}, the method of continuity together with~\eqref{estres} implies that  $\omega-\Phi_1(\rho)\in\kL(H^{r+1}(\bT), H^{r-2}(\bT))$  is  also invertible.
 Consequently, it follows from \cite[I.Theorem~1.2.2]{LQPP} and~\eqref{estres} (with $\theta=1$)  that $\Phi(\rho)\in\kH(H^{r+1}(\bT),H^{r-2}(\bT))$ as claimed. 
\end{proof}

We conclude this section with the proof of Theorem~\ref{MT1}.

\begin{proof}[Proof of Theorem~\ref{MT1}] Let $3/2<r<\bar r<2$.
As shown in Section~\ref{SEC:2}, the Hele-Shaw problem~\eqref{PB} is equivalent to the quasilinear problem~\eqref{QPP}, where $\Phi:\cV_r \to \kH(H^{r+1}(\bT), H^{r-2}(\bT))$ is  smooth according to \eqref{reg:Phi} and  Proposition~\ref{Prop:4}.
Recalling \eqref{IP}, we may thus apply the quasilinear parabolic theory from~\cite[Section 12]{Am93} (see also~\cite[Theorem~1.1 and Remark~1.2]{MW20})  and conclude that problem~\eqref{QPP} has for each $\rho_0\in\cV_{\bar r}$ 
a unique maximal classical solution $\rho=\rho(\cdot;\rho_0)$ with maximal existence time $T^+=T^+(\rho_0)$ that satisfies \eqref{prop1} and \eqref{prop1'}.
Moreover, the solution map $[(t,\rho_0)\mapsto\rho(t;\rho_0)]$ is a semiflow on~$\cV_{\bar r}$ which is smooth in the open set
defined in~\eqref{openset}.

It remains to prove the parabolic smoothing property \eqref{prop3},  which together with Proposition~\ref{Prop:1} immediately yields~\eqref{prop2}. 
To this end, we employ a parameter trick  that is used in other contexts as well; see \cite{An90, ES96, PSS15}. 
A direct application to solutions with initial data in $\mathcal{V}_{\bar r}$ is not possible, since the space~${\mathrm{C}^{\eta}([0,T^+), H^r(\mathbb{T}))}$, with $\eta\in(0,1)$,
 is not invariant under the scaling introduced in~\eqref{scaling}. 
 We therefore proceed in three steps:  
\begin{itemize}
    \item[(I)]  We construct more regular  maximal solutions $\wt\rho(\cdot;\rho_0)$ starting at~${\rho_0\in\mathcal{V}_{r+1}}$ (with a stronger uniqueness criterion). 
        \item[(II)] We prove that (the maximal existence time of) $\wt\rho(\cdot;\rho_0)$ and $\rho(\cdot;\rho_0)$ coincide. 
    \item[(III)] We establish the smoothing property \eqref{prop2} for  $ \rho(\cdot;\rho_0)$ with ${\rho_0\in\mathcal{V}_{r+1}}$.  
\end{itemize}\medskip

\noindent{ Regarding (I)}: Since  \eqref{reg:Phi} and Proposition~\ref{Prop:4} with $r=2$ imply that~$\Phi:\cV_2 \to \kH(H^{3}(\bT), L_2(\bT))$ is smooth,  
\cite[Theorem~1.1 and Remark~1.2]{MW20} guarantee, for each $\rho_0\in\cV_{r+1}$,  
that problem~\eqref{QPP} has a unique maximal classical solution $\wt\rho=\wt\rho(\cdot;\rho_0)$ such that 
\begin{equation}\label{prop12}
\wt \rho\in \mathrm{C}([0,\wt T^+),\cV_{r+1})\cap \mathrm{C}((0,\wt T^+), H^{3}(\bT)) \cap \mathrm{C}^1((0,\wt T^+), L_2(\bT)),
\end{equation}
and
\begin{equation}\label{prop1'2}
\wt\rho\in \mathrm{C}^{\alpha}([0,\wt T^+), H^2(\bT))\qquad\text{for some $\alpha\in(0,1)$,}
\end{equation}
with maximal existence time $\wt T^+=\wt T^+(\rho_0)\leq T^+(\rho_0)$ (and obviously $\wt\rho=\rho$ on $[0,\wt T^+)$).
We can improve the uniqueness statement and show that the solution is unique within the regularity class~\eqref{prop12}.
Indeed,  let $T<\wt T^+$ be fixed arbitrarily. 
Since $\wt \rho \in \mathrm{C}([0,T], \mathcal{V}_{r+1})$, \eqref{reg:fk} yields
\[
\kappa(\wt\rho)[\wt\rho] \in \mathrm{C}([0,T], H^{r-1}(\mathbb{T})), \qquad \lambda(\wt\rho)[\wt\rho] \in \mathrm{C}([0,T], {H}^{r-1}_0(\mathbb{T})),
\]
and, by~\eqref{reg:Ds}, we get   $\alpha_i(\wt\rho)[\wt\rho] \in \mathrm{C}([0,T],  {H}^{r-1}(\mathbb{T}))$, $i = 1,2$.
Using~\eqref{reg:Bs} and~\eqref{Phi1}-\eqref{Phi2}, we conclude
\[
\frac{{\rm d} \wt \rho}{{\rm d}t} \in \mathrm{C}([0,T], {H}^{r-2}(\mathbb{T}))
\]
and thus, by the fundamental theorem of calculus and~\eqref{IP}, we have $\wt \rho \in \mathrm{C}^\alpha([0,T], H^2(\mathbb{T}))$ with~${\alpha = (r-1)/3}$, 
which proves  in view of \cite[Remark~1.2~(ii)]{MW20} the uniqueness claim.
\medskip

\noindent{Regarding (II):} We only need to show that   $\wt T^+=T^+$. 
To this end, we assume by contradiction that~$\wt T^+<T^+$. Then, since $\rho\in {\rm C}((0,\wt T^+], \cV_{r+1})$ by \eqref{prop1}, 
  \cite[Proposition 2.1]{MW20} together with the arguments leading to the improved uniqueness claim in the previous step ensures that there exist constants~${\e>0}$ and~$\delta>0$ such that for all $\rho_*\in \cV_{r+1}$ with $\|\rho_*-\rho(\wt T^+)\|_{H^{r+1}}<\e$, the evolution problem
\[
\cfrac{{\rm d}\rho}{{\rm d}t}(t)=\Phi(\rho(t))[\rho(t)],\quad t>0,\qquad \rho(0)=\rho_*,
\]
has a unique classical solution $\bar\rho=\bar\rho(\cdot;\rho_*):[0,\delta]\to \cV_{r+1}$ enjoying~\eqref{prop12} with  $\wt T^+$ replaced by $\delta$.
Choosing $t_0<\wt T^+$ such that $\|\rho(t_0)-\rho(\wt T^+)\|_{H^{r+1}}<\e$ and $t_0+\delta>\wt T^+$, we conclude that the map
\[
t\mapsto
\left\{
\arraycolsep=1.4pt
\begin{array}{llll}
\wt \rho(t;\rho_0),&& 0\leq t<\wt T^+,\\[1ex]
\bar\rho(t-t_0;\rho(t_0)),&& t_0\leq t< t_0+\delta , 
\end{array}
\right.
\]
is a solution to \eqref{QPP} satisfying~\eqref{prop12} on $[0,t_0+\delta)$ with $t_0+\delta>\wt T^+$, which contradicts the maximality property of $\wt \rho(\cdot;\rho_0)$ and thus~$\wt T^+=T^+$.
 
  \medskip

\noindent{Regarding (III):} We  prove that   $\rho(\cdot;\rho_0)\in \mathrm{C}^\infty((0, T^+)\times\bT)$ for each $\rho_0\in\cV_{r+1}$.
To this end, we define for each~$\lambda=(\lambda_1,\lambda_2)\in(0,\infty)^2$   the function $\rho_\lambda$ by
\begin{equation}\label{scaling}
\rho_\lambda(t)(\tau):=\rho(\lambda_1 t)(\tau+\lambda_2t),\qquad\tau\in\R,\ 0\leq t<T_\lambda:=T^+/\lambda_1,
\end{equation}
which  satisfies
$$
\rho_\lambda\in \mathrm{C}([0, T_\lambda),\cV_{r+1})\cap \mathrm{C}((0, T_\lambda), H^{3}(\bT)) \cap \mathrm{C}^1((0, T_\lambda), L_2(\bT)).
$$
We next  introduce the function $u:[0,T_\lambda)\to(0,\infty)^2\times\cV_{r+1}$  by 
\[
u(t):=(\lambda_1,\lambda_2,\rho_\lambda(t)).
\]
 It is straightforward to prove that $u$ solves the quasilinear problem 
\begin{equation}\label{PDQPP}
\cfrac{{\rm d}u}{{\rm d}t}(t)=\Psi(u(t))[u(t)] ,\quad t>0,\qquad u(0)=(\lambda,\rho_0),
\end{equation}
where $\Psi:(0,\infty)^2\times\cV_2\to\kL(\R^2\times H^{3}(\bT),\R^2\times L_2(\bT))$ is defined by
\[
\Psi(u)[v]=(0,\lambda_1\Phi(\rho)[h]+\lambda_2h'),\qquad u=(\lambda,\rho),\, v=(\mu, h).
\]
Then,~\eqref{reg:Phi} (with $r=2$ therein) implies that $\Psi\in {\rm C}^\infty((0,\infty)^2\times\cV_2,\kL(\R^2\times H^{3}(\bT),\R^2\times L_2(\bT)))$. 
Since 
\[
 \Psi(u)=
 \begin{pmatrix}
 0&0\\[1ex]
 0&\lambda_1\Phi(\rho)+\lambda_2\cfrac{{\rm d}}{{\rm d}x}
 \end{pmatrix},\qquad u=(\lambda,\rho)\in (0,\infty)^2\times\cV_2,
\]
where ${\rm d}/{\rm d}x$ is a  lower order perturbation of $\lambda_1\Phi(\rho)$,  Proposition~\ref{Prop:4} (with $r=2$ therein) and \cite[I.Theorem~1.6.1]{Am93} ensure that the operator $\Psi(u)$ generates a strongly continuous and analytic semigroup
in $\R^2\times L_2(\bT)$ for each $u\in (0,\infty)\times \cV_2$. 
Arguing as in part (I) of the proof, we deduce that problem~\eqref{PDQPP} has for each~$u_0=(\lambda,\rho_0)\in (0,\infty)^2\times \cV_{r+1}$ a unique maximal solution $u=u(\cdot;u_0)$ with $u=(\lambda,\bar\rho)$ and 
\begin{equation}\label{propbr}
\bar \rho\in \mathrm{C}([0,t^+),\cV_{r+1})\cap \mathrm{C}((0,t^+), H^{3}(\bT)) \cap \mathrm{C}^1((0,t^+), L_2(\bT)),
\end{equation}
where $t^+=t^+(u_0)$  is  the maximal existence time of $u$.
Since the problems~\eqref{QPP} and \eqref{PDQPP} are equivalent we actually have  $t^+=T^+/\lambda_1$ and $\bar\rho=\rho_\lambda$.
Moreover, the set
\[
\mathcal{D}:=\{(t,u_0)\,:\, u_0\in(0,\infty)^2\times\cV_{r+1},\, 0<t<t^+(u_0)\}
\]
is an open subset of $\R^2\times \cV_{r+1}$ and the solution map $[(t,u_0)\mapsto u(t;u_0)]:\mathcal{D}\to\R^2\times\cV_{r+1}$ is smooth.

Let now $(t_0,\tau_0)\in (0,T^+)\times\R$ be fixed and choose $\e>0$ such that for all $ \lambda $  belonging to the ball~$B_\e((1,1))$  in $\R^2$  centered at $(1,1)$ of radius $\e$ we have $t_0<T^+/\lambda_1$.
This implies in particular that 
$\{t_0\}\times B_\e((1,1))\times\{\rho_0\}\subset \mathcal{D},$
the function $[\lambda\mapsto \rho_\lambda(t_0;\rho_0)]:B_\e((1,1))\to \cV_{r+1}$ being smooth.
Since $[\rho\mapsto\rho(\tau_0-t_0)]\in\kL(H^{r+1}(\bT),\R) $, we conclude that 
\begin{equation}\label{fun1}
[\lambda\mapsto \rho(\lambda_1 t_0)(\tau_0-t_0+\lambda_2t_0)]\in{\rm C}^\infty(B_\e((1,1)),\R).
\end{equation}
Let now $\delta>0$ be chosen such the (smooth) mapping $f:B_\delta((t_0,\tau_0))\to B_\e((1,1))$ with 
\begin{equation}\label{fun2}
f(t,\tau):=\Big(\frac{t}{t_0},\frac{\tau-\tau_0+t_0}{t_0}\Big).
\end{equation}
is well-defined. Composing  the functions defined in~\eqref{fun1} and \eqref{fun2}, we deduce that the mapping~$[(t,\tau)\mapsto \rho(t)(\tau)]$ belongs to $ {\rm C}^\infty(B_\delta((t_0,\tau_0)))$, which completes the proof.
\end{proof}

\section{Proof of Theorem~\ref{MT2}}\label{SEC:4}

In this section we establish the proof of Theorem~\ref{MT2}. To this end, we fix $\bar r\in(3/2,2)$ and chose some $r\in (3/2,\bar r)$.

 First note that the stationary solutions to \eqref{PB} are smooth functions~$\rho\in \cV_{\bar r}$ having the property that~$\G_\rho$ is a circle.
Indeed, by Theorem~\ref{MT1} every stationary solution to \eqref{PB} belongs to~${\rm C}^\infty(\bT)$.
Moreover, recalling~\eqref{partint} and using Lemma~\ref{L:1} and Lemma~\ref{L:2}, we deduce for $\rho\in{\rm C}^\infty(\bT)$ that 
\begin{equation}\label{PhiBeta}
\Phi(\rho)[\rho]=-\frac{1}{\rho}\bB(\rho)^*[\beta'],
\end{equation}
where $\beta'$  is the unique solution to
\begin{equation}\label{identww}
(1-\bD(\rho)^*)[\beta']=(\kappa(\rho)[\rho]+f(\rho))'.
\end{equation}
But, if $\rho$ is a stationary solution,   then $\Phi(\rho)[\rho]=0$ so that \eqref{est2222}  and \eqref{PhiBeta} ensure that $\beta'=0$. 
Recalling~\eqref{darcru}, we deduce  from  \eqref{identww} that the curvature of $\G_\rho$ is constant, meaning that $\G_\rho$ is indeed a circle.\\

Next, recall from \eqref{lol1}-\eqref{lol2} that the evolution preserves the area and the center of mass of the fluid domain. Focusing on the unit circle (corresponding to $\rho=1$), we thus investigate its stability properties for (small) perturbations with area $\pi$ and center of mass at the origin as stated in \eqref{inidatacoo}. Observe that
\begin{equation*}
\Phi(1)=H\circ\Big(\frac{{\rm d}^3}{{\rm d}\tau^3}+\frac{{\rm d}}{{\rm d}\tau}\Big)
\end{equation*}
is a Fourier multiplier with symbol $(|k|(1-|k|^2))_{k\in\Z}$  and its spectrum~$\sigma(\Phi(1))$ consists of isolated eigenvalues, being given by
\begin{equation*}
\sigma(\Phi(1))=\{|k|(1-|k|^2):k\in\Z\}.
\end{equation*}
Thus, all eigenvalues $\lambda$ of $\Phi(1)$ satisfy $\lambda\leq -6$, except for the eigenvalue $0$ which has multiplicity~$3$, and  consequently, the principle of linearized stability  \cite[Theorem 1.3]{MW20} cannot be applied in this context.
Instead, we will use Theorem~\ref{MT3}  to prove Theorem~\ref{MT2}, which requires some preparation.
Since we restrict to solutions satisfying~\eqref{inidatacoo},
 we observe that  if $\rho\in\cV_r$ is such that  $\Omega_\rho$ has area equal to $\pi$ and center of mass at $(0,0)$, then
\begin{equation}\label{eqcons}
0=\int_{-\pi}^\pi(\rho^2-1)\, {\rm d}s=\int_{-\pi}^\pi\rho^3\cos\, {\rm d}s=\int_{-\pi}^\pi\rho^3\sin\, {\rm d}s.
\end{equation}
Following \eqref{eqcons}, we deduce from \eqref{lol1}-\eqref{lol2} and~\eqref{PhiBeta} for all positive functions $ \rho\in {\rm C}^\infty(\bT)$ that
\begin{equation}\label{denti}
0=\int_{-\pi}^\pi \rho\Phi(\rho)[\rho]\, {\rm d}s =\int_{-\pi}^\pi \rho^2\Phi(\rho)[\rho]\cos\, {\rm d}s =\int_{-\pi}^\pi \rho^2\Phi(\rho)[\rho]\sin\, {\rm d}s. 
\end{equation} 
It is not possible to incorporate all the properties \eqref{eqcons} and \eqref{denti} into the  domain of definition and the target space of the operator $\Phi$, 
respectively, in order to eliminate the eigenvalue~${\lambda=0}$ from  the spectrum~$\sigma(\Phi(1))$.
This is due to the fact that  these are nonlinear properties in~$\rho$, two constraints in~\eqref{eqcons}   being cubic and one quadratic.
 Nevertheless, the cubic constraints in \eqref{eqcons} (which fix the center of mass at $(0,0)$) and the corresponding  integral identities  in \eqref{denti} { may be included to reformulate~\eqref{PB}  subject to~\eqref{eqcons}
 as a quasilinear problem to which the generalized principle of linearized stability in interpolation spaces  provided in Theorem~\ref{MT3} applies.  
More precisely,   introducing the new variable
\[
v:=\rho^3
\]
the evolution problem~\eqref{QPP} is equivalent to 
\begin{equation}\label{QPPu}
\cfrac{{\rm d}v}{{\rm d}t}(t)=\Psi(v(t))[v(t)],\quad t>0,\qquad v(0)=v_0,
\end{equation}
where the operator $\Psi$ is defined as follows.
Set 
\[
\wh{H}^s(\bT):=\{v\in H^{s}(\bT)\,:\, \langle v,\cos\rangle=\langle v,\sin\rangle=0\},\qquad s\in\R,
\]
and $\wh{\cV}_r:=\cV_r\cap \wh{H}^r(\bT)$. 
Observe that the mapping $[\rho\mapsto \rho^3]:\cV_s\to\cV_s$ is a smooth diffeomorphism for~$s>3/2$.  Hence, if $\rho\in\cV_{r+1}$  is such that the center of mass of $\Omega_\rho $ 
is the point $(0,0)$, then~$v\in\wh\cV_{r+1}$ by \eqref{eqcons} and, recalling \eqref{reg:Phi} and \eqref{denti}, it also holds that $\rho^2\Phi(\rho)[\rho] \in \wh{H}^{r-2}(\bT)$.
Thus, the maximal solution  $\rho=\rho(\cdot;\rho_0):[0,T^+)\to\cV_{\bar r}$  to problem~\eqref{QPP} determined by~${\rho_0\in\cV_{\bar r}}$   satisfying \eqref{inidatacoo} has the property that $v:=\rho^3:[0,T^+)\to\wh \cV_{\bar r}$ satisfies $v(0)=\rho_0^3=:v_0$  and
\[
\cfrac{{\rm d}v}{{\rm d}t}(t)=3v^{2/3}(t)\Phi\big(v^{1/3}(t)\big)\big[v^{1/3}(t)\big]\in \wh{H}^{r-2}(\bT)\quad  \text{for $t>0$}.
\]
To introduce the operator $\Psi:\wh{\cV}_r\to\kL(\wh H^{r+1}(\bT), \wh H^{r-2}(\bT))$  satisfying
\begin{equation}\label{traequa}
\Psi(v)[v]=3  v^{2/3}\Phi\big(v^{1/3}\big)\big[v^{1/3} \big]\quad \text{for $v\in\wh \cV_{r+1}$},
\end{equation}  
we first note from \eqref{krhof}, that 
\begin{equation}\label{kapatrans}
\kappa\big(v^{1/3}\big)\big[v^{1/3}\big]=\wh \kappa(v)[v] \qquad\text{and}\qquad (f\big(v^{1/3}\big))'=\lambda\big(v^{1/3}\big)\big[v^{1/3}\big]=\wh \lambda(v)[v],
\end{equation}
where 
\begin{equation}\label{regwhkf}
\wh\kappa\in{\rm C}^\infty(\wh \cV_r, \kL(\wh H^{r+1}(\bT),   H^{r-1}(\bT)))\qquad\text{and}\qquad \wh\lambda\in{\rm C}^\infty(\wh \cV_r, \kL(\wh H^{r+1}(\bT),     H^{r-1}_0(\bT)))
\end{equation}
  are given by 
\[
\wh \kappa(v)[w]:=- \frac{ 9w''-6v^{-1}v'w'}{v^{4/3}(9+ v^{-2}v'^2)^{3/2}} 
\]
and 
\begin{align*}
\wh \lambda(v)[w]&:=  \frac{ 27v'w''-54v^{-1}v'^2w'-81vw'-6v^{-2}v'^3w''+4v^{-3}v'^4w'}{v^{7/3}(9+ v^{-2}v'^2)^{5/2}} \\
&\qquad-\Big\langle\frac{ 27v'w''-54v^{-1}v'^2w'-81vw'-6v^{-2}v'^3w''+4v^{-3}v'^4w'}{v^{7/3}(9+ v^{-2}v'^2)^{5/2}}\Big\rangle.
\end{align*}
Recalling the formulas~\eqref{alpha1}-\eqref{alpha2}, we  obtain with \eqref{kapatrans} that 
\begin{align*}
\alpha_1\big(v^{1/3}\big)\big[v^{1/3}\big]&=(1+\bD(v^{1/3}))^{-1}\big[\kappa\big(v^{1/3}\big)\big[v^{1/3}\big]\big]=(1+\bD(v^{1/3}))^{-1}[\wh \kappa(v)[v]],\\
\alpha_2\big(v^{1/3}\big)\big[v^{1/3}\big]&=(1-\bD(v^{1/3})^*)^{-1}\big[\lambda\big(v^{1/3}\big)\big[v^{1/3}\big]\big]=(1-\bD(v^{1/3})^*)^{-1}[\wh \lambda(v)[v]], 
\end{align*}
and therefore we define
\begin{align*}
\wh\alpha_1(v)[w]&:=(1+\bD(v^{1/3}))^{-1}[\wh \kappa(v)[w]],\\
\wh \alpha_2(v)[w]&:=(1-\bD(v^{1/3})^*)^{-1}[\wh \lambda(v)[w]].
\end{align*}
 Note from  \eqref{reg:Ds}, \eqref{regwhkf}, and Proposition~\ref{Prop:3} that  
\begin{equation}\label{reg:whalphas}
\wh\alpha_1\in{\rm C}^\infty(\wh \cV_r, \kL(\wh H^{ r+1}(\bT),H^{r-1}(\bT))) \quad\text{and}\quad \wh \alpha_2\in{\rm C}^\infty(\wh \cV_r, \kL(\wh H^{ r+1}(\bT),  H^{ r-1}_0(\bT)))
\end{equation}
with 
\begin{equation}\label{idenalpha}
\alpha_i\big(v^{1/3}\big)\big[v^{1/3}\big]=\wh \alpha_i(v)[v],\qquad i=1,\, 2, \quad v\in \wh \cV_{r}.
\end{equation}
Setting
\begin{align}
\Psi(v)[w]:=3v^{1/3}\big\{\big( \bB(v^{1/3})[\wh \alpha_1(v)[w]] \big)'- \bB(v^{1/3})^*[\wh \alpha_2(v)[w]]\big\},\label{Psi2}
\end{align}
 we infer from \eqref{Phi1}-\eqref{Phi2} and \eqref{idenalpha}-\eqref{Psi2} that the identity \eqref{traequa} is satisfied and moreover 
\begin{equation}\label{regPsi}
\Psi\in{\rm C}^\infty(\wh{\cV}_r,\kL(\wh H^{r+1}(\bT), \wh H^{r-2}(\bT))), 
\end{equation}
 by \eqref{reg:Bs} and~ \eqref{reg:whalphas}. Thus, the problems~\eqref{QPP} and \eqref{QPPu} are indeed  equivalent.
Let us further note that
\begin{subequations}\label{fosymbs}
\begin{equation}
\Psi(1)=H\circ\Big(\frac{{\rm d}^3}{{\rm d}\tau^3}+\frac{{\rm d}}{{\rm d}\tau}\Big)\in\kL(\wh H^{r+1}(\bT), \wh H^{r-2}(\bT))
\end{equation}
is the Fourier multiplier with spectrum
\begin{equation}
\sigma(\Psi(1))=\{|k|(1-|k|^2):k\in\Z\setminus\{\pm 1\}\}.
\end{equation}
\end{subequations}
Furthermore, in view of \eqref{traequa}, the stationary solutions to \eqref{QPPu} are smooth functions~${v\in\wh \cV_{\bar r+1}}$ having the property that  $\Omega_{v^{1/3}}$ is a circle.
Since $\Omega_{v^{1/3}}$ has center of mass located at $(0,0)$, it follows that  $v$ is a positive constant.

In the following we identify 
 \[
\wh H^s(\bT)=  \wh  H_0^s(\bT)\times \R,\qquad s\in\R,
 \]
where
\[
\wh H_0^s(\bT):=\{v\in H^{s}(\bT)\,:\, \langle v,1\rangle=\langle v,\cos\rangle=\langle v,\sin\rangle=0\},\qquad s\in\R.
\]
Hence, we may represent any $v\in \wh H^s(\bT)$ as a pair $v=(v_1,v_2)$, where $v_2:=(2\pi)^{-1}\langle v,1\rangle\in\R$ and~${v_1:=v-v_2\in \wh H_0^s(\bT)}$.
Consequently, if  $\rho=\rho(\cdot;\rho_0):[0,T^+)\to\cV_{\bar r}$ is the maximal solution  to problem~\eqref{QPP} determined by~${\rho_0\in\cV_{\bar r}}$  satisfying~\eqref{inidatacoo} and $v=(v_1,v_2)=\rho^3$, 
 then the relation 
 \[
 \Psi(u)[u+c]= \Psi(u)[u],\qquad c\in\R,\quad u\in\wh{\cV}_{r+1},
 \] 
 implies that 
\[
\cfrac{{\rm d}(v_2-1)}{{\rm d}t}(t)=(2\pi)^{-1}\langle\Psi(v)[v]\rangle =(2\pi)^{-1}\langle\Psi(v)[(v_1,0)]\rangle
\]
and 
\[
\cfrac{{\rm d}v_1}{{\rm d}t}(t)=\Psi(v)[v]-(2\pi)^{-1}\langle\Psi(v)[v]\rangle =\Psi(v)[(v_1,0)]-(2\pi)^{-1}\langle\Psi(v)[(v_1,0)]\rangle
\]
for $t\in(0,T^+)$.
Hence, setting $u:=v-(0,1)$, problem \eqref{QPPu}  is equivalent to  the quasilinear evolution problem
\begin{equation}\label{QPPuu}
\cfrac{{\rm d}u}{{\rm d}t}(t)=A(u(t))[u(t)],\quad t>0,\qquad u(0)=u_0,
\end{equation}
where 
\[
A(u)=\begin{bmatrix}
A_1(u)&0\\
A_2(u)&0
\end{bmatrix}\in \kL( \wh H_0^{r+1}(\bT)\times\R,  \wh H_0^{r-2}(\bT)\times\R) 
\]
has entries
\begin{equation}\label{defA}
\begin{aligned}
&A_1(u) :=\Psi(u+(0,1)) - (2\pi)^{-1}\langle\Psi(u+(0,1))[\cdot]\rangle  \in\kL(\wh H_0^{r+1}(\bT), \wh  H_0^{r-2}(\bT)),\\
&  A_2(u) :=  (2\pi)^{-1}\langle\Psi(u+(0,1))[\cdot]\rangle \in\kL(\wh H_0^{r+1}(\bT), \R).
\end{aligned}
\end{equation}
In view of~\eqref{fosymbs}, the operator $A_1(0)\in \kL(\wh H_0^{ r+1}(\bT),\wh H_0^{ r-2}(\bT)))$ has spectrum
\[
\sigma(A_1(0))=\{|k|(1-|k|^2):k\in\Z\setminus\{0,\pm 1\}\}\subset\{z\in\mathbb{C}\,:\, \re z\leq -6\},
\]
while   $A_2(0)=0$.
Moreover,  since  $\Psi(1)\in\kH(\wh H^{r+1}(\bT),\wh H^{ r-2}(\bT)))$ by~\eqref{fosymbs} and standard Fourier analysis, we infer from  \eqref{regPsi} that  there are  an open neighborhood $O$ of 
$0$ in  $\wh H_0^{ r}(\bT)$ and an open interval~${\cI\subset\R}$ containing $0$ such that  
\begin{equation*}
A\in {\rm C}^\infty(O\times\cI, \kH(\wh H_0^{r+1}(\bT)\times\R,\wh H_0^{ r-2}(\bT)\times\R)).
\end{equation*}
Moreover the stationary solutions to \eqref{QPPuu} in $O\times\cI$ are the constant functions~$v=(0,x)$ with~$x\in\cI$.
Consequently, we have verified all the assumptions of Theorem~\ref{MT3} in the context of \eqref{QPPuu}.
 Applying this result, the claim of Theorem~\ref{MT2} follows  by recalling  \eqref{lol1}-\eqref{lol2}.

\appendix
\section{A generalized principle of linearized stability for quasilinear parabolic problems in interpolation spaces}\label{SEC:A-1}
We consider the quasilinear evolution problem
\begin{equation}\label{StQPP}
\cfrac{{\rm d}u}{{\rm d}t}(t)=A(u(t))[u(t)],\quad t>0,\qquad u(0)=u_0,
\end{equation}
in the following analytic setting.
Let   $E_0$ and $E_1$ be Banach spaces  over $\mathbb{K}\in \{\R,\mathbb{C}\}$ with   continuous  and dense embedding 
$E_1\hookrightarrow E_0$.
For each $\theta\in (0,1)$, we denote by $(\cdot,\cdot)_\theta$   an arbitrary admissible interpolation functor of 
 exponent~$\theta$  (see \cite[I.~Section~2.11]{LQPP}) and set~${E_\theta:= (E_0,E_1)_\theta}$  for the corresponding  interpolation space with norm~$\|\cdot\|_\theta$.
We fix  exponents
\begin{subequations}\label{ASS}
\begin{equation}\label{AS1}
 0<\beta<\alpha<1
\end{equation}
 and assume, for some open subset $O_\beta$ of $E_\beta$ containing $0$ and an open Interval $\cI\subset\R$ with~$0\in\cI$, that
 \begin{equation}\label{AS2}
A\in {\rm C}^{1-}\big(O_\beta\times\cI,\mathcal{H}(E_1\times \R,E_0\times\R) \big)\,.
\end{equation}
Moreover, we assume that $A(u)$ has a matrix structure of the form 
\begin{equation}\label{AS3}
A(u)=
\begin{pmatrix}
A_1(u)& 0\\
A_2(u)&0
\end{pmatrix},
\end{equation}
 and, for some  $\omega_0>0$,   
 \begin{equation}\label{AS4}
\sigma (A_1(0))\subset\{\lambda\in\mathbb{C}\,:\, \re\lambda\leq -\omega_0\} \qquad\text{and}\qquad A_2(0)=0.
\end{equation}
Finally, we assume that the stationary solutions to \eqref{StQPP}  are the constants; that is
 \begin{equation}\label{AS5}
A(u)[u]=0 \qquad\iff\qquad \text{$u=(0,x)$ with $x\in\cI$.}
\end{equation}
\end{subequations}
The local well-posedness of~\eqref{StQPP}  in $O_\alpha\times \cI$, where $O_\alpha:=O_\beta\cap E_\alpha$, is established in 
\cite{Am93} (see also \cite[Theorem 1.1]{MW20}) in a  more general setting.
Specifically, problem \eqref{StQPP} has for each $u_0\in O_\alpha\times\cI$ a unique maximal solution $u=u(\cdot;u_0)$ satisfying
\begin{subequations}\label{regu}
\begin{equation} \label{regu1}
 u\in {\rm C}^1((0,t^+(u_0)),E_0\times\R)\cap {\rm C}((0,t^+(u_0)),E_1\times\R)\cap  {\rm C} ([0,t^+(u_0)),O_\alpha\times\cI),
\end{equation}
where $ t^+(u_0)\in(0,\infty]$,   and
\begin{equation}\label{regu2}
  u(\cdot;u_0)\in {\rm C}^{\alpha-\eta}([0,t^+(u_0)),E_\eta\times\R)\,,\qquad \eta\in [0,\alpha].
\end{equation} 
\end{subequations}
The next results states that  if $u_0$ is close to $0\in\cO_\alpha\times\cI$, the solution $u(\cdot;u_0)$ exists for all times 
and converges at an exponential rate towards a  stationary solution $(0,x_*)$ to \eqref{StQPP}, where 
the choice of~$x_*$  generally depends   on the initial condition $u_0$.

\begin{thm}\label{MT3} Let $\omega\in(0,\omega_0)$ be fixed. Then, there exists constants $\e>0$ and $M\geq 1$ such that for each  $u_0\in \cO_\alpha\times \cI$ with $\|u_0\|_{E_\alpha\times\R}\leq \e$ the maximal solution $u=u(\cdot;u_0)$ is globally defined and there exists $x_*\in\cI$ such that
\begin{equation}
\|u(t)-(0,x_*)\|_{E_\alpha\times\R}\leq M e^{-\omega t}\|u_0\|_{E_\alpha\times\R},\qquad t\geq 0.
\end{equation}
\end{thm}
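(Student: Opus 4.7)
\textbf{Proof plan for Theorem \ref{MT3}.} The key observation is that the block-triangular form \eqref{AS3}--\eqref{AS4} makes the spectral decomposition of the linearization $A(0,0)$ explicit: the direction $\{(0,y) : y\in\R\}$ spans the one-dimensional kernel of $A(0,0)$ (tangent to the equilibrium manifold $\{(0,x) : x\in\cI\}$), while the complementary direction $E_1\times\{0\}$ is the exponentially stable subspace on which $A_1(0,0)$ acts. Writing $u=(v,y)$, the system \eqref{StQPP} reads
\begin{equation}\label{Psys}
\dot v = A_1(v,y)[v], \qquad \dot y = A_2(v,y)[v],
\end{equation}
and \eqref{AS2} together with $A_2(0)=0$ yields
\[
\|A_1(v,y)-A_1(0,0)\|_{\mathcal{L}(E_1,E_0)}+\|A_2(v,y)\|_{\mathcal{L}(E_1,\R)}\leq L\bigl(\|v\|_\beta+|y|\bigr)
\]
on a neighborhood of $0$. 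Since the right-hand side of the $y$-equation is quadratically small in $(v,y)$, the plan is first to propagate exponential decay of $v(t)$ in $E_\alpha$, and then to integrate the second equation of \eqref{Psys} to identify the limit $x_*$ of $y(t)$.

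\emph{Step 1 (uniform sectorial estimates).} Fix $\omega\in(0,\omega_0)$. Openness of the set of generators with a prescribed spectral gap in $\mathcal{L}(E_1,E_0)$, combined with the Lipschitz continuity of $A_1$, provides a neighborhood $\mathcal{U}\subset O_\beta\times\cI$ of $0$ and a constant $M_0\geq 1$ such that for every $w\in\mathcal{U}$ one has $\sigma(A_1(w))\subset\{\re\lambda\leq-\omega\}$ together with the uniform semigroup bounds
\[
\|e^{t A_1(w)}\|_{\mathcal{L}(E_\theta,E_\eta)} \leq M_0\,t^{\theta-\eta}\,e^{-\omega t}, \qquad t>0,\ 0\leq\theta\leq\eta\leq 1.
\]

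\emph{Step 2 (a priori estimates and global existence).} For $(v_0,y_0)$ with $\|v_0\|_\alpha+|y_0|\leq\varepsilon$ small, I introduce on $[0,t^+(u_0))$ the weighted quantity
\[
N(T):=\sup_{0<t\leq T}e^{\omega t}\bigl(\|v(t)\|_\alpha+t^{1-\alpha}\|v(t)\|_1\bigr)+\sup_{0\leq t\leq T}|y(t)|.
\]
Duhamel's formula applied to the first equation of \eqref{Psys} written as $\dot v = A_1(0,0)[v]+[A_1(v,y)-A_1(0,0)](v)$, combined with Step~1, the Lipschitz bound on the perturbation, the Beta-function identity $\int_0^t(t-s)^{-\alpha}s^{\alpha-1}\,ds=B(1-\alpha,\alpha)$ for the $E_\alpha$-part, and a convolution splitting at $s=t/2$ to cope with the non-integrable $(t-s)^{-1}$-singularity in the $E_1$-part, yields the self-consistent inequality
\[
N(T)\leq C_1\|u_0\|_{E_\alpha\times\R}+C_2\,N(T)\bigl(N(T)+\|u_0\|_{E_\alpha\times\R}\bigr),\qquad T<t^+(u_0).
\]
For $\varepsilon$ sufficiently small this forces $N(T)\leq 2C_1\|u_0\|_{E_\alpha\times\R}$ uniformly in $T$, keeps $u(t)$ inside $\mathcal{U}\cap(O_\alpha\times\cI)$, and thus, via the blow-up alternative of the local theory (see \cite[Remark~1.2]{MW20}), gives $t^+(u_0)=\infty$.

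\emph{Step 3 (selection of $x_*$ and exponential decay).} Step~2 delivers $\|v(t)\|_\alpha\leq M e^{-\omega t}\|u_0\|_{E_\alpha\times\R}$ and $\|v(t)\|_1\leq M\,t^{\alpha-1}e^{-\omega t}\|u_0\|_{E_\alpha\times\R}$ for all $t>0$. Inserting these estimates into the $y$-equation of \eqref{Psys} and using the bound on $\|A_2(v,y)\|_{\mathcal{L}(E_1,\R)}$, together with the smallness of $\|v\|_\beta$ and $|y|$ (absorbing one $\|u_0\|$ into the constant via $\|u_0\|\leq\varepsilon$), yields
\[
|\dot y(t)|\leq L\bigl(\|v(t)\|_\beta+|y(t)|\bigr)\|v(t)\|_1\leq C\,\|u_0\|_{E_\alpha\times\R}\,t^{\alpha-1}e^{-\omega t},
\]
which is integrable on $(0,\infty)$. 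Hence $x_*:=y_0+\int_0^\infty A_2(v(s),y(s))[v(s)]\,ds\in\cI$ exists, and the tail estimate $\int_t^\infty s^{\alpha-1}e^{-\omega s}\,ds\lesssim e^{-\omega t}$ (absorbing the bounded range $[0,1]$ into the constant) gives $|y(t)-x_*|\leq M'e^{-\omega t}\|u_0\|_{E_\alpha\times\R}$. Combined with the decay of $v$, this yields the stated conclusion.

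\emph{Main obstacle.} The delicate step is closing the a priori bound on the weighted $E_1$-seminorm in Step~2: the natural Duhamel estimate fails because $\|e^{(t-s)A_1(0,0)}\|_{\mathcal{L}(E_0,E_1)}\lesssim(t-s)^{-1}$ is not integrable against the initial smoothing factor $s^{\alpha-1}$. This is resolved by splitting the convolution at $s=t/2$: on $[0,t/2]$ one replaces $(t-s)^{-1}$ by $2/t$ and then integrates $s^{\alpha-1}$ directly, while on $[t/2,t]$ one uses that $s^{\alpha-1}\leq(t/2)^{\alpha-1}$ is a constant so that the $(t-s)^{-1}$ singularity is cancelled by an $e^{-\omega(t-s)/2}$-factor, the remaining weight being finite. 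The rest of the argument is a standard adaptation of Amann's quasilinear framework \cite{Am93}, with the matrix structure \eqref{AS3} replacing the strict spectral hypothesis of the classical principle of linearized stability.
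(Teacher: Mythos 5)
Your overall architecture (exponential decay of the first component, integration of the second equation to identify a shifted limit $x_*$, smallness bootstrap) matches the paper's, but the key analytic step is different and, as written, it fails. You close the weighted quantity $N(T)$ by Duhamel's formula around the frozen generator $A_1(0,0)$, treating $B(s):=A_1(u(s))-A_1(0,0)$ as a perturbation. Since \eqref{AS2} only gives $B(s)\in\mathcal{L}(E_1,E_0)$, the $E_1$-component of the Duhamel integral must be estimated through $\|e^{(t-s)A_1(0,0)}\|_{\mathcal{L}(E_0,E_1)}\lesssim (t-s)^{-1}e^{-c(t-s)}$, and your proposed remedy for the resulting singularity is incorrect: on $[t/2,t]$ the exponential factor does not cancel anything, because $\int_{t/2}^{t}(t-s)^{-1}e^{-c(t-s)}\,{\rm d}s=\int_{0}^{t/2}\sigma^{-1}e^{-c\sigma}\,{\rm d}\sigma=\infty$. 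This is not a removable technicality in your scheme: the weighted $E_1$-bound on $v$ is precisely what you need both to control $\|B(s)[v(s)]\|_{E_0}\le L\big(\|v(s)\|_\beta+|y(s)|\big)\|v(s)\|_1$ in the $E_\alpha$-estimate and to integrate $\dot y=A_2(u)[v]$ (since $A_2(u)\in\mathcal{L}(E_1,\R)$ only), so the self-consistent inequality for $N(T)$ cannot be closed this way. A secondary gap: concluding $t^+(u_0)=\infty$ from boundedness of $u(t)$ in $E_\alpha\times\R$ alone is not covered by the continuation criterion of \cite{MW20}; one needs relative compactness of the orbit in $O_\alpha\times\cI$, which the paper obtains from uniform continuity of $t\mapsto u(t)$ in $E_\alpha\times\R$.

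The paper avoids the Duhamel perturbation altogether. Since $u_1$ solves the \emph{linear, non-autonomous} problem $\frac{{\rm d}u_1}{{\rm d}t}=A_1(u(t))[u_1]$ exactly, one has $u_1(t)=U_{A_1(u)}(t,0)u_{0,1}$, and decay and smoothing are taken simultaneously from Amann's theory of evolution operators for operator families that are H\"older continuous in time with small H\"older seminorm (\cite{LQPP}, II.Section~5): working in the class $\mathcal{M}(T)$ of paths bounded in $E_\alpha\times\R$ and $\rho$-H\"older in $E_\beta\times\R$ with $\rho<\alpha-\beta$ and seminorm $N/L$, and choosing $N$ with $c_0(\rho)N^{1/\rho}=\delta$, one gets $\|U_{A_1(u)}(t,0)\|_{\mathcal{L}(E_\alpha)}+t^{1-\alpha}\|U_{A_1(u)}(t,0)\|_{\mathcal{L}(E_\alpha,E_1)}\lesssim e^{-\nu t}$ with $\nu>\omega$, as well as the $(\alpha-\beta)$-H\"older estimate of \cite[II.Theorem~5.3.1]{LQPP} needed to show the solution stays in $\mathcal{M}$; the bootstrap thus runs on the H\"older seminorm of $u$ in $E_\beta$, not on a weighted $E_1$-norm. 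If you wish to keep a semigroup-based argument, you must replace your splitting by something that genuinely restores integrability (for instance a restart argument exploiting the smoothing estimates of the local theory on unit time intervals); as written, Step~2 does not go through.
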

\begin{proof} We  divide the proof into three steps.\medskip

\noindent{\bf Preliminaries.}  Denote by $\sfe_{\alpha,\beta}$ the norm of the embedding $E_\alpha \hookrightarrow E_\beta$ and assume  without loss of generality that~${\sfe_{\alpha,\beta} \geq 1}$. 
 Choose  $\e_0\in(0,1]$ such that
$$\ov\bB_{E_\alpha\times\R}(0,2\e_0/\sfe_{\alpha,\beta})\subset\ov\bB_{E_\beta\times\R}(0,2\e_0)\subset \cO_\beta\times\cI.$$ 
 Further fix $\rho \in (0, \alpha - \beta)$ and $\omega \in (0, \omega_0)$, and set $4\delta := \omega_0 - \omega > 0$.
Since~$A_1(0)\in\mathcal{H}(E_1,E_0)$ by~\cite[I.Corollary 1.6.3]{LQPP},    our assumptions~\eqref{AS2} and~\eqref{AS4} together with~\cite[I.Proposition~1.4.2]{LQPP}  ensure,
after making $\e_0>0$ smaller if necessary, that  there exist constants~${\kappa\geq 1}$ and $L>0$ such that
\begin{equation}\label{qe2}
\omega_0-\delta+A_1(v)\in\mathcal{H}(E_1,E_0;\kappa,\delta),\qquad  v\in \ov\bB_{E_\beta\times\R}(0,2\e_0),
\end{equation}
and 
\begin{equation}\label{qe3}
\|A(v )-A(\bar v )\|_{\kL(E_1\times\R,E_0\times\R)}\leq L\|v-\bar v\|_{E_\beta\times\R},\qquad  v,\,\bar v \in \ov\bB_{E_\beta\times\R}(0,2\e_0).
\end{equation} 
Fix $T\in(0,\infty)$ and define
\[
\mathcal{M}(T):=\left\{ v\in {\rm C}\big([0, T],\ov\bB_{ E_\alpha\times\R}(0,2\e_0/{\sf e}_{\alpha,\beta})\big)\,:\, 
\|v(t)-v(s)\|_{E_\beta\times\R}\le \frac{N}{L }\vert t-s\vert^{\rho}\,,\, 0\le s\leq t\le T\right\},
\]
where $N>0$ is chosen as follows. 
Given $v\in\mathcal{M}(T)$, the estimates~\eqref{qe2}-\eqref{qe3} yield
\begin{subequations}\label{b}
\begin{equation}\label{b1}
\omega_0-\delta + A_1(v(t))\in \mathcal{H}(E_1,E_0,\kappa,\delta)\,,\qquad t\in [0,T],
\end{equation}
and
\begin{equation}\label{b2}
 A_1(v)\in {\rm C}^\rho\big([0,T],\mathcal{L}(E_1,E_0)\big)
\quad\text{with}\quad \sup_{0\le s<t\le T} \frac{\| A_1(v(t))-A_1(v(s))\|_{\mathcal{L}(E_1,E_0)}}{(t-s)^\rho}\le N.
\end{equation}
\end{subequations}
In view of \eqref{b} we may apply  results from \cite[II.Section~5]{LQPP} to the family~${\mathcal{A}:=\{A_1(v)\,:\,v\in\mathcal{M}(T)\}}$.
Letting~$c_0(\rho)>0$ be the constant from \cite[II.Theorem 5.1.1]{LQPP} (which is independent of $N$), we choose the constant $N>0$ such that  $c_0(\rho)N^{1/\rho}=\delta$.
 Then, by \cite[II.Theorem 5.1.1, II.Lemma 5.1.3]{LQPP} there exists for each  $v\in\mathcal{M}(T)$ a  unique  evolution operator $U_{A_1(v)}$ for $A_1(v)$ satisfying 
\begin{subequations}\label{evol}
\begin{equation}\label{ev0} 
\begin{split}
\|U_{ A_1(v)}(t,s)\|_{\mathcal{L}(E_\theta)}+ (t-s)^{\theta-\vartheta_0}\|U_{A_1(v)}(t,s)\|_{\mathcal{L}(E_\vartheta,E_\theta)}  \leq  \frac{M_1}{2}e^{-\nu (t-s)}\,,\qquad 0\le s\le t\le T,
\end{split}
\end{equation}
 where the  constant $M_1>0$ is independent of $T>0$  (but depends on $N,\,\kappa,\,\delta$, and $\rho$) and
\[
-\nu:= c_0(\rho)N^{1/\rho}-\omega_0+\delta+\delta=-\omega-\delta <-\omega<0.
\]
  The estimate \eqref{ev1} holds for  $0\le \vartheta_0\le \vartheta\le \theta\le 1$ with $\vartheta_0<\vartheta$ if $0<\vartheta< \theta< 1$.
  Moreover, we infer from \cite[II.Theorem 5.3.1]{LQPP} (with~${f=0}$  therein)  that there  exists a further constant~$M_2>0$, which is also independent of $T>0$,  such that
\begin{align}\label{ev1}
\|U_{A_1(v)}(t,0)-U_{A_1(v)}(s,0)\|_{\mathcal{L}(E_\alpha,E_\beta)}\le M_2 (t-s)^{\alpha-\beta},\qquad 0\le s\le t\le T.
\end{align}
  \end{subequations}
  
 Let~${\e\in (0,\e_0)}$ be chosen such that
\begin{subequations}\label{g5}
\begin{align}
&\frac{\e M_1}{2} \leq\frac{\e_0}{\sfe_{\alpha,\beta}},\qquad \e M_2\Big(\frac{4L}{N}\Big)^{(\alpha-\beta-\rho)/\rho}\leq    \frac{N}{2L},
\qquad  \e\Big(1+LM_1\int_0^\infty  r^{\alpha-1}e^{-\nu r}\,{\rm d}r\Big)\leq \frac{\e_0}{\sfe_{\alpha,\beta}},\label{MTp1}\\
&  \e  \alpha^{-1} LM_1 |\tau-s|^{\rho}\Big(\frac{4L }{N}\Big)^{(\alpha-\rho)/\rho}\leq  \frac{N}{2L}.\label{MTp2}
\end{align}
\end{subequations}

\noindent{\bf Global existence and uniform estimates.}
Let $ u_0=(u_{0,1}, u_{0,2})\in\ov\bB_{E_\alpha\times \R}(0,\e\big)$ be fixed 
 and let ~$u=u(\cdot;u_0)$ be the corresponding maximal solution to \eqref{StQPP}  satisfying \eqref{regu}.
Moreover, as shown in (the proof of) \cite[Proposition 2.1]{MW20}, after making $\e$ smaller if necessary, 
there are constants~$k_0\geq 1$  and~${t_0\in(0,1)}$ such that for all~$u_0\in\ov\bB_{E_\alpha\times \R}(0,\e\big)$  it holds that $t^+(u_0)\geq t_0$ and
\begin{equation}\label{qe4} 
\|u(t)\|_{E_\alpha\times\R}\leq k_0\|u_0\|_{E_\alpha\times\R}\leq k_0\e\leq 2\e_0/{\sfe_{\alpha,\beta}},\qquad 0\leq t\leq t_0,
\end{equation}  
as well as, recalling that $\rho\in(0,\alpha-\beta)$,
\begin{equation}\label{qe5}
\|u(t)-u(s)\|_{E_\beta\times\R}\leq k_0|t-s|^{\alpha-\beta}\leq \frac{N}{L}(t-s)^\rho,\qquad 0\leq s\leq t\leq t_0.
\end{equation}

We now define
\[
t_1:=\sup\big\{t<t^+(u_0)\,:\, u\vert_{[0,t]}\in\mathcal{M}(t) \big\}
\]
and infer from  \eqref{qe4}-\eqref{qe5} that $t_1\geq t_0$ for all $u_0\in\ov\bB_{E_\alpha\times \R}(0,\e\big)$.
Let $t\in(0,t_1)$ be arbitrary.
Noticing that $u_1$ solves the evolution problem
\begin{equation*}
\cfrac{{\rm d}u_1}{{\rm d}t}(t)=A_1(u(t))[u_1(t)],\quad t>0,\qquad u_1(0)=u_{0,1},
\end{equation*}
we deduce  from \eqref{regu} that 
\begin{equation}\label{foru1}
u_1(\tau)=U_{A_1(u)}(\tau,0)u_{0,1}, \qquad 0\leq \tau\leq t.
\end{equation}
Using \eqref{ev0} and recalling \eqref{MTp1}, we therefore have
\begin{equation}\label{proppp1}
\|u_1(\tau)\|_\alpha\leq \frac{M_1}{2}e^{-\nu \tau}\|u_{0,1}\|_\alpha\leq \frac{\e M_1}{2} \leq\frac{\e_0}{\sfe_{\alpha,\beta}},\qquad \tau\in[0,t],
\end{equation} 
and
\begin{equation}\label{proppp1'}
\|u_1(\tau)\|_1\leq \frac{M_1}{2}\tau^{\alpha-1}e^{-\nu \tau}\|u_{0,1}\|_\alpha \leq  \frac{M_1\e}{2}\tau^{\alpha-1}e^{-\nu \tau},\qquad \tau\in[0,t],
\end{equation} 
The  estimate~\eqref{proppp1} shows, for  $0\leq s\leq \tau \leq t$  with $|\tau-s|^\rho\geq 4\e_0 L/N$, that
 \begin{equation*}
\|u_1(\tau)-u_1(s)\|_\beta\leq \sfe_{\alpha,\beta} \|u_1(\tau)-u_1(s)\|_\alpha\leq 2\e_0=\frac{2\e_0}{|\tau-s|^\rho}|\tau-s|^\rho  \leq  \frac{N}{2L} |\tau-s|^\rho,
\end{equation*} 
while, for $|\tau-s|^\rho\leq 4\e_0 L/N$,  the inequalities \eqref{ev1} and  \eqref{MTp1} entail that
 \begin{equation*}
\|u_1(\tau)-u_1(s)\|_{\beta}\leq M_2|\tau-s|^{\alpha-\beta}\|u_{0,1}\|_\alpha\leq  \e M_2\Big(\frac{4  L }{N}\Big)^{(\alpha-\beta-\rho)/\rho}|\tau-s|^{\rho}\leq    \frac{N}{2L} |\tau-s|^\rho.
\end{equation*} 
Consequently, we have
 \begin{equation}\label{proppp2}
\|u_1(\tau)-u_1(s)\|_{\beta} \leq   \frac{N}{2L} |\tau-s|^\rho \qquad\text{for all $0\leq s\leq\tau\leq t$.}
\end{equation}

Concerning the second component $u_2$ of $u$, we infer from \eqref{StQPP}, \eqref{AS3}, \eqref{AS4},  \eqref{qe3}, \eqref{MTp1},  and~\eqref{proppp1'},  since
\[
\frac{{\rm d} u_2}{{\rm d}t}(t)=A_2(u(t))[u_1(t)]\,,\quad t\in(0,t^+(u_0)),\qquad u_2(0)=u_{0,2},
\]
that for~${\tau\in[0,t]}$ we have 
\begin{equation}\label{proppp12}
\begin{aligned}
|u_2(\tau)|&=|u_{0,2}|+\int_0^\tau\big| A_2(u(r))[u_1(r)]\big|\,{\rm d}r\leq \e+ L\int_0^\tau \|u(r)\|_{E_\beta\times\R}\|u_1(r)\|_1\,{\rm d}r\\
&\leq \e+ \e LM_1\int_0^\tau  r^{\alpha-1}e^{-\nu r}\,{\rm d}r \leq \e\Big(1+LM_1\int_0^\infty  r^{\alpha-1}e^{-\nu r}\,{\rm d}r\Big)\leq \frac{\e_0}{\sfe_{\alpha,\beta}}.
\end{aligned}
\end{equation}
 In particular, for   $0\leq s\leq \tau \leq t$  with $|\tau-s|^\rho\geq 4\e_0 L/N$, it holds that
 \begin{equation*}
|u_2(\tau)-u_2(s)|\leq 2\e_0=\frac{2\e_0}{|\tau-s|^\rho}|\tau-s|^\rho  \leq  \frac{N}{2L} |\tau-s|^\rho.
\end{equation*} 
Using \eqref{MTp2} and (the arguments used to derive) estimate \eqref{proppp12} we deduce for all $0\leq s\leq \tau\leq t$ with~${|\tau-s|^\rho\leq 4\e_0 L/N}$  that
 \begin{equation*}
\begin{aligned}
|u_2(\tau)-u_2(s)|&\leq \int_s^\tau\big| A_2(u(r))[u_1(r)]\big|\,{\rm d}r \leq  L\int_s^\tau  \|u(r)\|_{E_\beta\times\R}\|u_1(r)\|_1\,{\rm d}r\\
&\leq \e  LM_1 \int_s^\tau  r^{\alpha-1}e^{-\nu r}\,{\rm d}r\leq \e  LM_1 \int_s^\tau  r^{\alpha-1}\,{\rm d}r =\e \alpha^{-1} LM_1 (\tau^\alpha-s^\alpha)  \\
&\leq \e  \alpha^{-1} LM_1 |\tau-s|^\alpha \leq \e  \alpha^{-1} LM_1 \Big(\frac{4L }{N}\Big)^{(\alpha-\rho)/\rho} |\tau-s|^\rho\\
&\leq \frac{N}{2L} |\tau-s|^\rho ,
\end{aligned}
\end{equation*}
and therefore
 \begin{equation}\label{proppp22}
|u_2(\tau)-u_2(s)| \leq   \frac{N}{2L} |\tau-s|^\rho, \quad 0\leq s\leq\tau\leq t.
\end{equation} 
Gathering~\eqref{proppp1} and \eqref{proppp2}-\eqref{proppp22}, we conclude that $t_1=t^+(u_0)$ and  for $0\leq s\leq t<t^+(u_0)$ 
\begin{equation}\label{globest}
\|u(t)\|_{E_\alpha\times\R}\leq  \frac{2\e_0}{\sfe_{\alpha,\beta}} \qquad\text{and}\qquad \|u(t)-u(s)\|_{E_\beta\times\R}\leq  \frac{N}{L} |t-s|^\rho.
\end{equation}
These estimates directly imply that $t^+(u_0) = \infty$. Indeed, assume for contradiction, that~${t^+(u_0) < \infty}$. Then the solution $u$ can be extended as a Hölder continuous function $u : [0, t^+(u_0)] \to E_\beta \times \mathbb{R}$. In particular,~\eqref{b} remain valid with $T = t^+(u_0)$, and consequently, the evolution operator~$U_{A_1(u)}$ satisfies~\eqref{evol} for $T = t^+(u_0)$.
Formula~\eqref{foru1} now implies  that the mapping $u : [0, t^+(u_0)) \to E_\alpha \times \mathbb{R}$ is uniformly continuous. Therefore,  recalling~\eqref{globest}, 
the orbit $u([0, t^+(u_0)))$ is relatively compact in $O_\alpha \times \mathcal{I}$. Applying \cite[Theorem 1.1~(iii)]{MW20}, it follows that $t^+(u_0) = \infty$, which contradicts our assumption. 
Hence, $t^+(u_0) = \infty$ and  the estimates \eqref{globest} are valid for 
all~$0\leq s\leq t<\infty$.\medskip

\noindent{\bf Exponential stability.} Since \eqref{globest} ensure that  \eqref{b} and~\eqref{evol} hold also for $T =\infty$,
 we deduce for $0\leq t\leq T$, using~\eqref{MTp1} and~\eqref{foru1}, that
\begin{equation}\label{expu1}
e^{\omega t}\|u_1(t)\|_\alpha\leq e^{\omega t}\|U_{A_1(u)}(t,0)\|_{\kL(E_\alpha)}\|u_{1,0}\|_\alpha\leq M_1e^{(\omega-\nu)t}\|u_{1,0}\|_\alpha\leq M_1\|u_{1,0}\|_\alpha
\end{equation}
and
\begin{align*}
\|u_1(t)\|_1\leq \|U_{A_1(u)}(t,0)\|_{\kL(E_\alpha,E_1)}\|u_{1,0}\|_\alpha\leq\frac{ M_1\e}{2} t^{\alpha-1}e^{-\nu t}\leq   t^{\alpha-1}e^{-\nu t}.
\end{align*}
The latter estimate  together with \eqref{AS4}, \eqref{qe3}, \eqref{proppp1'} and \eqref{globest} yields
\begin{align*}
 \int_0^\infty |A_2(u(r))[u_1(r)]|\,{\rm d}r &\leq  L\int_0^\infty \|u(r)\|_{E_\beta\times\R}\|u_1(r)\|_1\,{\rm d}r\leq  \e LM_1\int_0^\infty  r^{\alpha-1}e^{-\nu r} \,{\rm d}r.
\end{align*}
Hence,  for $t\to\infty$,  
\[
u_2(t)\to x_*:= u_{0,2}+\int_0^\infty A_2(u(r))[u_1(r)]\,{\rm d}r,
\]
with $x_*\in\cI$  by~\eqref{MTp1}.
Moreover, arguing as in the proof of \eqref{proppp22}, we  get
 \begin{equation}\label{expu2}
\begin{aligned}
|u_2(t)-  x_*|&\leq \int_t^\infty \big|A_2(u(r))[u_1(r)]\big|\,{\rm d}r\leq L\int_t^\infty \|u(r)\|_{E_\beta\times\R}\|u_1(r)\|_1\,{\rm d}r\\
&\leq  LM_1 |u_{0,1}|  \int_t^\infty  r^{\alpha-1}e^{-\nu r}\,{\rm d}r\leq LM_1 |u_{0,1}|  \int_t^\infty  r^{\alpha-1}e^{-\delta r}e^{-\omega r}\,{\rm d}r\\
  &\leq LM_1 |u_{0,1}|e^{-\omega t} \int_0^\infty  r^{\alpha-1}e^{-\delta r}\,{\rm d}r.
\end{aligned}
\end{equation}
Hence, in view of \eqref{expu1} and \eqref{expu2}, there exists a constant $M>0$ such that
\[
\|u(t)-(0,x_*)\|_{E_\alpha\times\R}\leq Me^{-\omega t}\|u_0\|_{E_\alpha\times\R},\qquad t\in[0,\infty),
\]
which completes the proof. 
\end{proof}

\section{Mapping properties for the (singular) integral operators $B_{n,m}^p$}\label{SEC:A}

Let $r\in(3/2,2]$, recall the definition~\eqref{cvr} of~$\cV_r$, fix (an arbitrary) $M>1$,  and set
\begin{equation}\label{defVM}
\cV_{r,M}:=\{ \rho\in \cV_r\,:\,\text{$\rho> M^{-1}$ and $\|\rho\|_{H^r}<  M$}\}.
\end{equation}
Some of our arguments below rely on the observation that  for $x\in(-\pi/2,\pi/2)$ we have
 \begin{equation}\label{inequalities}
 |\tan(x)-x|\leq x^2|\tan(x)| \qquad\text{and}\qquad |x|\leq |\tan(x)|.
 \end{equation}

\subsection*{Mapping properties for $B_{n,m}^{p}$} We recall the definition \eqref{Bnmp} of the integral operators $B_{n,m}^{p}$ with~${m,\,n,\,p\in\N}$  and $0\leq p\leq n+1$.
 We begin by showing that, for $p=0$, the operator  $B_{n,m}^{p}(\varrho)[h,\cdot]$  belongs to~${\kL(L_2(\bT))}$ for each $\varrho\in\cV_{r,M}^m$ and~$h \in W^1_\infty(\bT)^n$.
To this end we introduce a second family of multilinear singular integral operators 
\[
G_{n} [h,\beta](\tau)
:=\displaystyle\frac{1}{\pi}\PV\int_{-\pi}^\pi\Big(\prod\limits_{i=1}^n
\frac{\delta_{[\tau,s]}h_i}{t_{[s]}}\Big)\frac{\beta(\tau-s)}{t_{[s]}}\,{\rm d}s,
\]
where $n\in\N$, $h=(h_1,\ldots, h_n)\in W^1_\infty(\bT)^n$, $\beta\in L_2(\bT)$, and $\tau\in\R$.
In the arguments that follow we will use the algebraic property
\begin{equation}\label{difference}
\begin{aligned}
&B_{n,m}^{p}(\varrho)[h,\beta]-B_{n,m}^{p}(\bar\varrho)[h,\beta]\\
&=\sum_{j=1}^m\Big\{
B_{n+2,m+1}^{p}(\bar\varrho_1,\ldots,\bar\varrho_j,\varrho_j,\ldots,\varrho_m)
[h,\bar \varrho_j+\varrho_j,\bar \varrho_j-\varrho_j,\beta]\\[-2ex]
&\hspace{1.275cm}+(\bar \varrho_j^2-\varrho_j^2)B_{n,m+1}^{p}(\bar\varrho_1,\ldots,\bar\varrho_j,\varrho_j,\ldots,\varrho_m)[h,\beta]\\[1ex]
&\hspace{1.275cm}+(\bar \varrho_j+\varrho_j)B_{n,m+1}^{p}(\bar\varrho_1,\ldots,\bar\varrho_j,\varrho_j,\ldots,\varrho_m)[h,(\bar \varrho_j-\varrho_j)\beta]\\[1ex]
&\hspace{1.275cm}+ (\bar \varrho_j-\varrho_j)B_{n,m+1}^{p}(\bar\varrho_1,\ldots,\bar\varrho_j,\varrho_j,\ldots,\varrho_m)[h,(\bar \varrho_j+\varrho_j)\beta]\\ 
&\hspace{1.275cm}+ B_{n,m+1}^{p}(\bar\varrho_1,\ldots,\bar\varrho_j,\varrho_j,\ldots,\varrho_m)[h,(\bar \varrho_j^2-\varrho_j^2)\beta]
\Big\},
\end{aligned}
\end{equation} 
 which holds for all $0\leq p\leq n+1$,  $\varrho=(\varrho_1,\ldots,\varrho_m),\,\bar \varrho=(\bar\varrho_1,\ldots,\bar\varrho_m)\in\cV_{r,M}^m$, $h \in W^1_\infty(\bT)^n$, and~$\beta\in L_2(\bT)$.
Obviously, the operators $B_{n,m}^{p}(\varrho)[h,\cdot]$ are only singular when $p=0$, in which case we have:
 
\begin{lemma}\label{L:B1} Let $n,\,m\in\N$   be given.
\begin{itemize}
\item[(i)] There is a constant $C=C(M)>0$ such that
for all $h \in W^1_\infty(\bT)^n $  and~${\varrho \in\cV_{r,M}^m}$  it holds that
\begin{equation}\label{eq:B2}
\|B_{n,m}^{0}(\varrho)[h,\cdot]\|_{\kL(L_2(\bT))}\leq C\prod_{i=1}^n\|h_i'\|_{\infty}.
\end{equation} 
Moreover, the mapping $[\varrho\mapsto B_{n,m}^0(\varrho)]:\cV_{r,M}^m\to\kL_{\rm sym}^n(W^1_\infty(\bT),\kL(L_2(\bT)))$ is locally Lipschitz continuous.
\item[(ii)] There exists a constant $C=C(M)>0$ such that
for all $\beta \in H^{r-1}(\bT)$, $h \in H^r(\bT)^n $,   and~$\varrho \in\cV_{r,M}^m$  it holds that
\begin{equation}\label{eq:B2'}
\|B_{n,m}^{0}(\varrho)[h,\beta]\|_{\infty}\leq C\|\beta\|_{H^{r-1}}\prod_{i=1}^n\|h_i\|_{H^r}.
\end{equation} 
Moreover, the mapping $[\varrho\mapsto B_{n,m}^0(\varrho)]:\cV_{r,M}^m\to\kL_{\rm sym}^n(H^{r}(\bT),\kL(H^{r-1}(\bT), L_\infty(\bT)))$ is locally Lipschitz continuous.
\end{itemize}
\end{lemma}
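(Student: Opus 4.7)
The plan is to identify $B_{n,m}^0(\varrho)[h,\cdot]$ with a convergent sum of periodic multilinear Calder\'on commutators and then invoke the Coifman--McIntosh--Meyer $L_2$-boundedness theorem. Since $r>3/2$, the embedding $H^r(\bT)\hookrightarrow W^1_\infty(\bT)$ yields the $M$-uniform bounds $\|\varrho_j\|_\infty+\|\varrho_j'\|_\infty\le C(M)$ and $\varrho_j\ge M^{-1}$; in particular the denominator of the kernel \eqref{Bnmp} is two-sidedly bounded, $0<c(M)\le D(\varrho)(\tau,s)\le C(M)$. A Neumann-type expansion of $1/D(\varrho)$ around $(4^m\prod_j\varrho_j(\tau)^2)^{-1}$, obtained by repeatedly extracting factors $\delta_{[\tau,s]}\varrho_j/t_{[s]}$, rewrites $B_{n,m}^0(\varrho)[h,\beta](\tau)$ as an absolutely convergent series of terms of the form
\[
\Phi(\tau)\cdot\frac{1}{\pi}\PV\!\int_{-\pi}^\pi \Big(\prod_{\ell=1}^k\frac{\delta_{[\tau,s]}a_\ell}{t_{[s]}}\Big)\frac{\beta(\tau-s)}{t_{[s]}}\,ds,
\]
with $\Phi\in L_\infty(\bT)$ of norm $\le C(M)$ and each $a_\ell\in\{h_i\}\cup\{\varrho_j\}$, the $n$ factors $h_1,\ldots,h_n$ occurring at least once. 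The (periodic) Coifman--McIntosh--Meyer theorem bounds each multilinear Calder\'on commutator on $L_2(\bT)$ by $C_k\prod_\ell\|a_\ell'\|_\infty$. Since $\|\varrho_j'\|_\infty\le C(M)$, summing the series yields \eqref{eq:B2} with operator norm linear in each $\|h_i'\|_\infty$. For local Lipschitz continuity, the algebraic identity \eqref{difference} expresses $B_{n,m}^0(\varrho)-B_{n,m}^0(\bar\varrho)$ as a finite sum of operators $B_{n',m'}^0$ carrying explicit factors $(\bar\varrho_j-\varrho_j)$ or $(\bar\varrho_j^2-\varrho_j^2)$; applying \eqref{eq:B2} to each summand and using $H^r\hookrightarrow W^1_\infty$ finishes the argument.

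\textbf{Part (ii).} The $L_\infty$-bound is obtained from Sobolev embeddings, available since $r>3/2$: namely $H^{r-1}(\bT)\hookrightarrow L_\infty(\bT)$ and $H^r(\bT)\hookrightarrow \mathrm{C}^{1,r-3/2}(\bT)$. Denote by $K(\tau,s)$ the bounded profile so that the kernel of $B_{n,m}^0(\varrho)[h,\cdot]$ equals $K(\tau,s)/t_{[s]}$, and split
\[
B_{n,m}^0(\varrho)[h,\beta](\tau)=K(\tau,0)\cdot\frac{1}{\pi}\PV\!\int_{-\pi}^\pi\frac{\beta(\tau-s)}{t_{[s]}}\,ds+\frac{1}{\pi}\int_{-\pi}^\pi \frac{K(\tau,s)-K(\tau,0)}{t_{[s]}}\beta(\tau-s)\,ds.
\]
The first contribution equals $2K(\tau,0)\cdot H\beta(\tau)$; since $K(\tau,0)$ is a polynomial in $h_i'(\tau),\varrho_j(\tau),\varrho_j'(\tau)$, it is bounded by $C(M)\prod_i\|h_i\|_{H^r}$, while $\|H\beta\|_\infty\le C\|H\beta\|_{H^{r-1}}=C\|\beta\|_{H^{r-1}}$ by the Fourier-multiplier property of $H$ together with the embedding $H^{r-1}\hookrightarrow L_\infty$. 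For the second contribution, the Taylor-type estimate
\[
\bigg|\frac{\delta_{[\tau,s]}h_i}{t_{[s]}}-2h_i'(\tau)\bigg|\le C|s|^{r-3/2}\|h_i\|_{H^r}\qquad\text{(uniformly in $\tau,s$),}
\]
together with its analogue for the $\varrho_j$-factors (with $C(M)$-uniform constants), yields $|K(\tau,s)-K(\tau,0)|\le C|s|^{r-3/2}\prod_i\|h_i\|_{H^r}$. The resulting integrand is then dominated by the integrable singularity $|s|^{r-3/2-1}\|\beta\|_\infty$, which gives the desired $L_\infty$-bound $C\|\beta\|_{H^{r-1}}\prod_i\|h_i\|_{H^r}$. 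Combining both pieces proves \eqref{eq:B2'}; local Lipschitz continuity in $\varrho$ follows as in Part~(i) by inserting the bound just derived into the algebraic identity \eqref{difference}.

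\textbf{Main obstacles.} The first delicate point is to preserve the multilinear structure of the estimate in (i): the operator norm must be \emph{linear} in each $\|h_i'\|_\infty$, which requires the expansion of $1/D(\varrho)$ to cleanly separate the numerator factors $h_i$ from the $\varrho_j$-corrections, whose boundedness is absorbed into the $M$-dependence. The second and more technically subtle step is the quantitative H\"older expansion $|K(\tau,s)-K(\tau,0)|\le C|s|^{r-3/2}\prod_i\|h_i\|_{H^r}$ in (ii), which requires careful bookkeeping through the product and quotient rules applied to the rational kernel $K$, using the $\cV_{r,M}$-uniform embedding $H^{r-1}(\bT)\hookrightarrow \mathrm{C}^{r-3/2}(\bT)$ for the derivatives of both $h_i$ and $\varrho_j$.
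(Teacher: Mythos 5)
Your part (ii) is correct and is only a mild variant of the paper's argument: the paper subtracts $\beta(\tau)$ inside the integral and exploits the approximate oddness in $s$ of the bounded kernel profile, whereas you freeze the profile at $s=0$ and use the boundedness of $H$ on $H^{r-1}(\bT)$ together with $H^{r-1}(\bT)\hookrightarrow L_\infty(\bT)$. Both splittings rest on the same quantitative H\"older estimate for the profile, which is indeed available uniformly on $\cV_{r,M}$ since $\varrho_j',h_i'\in {\rm C}^{r-3/2}(\bT)$ and the denominator is bounded below by a constant depending only on $M$; the Lipschitz dependence on $\varrho$ via \eqref{difference} also goes through.

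Part (i), however, contains a genuine gap. The Neumann-type expansion of $1/D(\varrho)$ around $\big(4^m\prod_j\varrho_j(\tau)^2\big)^{-1}$ cannot be absolutely convergent: the perturbation contains the term $\big(\delta_{[\tau,s]}\varrho_j/t_{[s]}\big)^2$, which does not vanish as $s\to0$ (it tends to $4\varrho_j'(\tau)^2$) and is not small relative to the frozen term $4\varrho_j(\tau)^2$. On $\cV_{r,M}$ one only has $\varrho_j\ge M^{-1}$ and $\|\varrho_j'\|_\infty\le C(M)$, so the ratio governing your geometric series can be of order $M^4$; consequently the series diverges (and each term would in any case carry uncontrolled powers $\|\varrho_j'\|_\infty^{2k}$), so summing termwise Calder\'on-commutator bounds cannot produce \eqref{eq:B2} — this is precisely the large-Lipschitz-constant obstruction that Coifman--McIntosh--Meyer had to overcome by other means. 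The paper avoids any series in the $\varrho$-dependence: it writes $B^0_{n,m}(\varrho)[h,\beta]=\big(\prod_i(4\omega_{\varrho_i}^2)^{-1}\big)G_n[h,\beta]+A_{n,m}(\varrho)[h,\beta]$, where the frozen coefficient is the true $s\to0$ limit $4\omega_{\varrho_i}^2(\tau)=4(\varrho_i^2+\varrho_i'^2)(\tau)$ (not $4\varrho_i(\tau)^2$), the remainder $A_{n,m}$ has a kernel of size $|s|^{r-3/2}/|t_{[s]}|$, hence integrable, and is handled by Minkowski's inequality, and the deep input is used once, for the single model operator $G_n$ (\cite[Lemma~A.3]{BM25x}), whose bound is genuinely linear in each $\|h_i'\|_\infty$ because no $\varrho$-dependent denominator occurs. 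A repair of your route would at least require expanding around $4\omega_{\varrho_j}^2(\tau)$ and restricting to $|s|\le\delta(M)$ with $\delta(M)$ so small that the perturbation stays below half the lower bound of the denominator, treating $|s|\ge\delta(M)$ separately as a bounded kernel; as written, your reduction does not establish \eqref{eq:B2}, and hence also not the Lipschitz statement in (i).
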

\begin{proof}
In order to establish (i), we note that there exists a constant $C=C(M)>0$ such that for all $\varrho\in\cV_{r,M}^m$ and 
$\tau,\, s\in (-\pi,\pi)$ we have
\begin{equation*} 
\Bigg|\frac{1}{\prod\limits_{i=1}^m\Big[(\varrho_i(\tau)+\varrho_i(\tau-s))^2+\Big(\frac{\delta_{[\tau,s]}\varrho_i}{t_{[s]}}\Big)^2\Big]}
-\frac{1}{\prod\limits_{i=1}^m 4\omega_{\varrho_i}^2(\tau)}\Bigg|\leq C|s|^{r-3/2}.
\end{equation*}
 Using the latter estimate together with Minkowski's inequality, we derive for
\begin{equation}\label{desc}
\begin{aligned}
A_{n,m}(\varrho)[h,\beta]:=B_{n,m}^0(\varrho)[h,\beta]
-\Big(\prod_{i=1}^m \frac{1}{4\omega_{\varrho_i}^2}\Big)G_{n}[h,\beta]
\end{aligned}
\end{equation}
the   inequality
\begin{align*}
\|A_{n,m}(\varrho)[h,\beta]\|_{2}
&\leq C \bigg(\prod_{i=1}^n \|h_i'\|_\infty\bigg)\int_{-\pi}^{\pi}\frac{|s|^{r-3/2}}{t_{[s]}}
\Big(\int_{-\pi}^{\pi}|\beta(\tau)|^2\, {\rm d}\tau\Big)^{1/2}\, {\rm d}s\\
&\leq C \|\beta\|_2 \prod_{i=1}^n\|h_i'\|_\infty.
\end{align*}
The estimate \eqref{eq:B2} is now a direct consequence of \cite[Lemma A.3]{BM25x}.
 Finally, the  local Lipschitz continuity assertion is a straightforward consequence of~\eqref{difference} and~\eqref{eq:B2}.  \medskip
 
In order to establish \eqref{eq:B2'},  we denote by $K_{n,m}=K_{n,m}(\tau,s)$ the bounded part of the  kernel of the singular integral operator $B_{n,m}^0(\varrho)[h,\cdot]$; that is, 
\[
B_{n,m}^0(\varrho)[h,\beta](\tau)= \PV\int_{-\pi}^\pi K_{n,m}(\tau,s)\frac{\beta(\tau-s)}{t_{[s]}}\,{\rm d}s,\qquad \tau\in\R.
\]
Then
\begin{align*}
B_{n,m}^0(\varrho)[h,\beta](\tau)&=   \int_{\{|s|<\pi\}} K_{n,m}(\tau,s)\frac{\beta(\tau-s)-\beta(\tau)}{t_{[s]}}\,{\rm d}s
 + \beta(\tau) \PV\int_{\{|s|<1\}} \frac{K_{n,m}(\tau,s)}{t_{[s]}}\,{\rm d}s,
\end{align*}
 where the first  term on the right-hand side may be estimated by the right-hand side of \eqref{eq:B2'} due to~$\beta\in{\rm C}^{r-3/2}(\bT)$. 
 Concerning the remaining term we get, in view of
 \[
 |K_{n,m}(\tau,s)-K_{n,m}(\tau,-s)|\leq  C|s|^{r-3/2} \prod_{i=1}^n\|h_i\|_{H^r},
 \]
 that 
 \begin{align*}
 \bigg|\PV\int_{\{|s|<1\}} \frac{K_{n,m}(\tau,s)}{t_{[s]}}\,{\rm d}s \bigg|= \bigg| \int_0^1 \frac{K_{n,m}(\tau,s)-K_{n,m}(\tau,-s)}{t_{[s]}}\,{\rm d}s \bigg|\leq C \prod_{i=1}^n\|h_i\|_{H^r},
 \end{align*}
 and  thus \eqref{eq:B2'}. 
 Since the local Lipschitz continuity assertion follows as before from~\eqref{difference} and \eqref{eq:B2'}, the proof is complete. 
\end{proof}

We next consider the complementary case of Lemma~\ref{L:B1}, where $1\leq p\leq n+1$,  in which the operators $B_{n,m}^p$ are more regular.
\begin{lemma}\label{L:B1p} Let $n,\,m,\, p\in\N$ with $1\leq p\leq n+1$   be given.
\begin{itemize}
\item[(i)] There is  a  constant~${C=C(M)>0}$ such that
for all $h \in {\rm C^1}(\bT)^n $   and~$\varrho\in\cV_{r,M}^m$  it holds that
\begin{equation}\label{eq:B1p1}
\|B_{n,m}^p(\varrho)[h,\cdot]\|_{\kL(L_1(\bT),{\rm C}(\bT))}\leq C\prod_{i=1}^n\|h_i'\|_{\infty}.
\end{equation} 
Moreover, the mapping $[\varrho\mapsto B_{n,m}^p(\varrho)]:\cV_{r,M}^m\to\kL_{\rm sym}^n(W^1_\infty(\bT),\kL(L_1(\bT), {\rm C}(\bT)))$
 is locally Lipschitz continuous.
 \item[(ii)]  For $n\geq 1$ there is  a  constant~${C=C( M)>0}$ such that
for all~${h \in H^1(\bT)\times{\rm C^1}(\bT)^{n-1}}$  and~$\varrho\in\cV_{r,M}^m$  it holds that
\begin{equation}\label{eq:B1p2}
\|B_{n,m}^p(\varrho)[h,\cdot]\|_{\kL(L_2(\bT))}\leq C\|h_1'\|_{2}\prod_{i=2}^n\|h_i'\|_{\infty}.
\end{equation} 
\end{itemize}
\end{lemma}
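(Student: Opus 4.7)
The plan is to first establish that when $1\leq p\leq n+1$ the kernel of the integral operator $B_{n,m}^p(\varrho)[h,\cdot]$ is uniformly bounded by $C(M)\prod_{i=1}^n\|h_i'\|_\infty$, and then to deduce both (i) and (ii) from this. To verify the kernel bound, I would use the tangent half-angle identity $t_{[s]}=\sin(s/2)/\cos(s/2)$ to rewrite the integrand of $B_{n,m}^p$ (including the outer factor $1/t_{[s]}$) as the product of $\beta(\tau-s)$ and
\[
K_{h,\varrho}(\tau,s):=\frac{1}{\pi}\cdot\frac{\cos^{n+1-p}(s/2)}{\sin^{n+1-p}(s/2)}\cdot\frac{\prod_{i=1}^n\delta_{[\tau,s]}h_i}{\prod_{i=1}^m\bigl[(\varrho_i(\tau)+\varrho_i(\tau-s))^2+(\delta_{[\tau,s]}\varrho_i/t_{[s]})^2\bigr]}.
\]
The denominator is bounded below uniformly on $\cV_{r,M}^m$ using $\varrho_i>1/M$. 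For the numerator, the mean value theorem gives $|\delta_{[\tau,s]}h_i|\leq|s|\|h_i'\|_\infty$, and the concavity inequality $\sin(|s|/2)\geq|s|/\pi$ on $[0,\pi]$ upgrades this to $|\delta_{[\tau,s]}h_i|\leq\pi\sin(|s|/2)\|h_i'\|_\infty$; combined with the $\sin^{n+1-p}(s/2)$ in the denominator this leaves the factor $\sin^{p-1}(|s|/2)$, which is bounded when $p\geq 1$. This produces the uniform kernel estimate $|K_{h,\varrho}(\tau,s)|\leq C(M)\prod_{i=1}^n\|h_i'\|_\infty$.

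For~(i), the inequality $\|B_{n,m}^p(\varrho)[h,\beta]\|_\infty\leq\|K_{h,\varrho}\|_\infty\|\beta\|_1$ immediately yields \eqref{eq:B1p1}. To verify continuity of $\tau\mapsto B_{n,m}^p(\varrho)[h,\beta](\tau)$, I would split
\[
B_{n,m}^p(\varrho)[h,\beta](\tau_k)-B_{n,m}^p(\varrho)[h,\beta](\tau)
\]
into a term in which only the kernel varies with $\tau$ and one in which only $\beta$ is translated, handling the first by dominated convergence (pointwise continuity of $K_{h,\varrho}$ in $\tau$ together with the uniform bound as majorant) and the second by continuity of translation in $L_1(\bT)$. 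The local Lipschitz continuity of $\varrho\mapsto B_{n,m}^p(\varrho)$ will then follow by applying the uniform kernel bound to each of the five terms on the right-hand side of the algebraic identity \eqref{difference}, using the embedding $H^r(\bT)\hookrightarrow{\rm C}^1(\bT)$ to bound the slots in which $\bar\varrho_j-\varrho_j$ appears.

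For~(ii), the plan is to trade the $L_\infty$ bound on $h_1'$ for an $L_2$ bound by exposing $h_1'$ through the representation
\[
\delta_{[\tau,s]}h_1=s\int_0^1h_1'(\tau-\sigma s)\,{\rm d}\sigma.
\]
Substituting this into $K_{h,\varrho}$ and exchanging orders of integration via Fubini, the operator rewrites as
\[
B_{n,m}^p(\varrho)[h,\beta](\tau)=\int_0^1\int_{-\pi}^\pi\widetilde K_{h,\varrho}(\tau,s)\,\frac{s}{t_{[s]}}\,h_1'(\tau-\sigma s)\,\beta(\tau-s)\,{\rm d}s\,{\rm d}\sigma,
\]
where $\widetilde K_{h,\varrho}$ is obtained from $K_{h,\varrho}$ by removing the $\delta_{[\tau,s]}h_1$ factor and is controlled by the same argument as above (applied now with $n-1$ in place of $n$); together with $|s/t_{[s]}|\leq\pi$ this gives a uniform bound $C(M)\prod_{i=2}^n\|h_i'\|_\infty$. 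Applying Minkowski's integral inequality in $\sigma\in[0,1]$, then Cauchy--Schwarz in $s$ for fixed $\tau$, and finally Fubini together with the translation invariance $\int_{\bT}|h_1'(\tau-\sigma s)|^2\,{\rm d}\tau=\|h_1'\|_2^2$ (valid for each fixed $s$) yields \eqref{eq:B1p2}. The main delicate point throughout is the trigonometric rewriting producing the uniform kernel bound; once that is in hand, everything else is routine.
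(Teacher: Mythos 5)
Your part~(i) is essentially the paper's argument: the paper bounds the kernel using the elementary inequalities \eqref{inequalities} (yielding the bounded majorant $|s/t_{[s]}|^n|t_{[s]}|^{p-1}$, the same content as your $\sin(|s|/2)\ge|s|/\pi$ computation), deduces the $\kL(L_1(\bT),L_\infty(\bT))$-bound, obtains continuity of $B_{n,m}^p(\varrho)[h,\beta]$ by dominated convergence for $\beta\in{\rm C}(\bT)$ and passes to $\beta\in L_1(\bT)$ by density, and gets the local Lipschitz dependence on $\varrho$ from \eqref{difference} exactly as you do. Your direct continuity proof for general $\beta\in L_1(\bT)$ also works, but note that as written the "dominated convergence" step is slightly off: in the term where only the kernel varies, the majorant $2\|K_{h,\varrho}\|_\infty|\beta(\tau_k-s)|$ still depends on $k$; substitute $u=\tau_k-s$ (or use uniform continuity of the jointly continuous kernel and bound by $\sup_s|K_{h,\varrho}(\tau_k,s)-K_{h,\varrho}(\tau,s)|\,\|\beta\|_1$) to make it airtight.

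For part~(ii) your route (write $\delta_{[\tau,s]}h_1=s\int_0^1h_1'(\tau-\sigma s)\,{\rm d}\sigma$, then Minkowski in $\sigma$, Cauchy--Schwarz in $s$, Fubini and periodicity) is valid and differs mildly from the paper, which is shorter: it uses $|\delta_{[\tau,s]}h_1|\le\|h_1'\|_2|s|^{1/2}$ inside the full kernel, leaving the integrable weight $|s|^{-1/2}$, and concludes with one application of Minkowski's integral inequality. One step of yours needs repair, though it does not affect the conclusion: the claim that $\widetilde K_{h,\varrho}$ alone is "controlled by the same argument applied with $n-1$ in place of $n$" fails in the boundary case $p=n+1$, because the $(n-1)$-version of the kernel bound requires $p\le n$; concretely, the kernel of $B^{n+1}_{n-1,m}$ carries a factor $t_{[s]}$ that blows up as $s\to\pm\pi$ (and under the literal reading of "removing only the factor $\delta_{[\tau,s]}h_1$" from your displayed $K_{h,\varrho}$, one instead gets a $\sin^{p-2}(s/2)$ singularity at $s=0$ when $p=1$, and the displayed factorization then misses a factor $t_{[s]}$). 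What is both true and sufficient is that the product $K_{h,\varrho}(\tau,s)\,s/\delta_{[\tau,s]}h_1$, i.e.\ the quantity that actually multiplies $h_1'(\tau-\sigma s)\beta(\tau-s)$ in the correct factorization, is uniformly bounded by $C(M)\prod_{i=2}^n\|h_i'\|_\infty$ for all $1\le p\le n+1$; so keep the factor $s/t_{[s]}$ attached to $\widetilde K_{h,\varrho}$ before estimating instead of invoking $|s/t_{[s]}|\le\pi$ separately. With that one-line fix the proof is complete.
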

\begin{proof}
 Regarding (i), we infer from \eqref{inequalities}  and $1\leq p\leq n+1$ that for  $\tau\in\R$, we have 
\begin{align*}
|B_{n,m}^p(\varrho)[h,\beta](\tau)|
&\leq C \bigg(\prod_{i=1}^n \|h_i'\|_\infty\bigg)\int_{-\pi}^{\pi}\Big|\frac{s}{t_{[s]}}\Big|^n|t_{[s]}|^{p-1}|\beta(\tau-s)|\,{\rm d}s
\leq C \|\beta\|_1 \prod_{i=1}^n\|h_i'\|_\infty,
\end{align*}
 which shows in particular that $B_{n,m}^p(\varrho)[h,\cdot]\in\kL(L_1(\bT),L_\infty(\bT))$.
 Moreover, if~$\beta\in{\rm C}(\bT)$, a simple application of the dominated convergence theorem  yields~$B_{n,m}^p(\varrho)[h,\beta]\in{\rm C}(\bT)$, which proves~\eqref{eq:B1p1} via a standard density argument.
The local Lipschitz continuity follows now from  \eqref{difference} and \eqref{eq:B1p1}.

Concerning~(ii), we note  from $|\delta_{[\tau,s]}h_1|\leq \|h_1'\|_2|s|^{1/2}$ and   Minkowski's inequality that
\begin{align*}
\|B_{n,m}^p(\varrho)[h,\beta]\|_2\leq C \|h_1'\|_2\bigg(\prod_{i=1}^n \|h_i'\|_\infty\bigg)\int_{-\pi}^{\pi}|s|^{-1/2}\Big(\int_{-\pi}^{\pi}|\beta(\tau-s)|^2\,{\rm d}\tau\Big)^{1/2}\,{\rm d }s,
\end{align*}  
which proves~\eqref{eq:B1p2}.
\end{proof}

The next regularity result is one of the main ingredients in the analysis of \eqref{QPP}.
\begin{lemma}\label{L:B3} Let    $n,\,m\in\N$  be given. Then,
 there exists a constant $C=C(M)>0$ such that
for all $h \in H^r(\bT)^n $   and $\varrho\in\cV_{r,M}^m$ it holds that
\begin{equation}\label{eq:B3'}
\|B_{n,m}^0(\varrho)[h,\cdot]\|_{\kL(H^{r-1}(\bT))}\leq C\prod_{i=1}^n\|h_i\|_{H^r}.
\end{equation} 
Moreover, the mapping $[\varrho\mapsto B_{n,m}^0(\varrho)]:\cV_{r,M}^m\to\kL_{\rm sym}^n(H^{r}(\bT),\kL(H^{r-1}(\bT)))$ is locally Lipschitz continuous.
\end{lemma}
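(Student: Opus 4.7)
The $L_2$-boundedness from Lemma~\ref{L:B1}(i), combined with the embedding $H^r\hookrightarrow W^1_\infty$, already yields
\begin{equation*}
\|B_{n,m}^0(\varrho)[h,\beta]\|_2 \leq C\prod_{i=1}^n\|h_i\|_{H^r}\|\beta\|_{H^{r-1}}.
\end{equation*}
Via \eqref{equiv}, the estimate \eqref{eq:B3'} therefore reduces to controlling the seminorm \eqref{equivsem} of $B_{n,m}^0(\varrho)[h,\beta]$. The plan is to separate the principal singularity of $B_{n,m}^0$ from a smoother remainder using the decomposition \eqref{desc},
\begin{equation*}
B_{n,m}^0(\varrho)[h,\beta] = \Bigl(\prod_{i=1}^m \tfrac{1}{4\omega_{\varrho_i}^2}\Bigr) G_n[h,\beta] + A_{n,m}(\varrho)[h,\beta],
\end{equation*}
and to treat the two pieces separately. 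Since $r-1>1/2$ and the prefactor $\prod_{i=1}^m(4\omega_{\varrho_i}^2)^{-1}$ lies in $H^r(\bT)$ and depends Lipschitz-continuously on $\varrho\in\cV_{r,M}^m$, the algebra inequality \eqref{algebra} reduces the task to establishing the $H^{r-1}$-boundedness of $G_n[h,\cdot]$ and $A_{n,m}(\varrho)[h,\cdot]$ individually.

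For the smoother remainder $A_{n,m}(\varrho)[h,\cdot]$, I would exploit the pointwise kernel bound derived in the proof of Lemma~\ref{L:B1}(i), which carries an extra factor $|s|^{r-3/2}$ over the kernel of $B_{n,m}^0$. To estimate $\|\sfT_\xi A_{n,m}(\varrho)[h,\beta]-A_{n,m}(\varrho)[h,\beta]\|_2$, I split the $s$-integration into $|s|\leq 2|\xi|$ (where the extra factor gives directly a contribution of size $|\xi|^{r-1}\|\beta\|_2$) and $|s|>2|\xi|$ (where I use H\"older continuity in $\tau$ of the kernel, supplied by $H^r\hookrightarrow\mathrm{C}^{r-1/2}$). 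The resulting $|\xi|^{r-1}$-decay is integrable against $d\xi/|\xi|^{1+2(r-1)}$ and gives the desired bound on $[A_{n,m}(\varrho)[h,\beta]]_{H^{r-1}}$.

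For the genuinely singular piece $G_n[h,\cdot]$, I would extract the multiplicative part by means of the Taylor identity
\begin{equation*}
\frac{\delta_{[\tau,s]} h_i}{t_{[s]}} = h_i'(\tau)\,\frac{s}{t_{[s]}} + \frac{s}{t_{[s]}}\int_0^1\bigl(h_i'(\tau-\theta s) - h_i'(\tau)\bigr)\,d\theta.
\end{equation*}
Substituting into $G_n$ and expanding the product yields $G_n[h,\beta] = c_n\bigl(\prod_{i=1}^n h_i'\bigr)\, H[\beta] + \mathcal R[h,\beta]$ for some universal constant $c_n$ and a remainder $\mathcal R$ in which each summand carries at least one factor $(h_i'(\tau-\theta s) - h_i'(\tau))$, bounded by $C|s|^{r-3/2}\|h_i\|_{H^r}$ via $H^r\hookrightarrow\mathrm{C}^{r-1/2}$. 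The principal term is estimated by iterating \eqref{algebra} on the product $\prod h_i'\in H^{r-1}(\bT)$ and using that the Hilbert transform $H$ is a Fourier multiplier on $H^{r-1}(\bT)$; the remainder $\mathcal R$ inherits the same $|s|^{r-3/2}$ smoothing as $A_{n,m}$ and is handled by exactly the same translation-increment argument. Finally, the local Lipschitz continuity of $\varrho\mapsto B_{n,m}^0(\varrho)$ follows by inserting the algebraic identity \eqref{difference} and applying the just-proved $H^{r-1}$-boundedness (at the higher indices $n+2$, $m+1$) together with \eqref{algebra} to control the $\varrho_j\pm\bar\varrho_j$ and $(\varrho_j-\bar\varrho_j)$ factors in $H^r(\bT)$.

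\emph{Main obstacle.} The delicate step is the $H^{r-1}$-boundedness of $G_n[h,\cdot]$: a naive route estimating the translation increment via Lemma~\ref{L:B1}(i) applied to the resulting $h$- and $\varrho$-difference operators produces only an $|\xi|^{r-3/2}$ decay, which makes $\int|\xi|^{2(r-3/2)-1-2(r-1)}\,d\xi = \int|\xi|^{-2}\,d\xi$ diverge. The Taylor-extraction step above is precisely what circumvents this, by isolating the unavoidable ``multiplication times Hilbert transform'' contribution (handled by Fourier multipliers and the algebra property of $H^{r-1}(\bT)$) from a genuinely smoothing remainder.
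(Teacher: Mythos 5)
Your overall architecture (reduce to the seminorm \eqref{equivsem}, split off the Hilbert-transform principal part, treat the prefactor by \eqref{algebra}, get Lipschitz dependence from \eqref{difference}) is reasonable, and the principal term $c_n\bigl(\prod_i h_i'\bigr)H[\beta]$ is indeed harmless. The gap is in the treatment of the "smoothing" remainders $A_{n,m}(\varrho)[h,\cdot]$ and $\mathcal R[h,\cdot]$, and it is twofold. First, pure decay of order $|\xi|^{r-1}$ for $\|\sfT_\xi u-u\|_2$ is \emph{not} integrable against the weight: with \eqref{equivsem} one gets $\int_{-\pi}^\pi |\xi|^{2(r-1)}|\xi|^{-1-2(r-1)}\,{\rm d}\xi=\int_{-\pi}^\pi |\xi|^{-1}\,{\rm d}\xi=\infty$; membership in $H^{r-1}$ requires decay strictly better than $|\xi|^{r-1}$ (this is exactly the endpoint failure $C^{r-1}\not\hookrightarrow H^{r-1}$). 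Second, your split of the $s$-integration does not even produce $|\xi|^{r-1}$: on $\{|s|\le 2|\xi|\}$ the kernel bound $|s|^{r-3/2}\cdot|t_{[s]}|^{-1}\sim|s|^{r-5/2}$ gives a contribution of size $|\xi|^{r-3/2}\|\beta\|_2$, and on $\{|s|>2|\xi|\}$ the ${\rm C}^{r-3/2}$-H\"older continuity of the kernel in $\tau$ gives at best $|\xi|^{r-3/2}\log(1/|\xi|)\|\beta\|_2$; both are the same insufficient exponent you yourself identify as fatal in your "main obstacle" paragraph. So the route fails for the remainder pieces as long as you only pay with $\|\beta\|_2$ and aim at a power of $|\xi|$.

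What repairs this — and is the heart of the paper's proof — is to never aim at power decay at all: write the translation increment of $B^0_{n,m}(\varrho)[h,\beta]$ (via \eqref{difference}) as operators applied to $\sfT_\xi\beta-\beta$, to $\sfT_\xi h_j-h_j$, and to $\sfT_\xi\varrho_j-\varrho_j$, and then invoke the refined estimate of Lemma~\ref{L:B4}, which lets the differenced slot be measured merely in $H^1$ while $\beta$ is paid for in $H^{r-1}$. This yields $\|\sfT_\xi(B[\beta])-B[\beta]\|_2\le C\bigl(\|\sfT_\xi\beta-\beta\|_2+\|\beta\|_{H^{r-1}}(\|\sfT_\xi\varrho-\varrho\|_{H^1}+\sum_j\|\sfT_\xi h_j-h_j\|_{H^1})\bigr)\prod\|h_i\|_{H^r}$-type bounds, whose weighted square-integrals converge precisely because $\beta\in H^{r-1}$ and $\varrho,h\in H^r$ — no power of $|\xi|$ ever appears. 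Your proposal lacks this ingredient (Lemma~\ref{L:B4} or an equivalent, e.g. a genuine commutator/paraproduct estimate), and without it the remainder terms are not controlled. A smaller omission: the seminorm characterization \eqref{equiv} is only available for $r\in(3/2,2)$; the case $r=2$ needs a separate argument (the paper differentiates, using the formula \eqref{forder}, and again falls back on Lemmas~\ref{L:B1} and~\ref{L:B4}), which your proposal does not address.
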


In the  non-singular case of  $B_{n,m}^p(\varrho)$ with  $1\leq p\leq n+1$ we establish a stronger result. 
\begin{lemma}\label{L:B3p} Given  $m,\,n,\, p\in\N$ with $1\leq p\leq n+1$, there exists  a  constant~${C=C(M)>0}$ such that
for all $h \in {\rm C^1}(\bT)^n $   and $\varrho\in\cV_{r,M}^m$  it holds that
\begin{equation}\label{eq:B3p'}
\|B_{n,m}^p(\varrho)[h,\cdot]\|_{\kL(L_2(\bT),H^{1}(\bT))}\leq C\prod_{i=1}^n\|h_i\|_{{\rm C}^1}.
\end{equation} 
Moreover, the mapping $[\varrho\mapsto B_{n,m}^p(\varrho)]:\cV_{r,M}^m\to\kL_{\rm sym}^n({\rm C}^1(\bT),\kL(L_2(\bT), H^1(\bT)))$
 is locally Lipschitz continuous.
\end{lemma}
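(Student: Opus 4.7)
The hypothesis $1\le p\le n+1$ guarantees that the kernel
\[
\mathcal{K}(\tau,s):=\frac{t_{[s]}^{p-1}\prod_{i=1}^n(\delta_{[\tau,s]}h_i/t_{[s]})}{\prod_{j=1}^m\bigl[(\varrho_j(\tau)+\varrho_j(\tau-s))^2+(\delta_{[\tau,s]}\varrho_j/t_{[s]})^2\bigr]}
\]
of $B_{n,m}^p(\varrho)[h,\cdot]$, obtained after absorbing the $1/t_{[s]}$ attached to $\beta(\tau-s)$ into the kernel, is uniformly bounded on $\R\times(-\pi,\pi)\setminus\{0\}$: the elementary bound $|\delta_{[\tau,s]}h_i/t_{[s]}|\le C\|h_i'\|_\infty$ controls small $|s|$, while $|\delta_{[\tau,s]}h_i/t_{[s]}|\le C\|h_i\|_\infty/|\tan(s/2)|$ combined with $p\le n+1$ controls the region $|s|\to\pi$; the denominator is bounded below by $(2/M)^2$. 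By Schur's test, $\|B_{n,m}^p(\varrho)[h,\cdot]\|_{\kL(L_2(\bT))}\le C(M)\prod_{i=1}^n\|h_i\|_{{\rm C}^1}$, so in view of $\|u\|_{H^1}\simeq\|u\|_2+\|u'\|_2$ it suffices to estimate $\partial_\tau B_{n,m}^p(\varrho)[h,\beta]$ in $L_2(\bT)$.

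The kernel is ${\rm C}^1$ in $\tau$, so differentiation under the integral applies. Applying Leibniz and then decomposing each $\partial_\tau$ via the two algebraic splittings
\[
\frac{h_i'(\tau)-h_i'(\tau-s)}{t_{[s]}}=\frac{h_i'(\tau)}{t_{[s]}}-\frac{h_i'(\tau-s)}{t_{[s]}},\qquad \varrho_j(\tau)+\varrho_j(\tau-s)=2\varrho_j(\tau)-\delta_{[\tau,s]}\varrho_j,
\]
the plan is to obtain a finite representation
\begin{align*}
\partial_\tau B_{n,m}^p(\varrho)[h,\beta]
&=\sum_{i=1}^n\Bigl(h_i'(\tau)\,B_{n-1,m}^{p-1}(\varrho)[h_{-i},\beta]-B_{n-1,m}^{p-1}(\varrho)[h_{-i},h_i'\beta]\Bigr)\\
&\quad+\sum_{j=1}^m\bigl(\text{mult.}\bigr)\cdot\Bigl\{B_{\tilde n,m+1}^{\tilde p}(\varrho,\varrho_j)[\tilde h,\beta]\ \text{or}\ B_{\tilde n,m+1}^{\tilde p}(\varrho,\varrho_j)[\tilde h,\varrho_j'\beta]\Bigr\},
\end{align*}
where $h_{-i}$ omits the $i$-th entry, the second piece of each split is absorbed into the density slot ($\beta\leadsto h_i'\beta$ or $\beta\leadsto\varrho_j'\beta$, both in $L_2$), the multipliers are built from $\varrho_j(\tau),\varrho_j'(\tau)\in L_\infty$, and the triples $(\tilde n,\tilde p)$ appearing in the $\mathcal{R}_j$-contribution are $(n,p)$, $(n+1,p+1)$ and $(n+1,p-1)$. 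In every case the new index triple $(\tilde n,m+1,\tilde p)$ satisfies $0\le\tilde p\le\tilde n+1$.

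Each $B_{\tilde n,\tilde m}^{\tilde p}$ on the right then maps $L_2\to L_2$ with the desired quantitative bound: when $\tilde p=0$ this follows from Lemma~\ref{L:B1}, and when $\tilde p\ge 1$ from the Schur estimate of the first paragraph. The multipliers belong to $L_\infty(\bT)$ with norms controlled by $\|h_i\|_{{\rm C}^1}$ or, thanks to $H^{r-1}(\bT)\hookrightarrow L_\infty(\bT)$ for $r>3/2$, by $C(M)$; the multiplication maps $\beta\mapsto h_i'\beta$ and $\beta\mapsto\varrho_j'\beta$ are bounded on $L_2$ with the same quantitative bounds. Summing produces $\|\partial_\tau B_{n,m}^p(\varrho)[h,\beta]\|_2\le C(M)\prod_{i=1}^n\|h_i\|_{{\rm C}^1}\|\beta\|_2$, giving \eqref{eq:B3p'}. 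The local Lipschitz estimate in $\varrho$ follows by inserting \eqref{difference} and applying the just-proved \eqref{eq:B3p'} to each of the finitely many $B_{\cdot,m+1}^p$-operators on its right-hand side, controlling the $\bar\varrho_j\pm\varrho_j$-factors in the ${\rm C}^1$-norm via $H^r(\bT)\hookrightarrow {\rm C}^1(\bT)$. The principal technical difficulty is book-keeping: one must verify that after all differentiations and splittings every resulting index triple stays inside the admissible range $0\le\tilde p\le\tilde n+1$, which is exactly where the hypothesis $p\ge 1$ is used (otherwise the term $B_{n-1,m}^{p-1}$ produced by differentiating one factor $\delta_{[\tau,s]}h_i/t_{[s]}$ would carry a forbidden index $\tilde p=-1$).
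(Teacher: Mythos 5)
Your overall strategy (write the derivative as a finite combination of operators $B^{\tilde p}_{\tilde n,\tilde m}$ with $L_\infty$-multipliers and $L_2$-densities, then invoke the $L_2$-bounds of Lemma~\ref{L:B1} and Lemma~\ref{L:B1p}) is the same as the paper's, but your representation of $\big(B_{n,m}^p(\varrho)[h,\beta]\big)'$ is not correct: you differentiate only the kernel and ignore that the density is evaluated at $\tau-s$, hence also depends on $\tau$. A concrete check: take $n=1$, $p=2$ and constant $\varrho=(1,\dots,1)$. Then $B^2_{1,m}(\varrho)[h,\beta](\tau)=\tfrac{2}{4^m}\big(h(\tau)\langle\beta\rangle-\langle h\beta\rangle\big)$, whose derivative is $\tfrac{2}{4^m}h'(\tau)\langle\beta\rangle$, whereas your formula (whose $j$-sum vanishes here, since it stems from differentiating the denominator) gives $\tfrac{2}{4^m}\big(h'(\tau)\langle\beta\rangle-\langle h'\beta\rangle\big)$; the term $-B^{p-1}_{n-1,m}(\varrho)[h_{-i},h_i'\beta]$ is spurious. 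The missing contribution is exactly the one where $\partial_\tau$ falls on $\beta(\tau-s)$, and handling it is the crux of the lemma because $\beta$ is only in $L_2$. The paper's route: for smooth $(h,\beta)$ write $\beta'(\tau-s)=-\partial_s\big(\beta(\tau-s)\big)$ and integrate by parts in $s$, so that the derivative equals $\PV\int_{-\pi}^\pi\big[(\p_\tau+\p_s)K^p_{n,m}(\tau,s)\big]\beta(\tau-s)\,{\rm d}s$ as in \eqref{derH1}. Under the combined derivative one has $(\p_\tau+\p_s)\delta_{[\tau,s]}u=u'(\tau)$, so the pieces you tried to absorb into the density slot cancel, and instead extra terms appear from $\p_s$ acting on the powers of $t_{[s]}$, namely $\tfrac{p-n-1}{2}\big(B^{p-1}_{n,m}+B^{p+1}_{n,m}\big)(\varrho)[h,\beta]$, together with denominator terms of a slightly different shape than yours; only then is every term controlled by Lemma~\ref{L:B1} and Lemma~\ref{L:B1p}, and the case $h\in{\rm C}^1(\bT)^n$, $\beta\in L_2(\bT)$ follows by density.

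A smaller point: even leaving the identity aside, the sentence ``the kernel is ${\rm C}^1$ in $\tau$, so differentiation under the integral applies'' is not a justification, both because of the $\tau$-dependence of $\beta(\tau-s)$ just discussed and because for $p=1$ and $h$ merely ${\rm C}^1$ the differentiated integrand need not admit an integrable majorant; this is precisely why the paper proves the formula for smooth data first and then passes to the limit, using that both sides depend continuously on $(h,\beta)$ (via \eqref{difference}, Lemma~\ref{L:B1}, Lemma~\ref{L:B1p}). Your reduction of the local Lipschitz continuity to \eqref{difference} is fine once the main estimate is established.
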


The proofs of Lemma~\ref{L:B3} and Lemma~\ref{L:B3p} require some preparation and are therefore postponed until after the proof of the following property:

\begin{lemma}\label{L:B4} Given   $n,\,m\in\N$  with $n\geq1$,
 there exists a positive  constant $C=C(M)$ such that
for all~$\beta \in H^{r-1}(\bT)^n $, $h=(h_1,\ldots,h_n) \in H^r(\bT)^n $,   and $\varrho\in\cV_{r,M}^m$ it holds that
\begin{equation}\label{eq:B4}
\|B_{n,m}^0(\varrho)[h_1,\ldots,h_n,\beta]\|_{2}\leq C\|h_1\|_{H^1}\|\beta\|_{H^{r-1}}\prod_{i=2}^n\|h_i\|_{H^r}.
\end{equation} 
Moreover, the mapping $[\varrho\mapsto B_{n,m}^0(\varrho)]:\cV_{r,M}^m\to\kL^n(H^1(\bT)\times H^{r}(\bT)^{n-1},\kL(H^{r-1}(\bT), L_2(\bT)))$
is locally Lipschitz continuous.
\end{lemma}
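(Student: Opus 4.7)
The plan is to exploit the following trade-off with Lemma~\ref{L:B1}(i): we may weaken the requirement $h_1\in W^1_\infty$ to $h_1\in H^1$ at the cost of upgrading $\beta\in L_2$ to $\beta\in H^{r-1}$. Since $r>3/2$, we have at our disposal the embeddings $H^{r-1}(\bT)\hookrightarrow {\rm C}^{r-3/2}(\bT)\cap L_\infty(\bT)$ and $H^r(\bT)\hookrightarrow W^1_\infty(\bT)$, which will be used freely throughout. The strategy is to split $\beta(\tau-s)=\beta(\tau)-\delta_{[\tau,s]}\beta$ inside the integrand of $B_{n,m}^0(\varrho)[h,\beta](\tau)$; this decomposes the operator as $B_{n,m}^0(\varrho)[h,\beta](\tau)=\beta(\tau)\,T_1(\tau)+T_2(\tau)$, where
$$T_1(\tau)=\frac{1}{\pi}\,\PV\!\int_{-\pi}^\pi K_\varrho(\tau,s)\,\frac{\delta_{[\tau,s]} h_1}{t_{[s]}^2}\,{\rm d}s,\qquad T_2(\tau)=-\frac{1}{\pi}\,\PV\!\int_{-\pi}^\pi K_\varrho(\tau,s)\,\frac{\delta_{[\tau,s]} h_1\,\delta_{[\tau,s]}\beta}{t_{[s]}^2}\,{\rm d}s,$$
and $K_\varrho(\tau,s)$ collects the remaining part of the kernel, pointwise dominated by $C(M)\prod_{i=2}^n\|h_i'\|_\infty$.

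The piece $T_2$ is handled by Minkowski's inequality: combining $|\delta_{[\tau,s]}\beta|\leq C\,[\beta]_{{\rm C}^{r-3/2}}|s|^{r-3/2}$ with the $L^2_\tau$-bound $\|\delta_{[\cdot,s]}h_1\|_{L^2_\tau}\leq |s|\,\|h_1'\|_2$ (obtained from $\delta_{[\tau,s]}h_1=\int_0^s h_1'(\tau-r)\,{\rm d}r$ and Hölder) reduces the estimate to the integrability of $|s|^{r-5/2}$ near zero, which holds precisely because $r>3/2$. This yields $\|T_2\|_2\leq C\,\|h_1\|_{H^1}\|\beta\|_{H^{r-1}}\prod_{i=2}^n\|h_i\|_{H^r}$.

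The estimate of $\beta\,T_1$ is the main point. Since $\|\beta\|_\infty\leq C\,\|\beta\|_{H^{r-1}}$, it suffices to bound $\|T_1\|_2$ by $C\,\|h_1\|_{H^1}\prod_{i=2}^n\|h_i\|_{H^r}$. I would perform an integration by parts in $s$ based on the identity $1/t_{[s]}^2=-2\,\partial_s(1/t_{[s]})-1$. Transferring the derivative onto $K_\varrho\,\delta_{[\tau,s]}h_1$ and using $\partial_s\delta_{[\tau,s]}h_1=h_1'(\tau-s)$ rewrites $T_1$ as a finite linear combination of operators of the form $B_{n',m'}^{p'}(\varrho)[\,\cdot\,,h_1']$ with $p'\in\{0,1\}$, plus lower-order terms in which the remaining $\delta_{[\tau,s]}h_1$ factor multiplies a bounded kernel. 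The singular pieces ($p'=0$) then act on $h_1'\in L_2$ and are controlled by Lemma~\ref{L:B1}(i); the regular pieces ($p'=1$) are controlled by Lemma~\ref{L:B1p}; together with the embedding $H^r\hookrightarrow W^1_\infty$ this delivers the required bound on $\|T_1\|_2$.

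The local Lipschitz continuity assertion then follows, exactly as in the proof of Lemma~\ref{L:B1}, from the algebraic identity \eqref{difference} combined with the $L_2$-estimate just established. The main technical obstacle will be the careful bookkeeping in the integration by parts step: $\partial_s K_\varrho$ produces terms involving $\partial_s(\delta h_i/t_{[s]})$ and $\partial_s(1/t_{[s]})=-\tfrac12\bigl(1+1/t_{[s]}^2\bigr)$, some of which formally introduce further $1/t_{[s]}^2$ singularities; these must be combined with the remaining $\delta_{[\tau,s]}h_1$ factor to fit back into the Calderón-type structure \eqref{Bnmp} with the correct multi-linearity in $(h_1,\ldots,h_n)$.
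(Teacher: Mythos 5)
Your skeleton coincides with the paper's own proof: splitting $\beta(\tau-s)=\beta(\tau)-\delta_{[\tau,s]}\beta$ gives $B^0_{n,m}(\varrho)[h,\beta]=\beta\,T_1+T_2$, your $T_2$ is exactly the paper's term $E_2$ and your estimate of it is correct (putting the H\"older bound on $\beta$ and the $L_2$-bound on $\delta_{[\tau,s]}h_1$ is a harmless variant of the paper's bookkeeping, which does the reverse; both produce the integrable weight $|s|^{r-5/2}$), and reducing the remaining piece to $\|T_1\|_2\leq C\|h_1\|_{H^1}\prod_{i\geq2}\|h_i\|_{H^r}$ via $\|\beta\|_\infty\leq C\|\beta\|_{H^{r-1}}$ is also what the paper does.

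The gap is in your treatment of $T_1$. After the integration by parts based on $1/t_{[s]}^2=-2\p_s(1/t_{[s]})-1$, only the term in which $\p_s$ hits $\delta_{[\tau,s]}h_1$ produces the density $h_1'$; it is indeed a $B^0_{n-1,m}$-operator acting on $h_1'\in L_2(\bT)$ and is covered by Lemma~\ref{L:B1}\,(i). In all remaining terms the derivative falls on $K_\varrho$ and the factor $\delta_{[\tau,s]}h_1$ survives, and these are \emph{not} ``$\delta_{[\tau,s]}h_1$ times a bounded kernel'': when $\p_s$ hits $\delta_{[\tau,s]}h_j/t_{[s]}$ with $j\geq2$, or a denominator factor, one obtains integrands of the schematic form $\frac{\delta_{[\tau,s]}h_1}{t_{[s]}}\cdot\frac{h_j'(\tau-s)}{t_{[s]}}\cdot(\text{bounded})$ or $\frac{\delta_{[\tau,s]}h_1}{t_{[s]}}\cdot\frac{\delta_{[\tau,s]}h_j}{t_{[s]}}\cdot\frac{1}{t_{[s]}}\cdot(\text{bounded})$, i.e.\ kernels that are still of order $s^{-2}$ against the merely-$H^1$ increment $\delta_{[\tau,s]}h_1$. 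Pointwise, $|\delta_{[\tau,s]}h_1|\lesssim\|h_1'\|_2|s|^{1/2}$ only yields the non-integrable weight $|s|^{-3/2}$; Lemma~\ref{L:B1}\,(i) does not apply because it needs $\|h_1'\|_\infty$ in every $h$-slot, and Lemma~\ref{L:B1p} only covers $p\geq1$. In fact these terms are again operators of type $B^0_{n',m'}$ with one argument only in $H^1(\bT)$ and density ($h_j'$ or $\varrho_k'$) only in $H^{r-1}(\bT)$ --- precisely the object Lemma~\ref{L:B4} is meant to bound --- so ``fitting them back into the structure \eqref{Bnmp}'' is circular. The missing ingredient is a quantitative use of the extra regularity $h_j',\varrho_k'\in H^{r-1}(\bT)$ with $r-1>1/2$: the paper splits $\p_s(\delta_{[\tau,s]}h_j/t_{[s]})$ (and the analogous denominator derivatives) into a bounded part plus a remainder of size $|\delta_{[\tau,s]}h_j-s\,h_j'(\tau-s)|/s^2$, and estimates the remainder in $L_2$ with respect to $\tau$ by Minkowski's inequality, the fundamental theorem of calculus and $\|h_j'-\sfT_a h_j'\|_2\leq C|a|^{r-1}\|h_j\|_{H^r}$, which produces the integrable weight $|s|^{r-5/2}$ while the factor $\delta_{[\tau,s]}h_1/s$ is only used through $[h_1]_{{\rm C}^{1/2}}\lesssim\|h_1\|_{H^1}$. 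Some such $L_2$-translation argument is indispensable to close your estimate of $T_1$; kernel bookkeeping together with Lemmas~\ref{L:B1} and~\ref{L:B1p} alone cannot do it.
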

\begin{proof}
We may assume that $\beta\in{\rm C}^\infty(\bT)$. To start, we set for $s\in(-\pi,\pi)$ and $\tau\in\R$
\[
\Theta(\tau,s):= \frac{1}{\pi}  \frac{s^2}{t_{[s]}^2}\bigg(\prod\limits_{i=2}^n\frac{\delta_{[\tau,s]}h_i}{t_{[s]}}\bigg)
 \prod\limits_{i=1}^m\Big[(\varrho_i(\tau)+\varrho_i(\tau-s))^2+\Big(\frac{\delta_{[\tau,s]}\varrho_i}{t_{[s]}}\Big)^2\Big]^{-1},
\]
so that 
\[
B_{n,m}^0(\varrho)[h,\beta](\tau)=\PV\int_{-\pi}^\pi \frac{\delta_{[\tau,s]}h_1}{s^2}\Theta(\tau,s)\beta(\tau-s)\,{\rm d}s,\qquad \tau\in\R.
\]
In view of the identities
\[
\frac{\delta_{[\tau,s]}h_1}{s^2}=\frac{h_1'(\tau-s)}{s }-\p_s\Big(\frac{\delta_{[\tau,s]}h_1}{s}\Big)\qquad\text{and}\qquad
\p_s(\delta_{[\tau,s]}\beta)=\beta'(\tau-s),
\]
 we have $B_{n,m}^0(\varrho)[h,\beta]=E_1+E_2+E_3,$
 where
 \begin{align*}
 E_1(\tau)&:=\beta(\tau)\PV\int_{-\pi}^\pi\frac{h_1'(\tau-s)}{s}\Theta(\tau,s)\,{\rm d}s,\\
  E_2(\tau)&:=-\PV\int_{-\pi}^\pi\Big(\frac{\delta_{[\tau,s]}h_1}{s}\Big)
  \Big(\frac{\delta_{[\tau,s]}\beta}{s}\Big)\Theta(\tau,s)\,{\rm d}s,\\
   E_3(\tau)&:=\beta(\tau)\PV\int_{-\pi}^\pi\frac{\delta_{[\tau,s]}h_1}{s}\p_s\Theta(\tau,s)\,{\rm d}s.
 \end{align*}
 
 \noindent{\em Estimate for $E_1$.}
Given $\tau\in\R$, we have
 \begin{align*}
  E_1(\tau)&=\beta(\tau)B_{n-1,m}^0(\rho)[h_2,\ldots,h_n,h_1'](\tau)
  +\beta(\tau)\int_{-\pi}^\pi\frac{h_1'(\tau-s)}{s}\frac{s-2t_{[s]}}{s}\Theta(\tau,s)\,{\rm d}s,
 \end{align*}
 and, using Minkowski's inequality, Lemma~\ref{L:B1}, and the inequalities \eqref{inequalities}, we get
 \begin{equation}\label{EstT1}
 \begin{aligned}
 \|E_1\|_2&\leq C\|\beta\|_\infty\Big(\prod_{i=2}^n\|h_i'\|_{\infty}\Big)\Big[\|h_1'\|_2+\int_{-\pi}^\pi\Big(\int_{-\pi}^\pi |h_1'(\tau-s)|^{2}\,{\rm d}\tau\Big)^{1/2}\,{\rm d}s\Big]\\
& \leq C\|\beta\|_\infty\|h_1'\|_2\prod_{i=2}^n\|h_i'\|_{\infty}.
\end{aligned}
 \end{equation}
 
  \noindent{\em Estimate for $E_2$.} Using Minkowski's inequality, Hölder's inequality, the inequalities~\eqref{inequalities}, 
  and the estimate~$|\delta_{[\tau,s]}h_1|\leq [h_1]_{{\rm C}^{1/2}}|s|^{1/2}$,  where $[\cdot]_{{\rm C}^{1/2}}$ denotes the standard H\"older seminorm,  we get
   \begin{equation}\label{EstT2}
 \begin{aligned}
 \|E_2\|_2&\leq C [h_1]_{{\rm C}^{1/2}}\Big(\prod_{i=2}^n\|h_i'\|_{\infty}\Big)
 \int_{-\pi}^\pi\frac{\|\sfT_s\beta-\beta\|_2}{|s|^{3/2}}  \,{\rm d}s\\
& \leq C\|\beta\|_{H^{r-1}}\|h_1\|_{H^1}\Big(\prod_{i=2}^n\|h_i'\|_{\infty}\Big) 
\Big(\int_{-\pi}^\pi |s|^{2r-4}   \,{\rm d}s\Big)^{1/2}\\
& \leq C\|\beta\|_{H^{r-1}}\|h_1\|_{H^1}\prod_{i=2}^n\|h_i'\|_{\infty}.
\end{aligned}
 \end{equation} 
 
     \noindent{\em Estimate for $E_3$.} The integral $E_3$ can be represented as a sum
     \[
E_3=S_1+\sum_{j=2}^nS_{2,j} +   \sum_{k=1}^mS_{3,k},
     \]
   where, for $\tau\in\R$,
   \begin{align*}
   |S_1(\tau)|&\leq C\|\beta\|_\infty\Big(\prod_{i=2}^n\|h_i'\|_{\infty}\Big)
    \int_{-\pi}^\pi \Big|\frac{\delta_{[\tau,s]}h_1}{s}\p_s\Big(\frac{s^2}{t_{[s]}^2}\Big)\Big|\,{\rm d}s,\\
    |S_{2,j}(\tau)|&\leq C\|\beta\|_\infty\Big(\prod_{i=2,i\neq j}^n\|h_i'\|_{\infty}\Big)
    \int_{-\pi}^\pi \Big|\frac{\delta_{[\tau,s]}h_1}{s}\p_s\Big(\frac{\delta_{[\tau,s]}h_j}{t_{[s]}}\Big)\Big|\,{\rm d}s,\\
        |S_{3,k}(\tau)|&\leq C\|\beta\|_\infty\Big(\prod_{i=2}^n\|h_i'\|_{\infty}\Big)
    \int_{-\pi}^\pi \Big|\frac{\delta_{[\tau,s]}h_1}{s}\Big|
    \bigg(|   \varrho_j'(\tau-s) \big|+  \Big| \p_s \Big(\frac{\delta_{[\tau,s]}\varrho_j}{t_{[s]}}\Big|\bigg)\,{\rm d}s.
\end{align*}     
 Since   $|\p_s\big( s^2/t_{[s]}^2\big)|\leq C|s|$ for $s\in(-\pi,\pi)$ by \eqref{inequalities},   we have
   \begin{equation}\label{EstS1}
 \|S_1\|_2\leq C \|\beta\|_{\infty}\|h_1\|_{\infty}\prod_{i=2}^n\|h_i'\|_{\infty}.
 \end{equation}
 To estimate the terms $S_{2,j}$ and $S_{3,k}$  we use the same strategy that we thus detail only for the first term.
 Since for $\tau\in\R$ and $s\in(-\pi,\pi)$ with $s\neq 0$ we have by \eqref{inequalities}
 \[
 \Big|\p_s\Big(\frac{\delta_{[\tau,s]}h_j}{t_{[s]}}\Big)\Big|
 \leq C\|h_j\|_{H^r}+C\frac{\big|\delta_{[\tau,s]}h_j-h_j'(\tau-s)\big|}{s^2},
 \]
Minkowski's inequality together with the embedding $H^1(\bT)\hookrightarrow {\rm C}^{1/2}(\bT)$ leads to
\begin{align*}
\|S_{2,j}\|_2&\leq C\|\beta\|_\infty\|h_1\|_{H^1}\Big(\prod_{i=2,i\neq j }^n\|h_i'\|_{H^r}\Big)
  \bigg(\|h_j\|_{H^r}\int_{-\pi}^\pi \frac{1}{|s|^{1/2}}\,{\rm d} s  
+ J_j\bigg),
\end{align*}
where, using also the fundamental theorem of calculus and H\"olders inequality,
\begin{align*}
J_j&= \int_{-\pi}^\pi\frac{1}{|s|^{5/2}}\Big(\int_{-\pi}^\pi\big|\delta_{[\tau,s]}h_j- sh_j'(\tau-s)\big|^2\,{\rm d}\tau\Big)^{1/2}\,{\rm d}s \\
&\leq\int_0^1\int_{-\pi}^\pi\frac{1}{|s|^{3/2}}\Big(\int_{-\pi}^\pi\big| h_j'(\tau+as)-h_j'(\tau)\big|^2\,{\rm d}\tau\Big)^{1/2}\,{\rm d}s\,{\rm d}a\\
&\leq \int_{-\pi}^\pi\frac{\|h_j'-\sfT_s h_j'\|_ 2}{|s|^{3/2}} \,{\rm d}s \leq C\|h_j\|_{H^r}\Big(\int_{-\pi}^\pi |s|^{2r-4}  \,{\rm d}s \Big)^{1/2}.
\end{align*}
Consequently, for $2\leq j\leq n$ and $1\leq k\leq m$ we have
  \begin{equation}\label{EstS23}
 \|S_{2,j}\|_2+\|S_{3,k}\|_2\leq C \|\beta\|_{\infty}\|h_1\|_{H^1}\prod_{i=2}^n\|h_i\|_{H^r}.
 \end{equation}
 The desired estimate~\eqref{eq:B4} follows now from \eqref{EstT1}-\eqref{EstS23}.
\end{proof}

We next establish the proof of Lemma~\ref{L:B3}.
\begin{proof}[Proof of Lemma~\ref{L:B3}] To start, we fix $\varrho=(\varrho_1,\ldots,\varrho_m)\in\cV_{r,M}^m$, $h=(h_1,\ldots,h_n) \in H^r(\bT)^n $, and~${\beta \in H^{r-1}(\bT)}$.
We first establish the claim for $r\in(3/2,2)$, in which case  it remains  to estimate the $[\cdot]_{H^{r-1}}$-seminorm of $B_{n,m}^0(\varrho)[h,\beta]$ in view of  \eqref{equiv} and \eqref{eq:B2}.
To this end, using \eqref{difference} we write 
\begin{equation}\label{ffor}
\sfT_\tau \big(B_{n,m}^0(\varrho)[h,\beta]\big)-B_{n,m}^0(\varrho)[h,\beta]=\sum_{j=1}^3E_j(\tau),\qquad\tau\in\R,
\end{equation} 
where
\begin{align*}
E_1(\tau)&:=B_{n,m}^0(\sfT_\tau\varrho)[\sfT_\tau h,\sfT_\tau\beta-\beta],\\
E_2(\tau)&:=\sum_{j=1}^nB_{n,m}^0(\sfT_\tau\varrho)[h_1,\ldots,h_{j-1}, \sfT_\tau h_j-h_j,\sfT_\tau h_{j+1} ,\ldots, \sfT_\tau h_n,  \beta],\\
E_3(\tau)&:=\sum_{j=1}^m\Big\{
B_{n+2,m+1}^{0}(\varrho_1,\ldots,\varrho_j,\sfT_\tau\varrho_j,\ldots,\sfT_\tau\varrho_m)
[h, \varrho_j+\sfT_\tau\varrho_j, \varrho_j-\sfT_\tau\varrho_j,\beta]\\[-2ex]
&\hspace{1.275cm}+( \varrho_j^2-\sfT_\tau\varrho_j^2)B_{n,m+1}^{0}(\varrho_1,\ldots,\varrho_j,\sfT_\tau\varrho_j,\ldots,\sfT_\tau\varrho_m)[h,\beta]\\
&\hspace{1.275cm}+(\varrho_j+\sfT_\tau\varrho_j)B_{n,m+1}^{0}(\varrho_1,\ldots,\varrho_j,\sfT_\tau\varrho_j,\ldots,\sfT_\tau\varrho_m)[h,(\varrho_j-\sfT_\tau\varrho_j)\beta]\\
&\hspace{1.275cm}+( \varrho_j-\sfT_\tau\varrho_j)B_{n,m+1}^{0}(\varrho_1,\ldots,\varrho_j,\sfT_\tau\varrho_j,\ldots,\sfT_\tau\varrho_m)[h,(\varrho_j+\sfT_\tau\varrho_j)\beta]\\ 
&\hspace{1.275cm}+ B_{n,m+1}^{0}(\varrho_1,\ldots,\varrho_j,\sfT_\tau\varrho_j,\ldots,\sfT_\tau\varrho_m)[h,( \varrho_j^2-\sfT_\tau\varrho_j^2)\beta]
\Big\}.
\end{align*}
Applying Lemma~\ref{L:B1} and Lemma~\ref{L:B4}, we deduce for some constant $C>1$ that for all $\tau\in\R$ we have
\begin{align*}
\sum_{j=1}^3\|E_j(\tau)\|_2&\leq C\bigg(\big(\|\sfT_\tau\beta-\beta\|_2+\|\beta\|_{H^{r-1}}\|\sfT_\tau\varrho-\varrho\|_{H^1}\big)\prod_{i=1}^n\|h_i\|_{H^r}\\
&\hspace{1cm}+\|\beta\|_{H^{r-1}}\sum_{j=1}^n \|\sfT_\tau h_j-h_j\|_{H^1}\prod_{i=1,i\neq j}^n\|h_i\|_{H^r}\bigg),
\end{align*}
and the assertion \eqref{eq:B3'} with $r\in(3/2,2)$ follows now directly in view of \eqref{equiv}.

We  next address the case $r=2$. Dividing \eqref{ffor} by $\tau\neq0$, we infer from Lemma~\ref{L:B1} and Lemma~\ref{L:B4} that $\big(\sfT_\tau \big(B_{n,m}^0(\varrho)[h,\beta]\big)-B_{n,m}^0(\varrho)[h,\beta]\big)/\tau$ converges in $L_2(\bT)$
as $\tau\to0$ towards the weak derivative
\begin{equation}\label{forder}
\begin{aligned}
(B_{n,m}^0(\varrho)[h,\beta])'&=B_{n,m}^0(\varrho)[ h,\beta']+\sum_{j=1}^nB_{n,m}^0(\varrho)[h_1,\ldots,h_{j-1}, h_j', h_{j+1} ,\ldots,  h_n,  \beta]\\[-1ex]
&\quad-2\sum_{j=1}^m\Big\{B_{n+2,m+1}^{0}(\varrho)[h, \varrho_j, \varrho_j',\beta]+\varrho_j\varrho_j'B_{n,m+1}^{0}(\varrho,\varrho_j)[h,\beta]\\[-1ex]
&\hspace{1.875cm}+\varrho_jB_{n,m+1}^{0}(\varrho,\varrho_j)[h,\varrho_j'\beta]+ \varrho_j'B_{n,m+1}^{0}(\varrho , \varrho_j)[h, \varrho_j \beta]\\
&\hspace{1.875cm}+ B_{n,m+1}^{0}(\varrho ,\varrho_j)[h,\varrho_j\varrho_j'\beta]
\Big\}.
\end{aligned}
\end{equation}
Thus, $B_{n,m}^0(\varrho)[h,\beta]\in H^1(\bT)$ and the estimate \eqref{eq:B3'} with $r=2$ follows now from Lemma~\ref{L:B1} and Lemma~\ref{L:B4}.

Since the Lipschitz continuity property for $r\in(3/2,2]$ is a straightforward consequence of \eqref{difference} and \eqref{eq:B3'}, this completes the proof.
\end{proof}

We are now in a position to prove Lemma~\ref{L:B3p}.
\begin{proof}[Proof of Lemma~\ref{L:B3p}]
We first  assume that  $(h,\beta)\in{\rm C}^\infty(\bT)^{n+1}$. 
 Denoting by $K_{n,m}^p=K_{n,m}^p(\tau,s)$  the integral kernel of $B_{n,m}^p(\rho)[h,\cdot]$; that is,
\begin{equation*}
B_{n,m}^p(\varrho )[h,\beta](\tau)=\int_{-\pi}^\pi K_{n,m}^p(\tau,s)\beta(\tau-s)\,{\rm d}s,\qquad \tau\in\R,
\end{equation*}
the theorem on differentiation of parameter integrals ensures that~$B_{n,m}^p(\varrho )[h,\beta]\in{\rm C}^1(\bT)$ with
\begin{equation}\label{derH1}
\begin{aligned}
(B_{n,m}^p(\varrho )[h,\beta]\big)'(\tau)&=\int_{-\pi}^\pi \p_\tau K_{n,m}^p(\tau,s)\beta(\tau-s)-K_{n,m}^p(\tau,s)\p_s(\beta(\tau-s))\,{\rm d}s\\
&=\PV\int_{-\pi}^\pi \big[(\p_\tau+\p_s)K_{n,m}^p(\tau,s)\big]\beta(\tau-s)\,{\rm d}s
\end{aligned}
\end{equation}
for $\tau\in\R$, where integration by parts is used in the last step.
Hence,
\begin{align*}
(B_{n,m}^p(\varrho )[h,\beta]\big)'&=\frac{p-n-1}{2}\big(B_{n,m}^{p-1}(\varrho )[h,\beta]+B_{n,m}^{p+1}(\varrho )[h,\beta]\big)\\
&\quad+\sum_{j=1}^nh_j'B_{n-1,m}^{p-1}(\varrho )[h_1,\ldots,h_{j-1},h_{j+1},\ldots, h_n,\beta]\\
&\quad-\sum_{j=1}^m\Big[2\varrho_j\varrho_j'B_{n,m+1}^{p}(\varrho,\varrho_j )[h,\beta]+2\varrho_j'B_{n,m+1}^{p}(\varrho,\varrho_j )[h,\varrho_j\beta]\\
&\hspace{1.5cm}+2\varrho_j'B_{n+1,m+1}^{p-1}(\varrho,\varrho_j )[h,\varrho_j,\beta]-B_{n+2,m+1}^{p-1}(\varrho,\varrho_j )[h,\varrho_j,\varrho_j,\beta]\\
&\hspace{1.5
cm}-B_{n+2,m+1}^{p+1}(\varrho,\varrho_j )[h,\varrho_j,\varrho_j,\beta]\Big],
\end{align*} 
the right-hand side of the latter identity belonging to~$L_2(\R)$  in view of Lemma~\ref{L:B1} and Lemma~\ref{L:B1p},  even when merely assuming~$h\in{\rm C}^1(\bT)^{n}$ and $\beta\in L_2(\bT)$. 
The desired claims follow now by a standard density argument from Lemma~\ref{L:B1}, Lemma~\ref{L:B1p},  and \eqref{difference}.
\end{proof}

We finally address the Fr\'echet  differentiability of the mapping $[\rho\mapsto {\sf B}_{n,m}^p(\rho)]$ defined in~\eqref{sfB} and prove that this mapping is actually smooth.  
\begin{lemma}\label{L:B5} 
 Given $n,\,m,\, p\in\N$ with $0\leq p\leq n+1$, the  following properties hold:
 \begin{itemize}
 \item[(i)] The mapping $[\rho\mapsto \sfB_{n,m}^0(\rho)]:\cV_r\to\kL(H^{r-1}(\bT))$ is smooth.
\item[(ii)]  If $1\leq p\leq n+1$, then $[\rho\mapsto \sfB_{n,m}^p(\rho)]:\cV_r\to\kL(L_2(\bT),H^1(\bT))$ is smooth.
  \end{itemize}
\end{lemma}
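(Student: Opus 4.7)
The plan is to reduce both (i) and (ii) to smoothness of the more flexible map $\varrho\mapsto B_{n,m}^p(\varrho)$ on $\cV_{r,M}^m$ into the appropriate space of multilinear operators. Since the diagonal embeddings $\rho\mapsto(\rho,\dots,\rho)$ in the $\varrho$-slots and the $h$-slots of $B_{n,m}^p(\varrho)[h,\cdot]$ are linear and $h\mapsto B_{n,m}^p(\varrho)[h,\cdot]$ is symmetric $n$-linear, the chain rule reduces the claim to smoothness of $\varrho\mapsto B_{n,m}^p(\varrho)$ with values in $\kL^n_{\mathrm{sym}}(H^{r}(\bT),\kL(H^{r-1}(\bT)))$ for (i) and in $\kL^n_{\mathrm{sym}}({\rm C}^1(\bT),\kL(L_2(\bT),H^1(\bT)))$ for (ii), using in the latter case the embedding $H^r(\bT)\hookrightarrow{\rm C}^1(\bT)$ valid for $r>3/2$.

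To compute the first Fr\'echet derivative, the algebraic identity \eqref{difference} is the key tool. Given $\sigma\in H^r(\bT)^m$ and $|t|$ small enough that $\bar\varrho:=\varrho+t\sigma$ still belongs to some $\cV_{r,M'}^m$, the differences in \eqref{difference} read $\bar\varrho_j-\varrho_j=t\sigma_j$ and $\bar\varrho_j^2-\varrho_j^2=t\sigma_j(2\varrho_j+t\sigma_j)$, so that the right-hand side equals $t$ times a finite sum of operators of the form $B_{n',m'}^p$ with $n'\in\{n,n+2\}$ and $m'=m+1$, evaluated at tuples close to $\varrho$ and applied to arguments built from $h$, $\sigma_j$ and $\varrho_j$ via pointwise multiplication. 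Dividing by $t$ and letting $t\to 0$, the limit exists in the pertinent operator topology by virtue of the local Lipschitz continuity established in Lemmas~\ref{L:B1} and~\ref{L:B3} (for $p=0$) and in Lemmas~\ref{L:B1p} and~\ref{L:B3p} (for $1\leq p\leq n+1$), combined with the fact that $H^r(\bT)$ acts as a continuous multiplier on all function spaces involved (using \eqref{algebra} and $r-1>1/2$). The resulting Gateaux derivative is bounded and linear in $\sigma$, hence coincides with the Fr\'echet derivative, and depends continuously on $\varrho$.

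The crucial structural observation is that the formula obtained for $dB_{n,m}^p(\varrho)[\sigma]$ is itself a finite combination, with coefficients polynomial in $\varrho$ and $\sigma$, of operators of the same family $B_{n',m'}^p$ with identical $p$. One can therefore iterate: applying \eqref{difference} termwise to each such $B_{n',m'}^p(\varrho)$ and passing to the limit produces the second derivative as a further expression of the same shape, and so on. Since $p$ is preserved under differentiation, the singular ($p=0$) and non-singular ($1\leq p\leq n+1$) cases remain separated throughout the induction, and Lemmas~\ref{L:B1}--\ref{L:B3p} continue to supply the requisite boundedness at every order. A straightforward induction on the order of differentiation then shows that $\varrho\mapsto B_{n,m}^p(\varrho)$ is of class $\mathrm{C}^k$ for every $k\in\N$, which yields the smoothness claimed in (i) and (ii).

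The only real obstacle is bookkeeping: at each stage one must verify that the polynomial prefactors in $\varrho$ and $\sigma$ act as continuous multipliers on the inner and outer slots of $\kL(H^{r-1}(\bT))$ or $\kL(L_2(\bT),H^1(\bT))$, and that the enlarged indices $(n',m')$ produced at each differentiation still fall within the scope of Appendix~\ref{SEC:A}. Both points are ensured by the arbitrariness of $n,m$ in those lemmas together with the $H^r$-algebra property; no new estimates beyond those already in Appendix~\ref{SEC:A} are required.
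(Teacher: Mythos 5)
Your proposal is correct and follows essentially the same route as the paper: the identity \eqref{difference} is used to identify the derivative, the mapping and local Lipschitz lemmas of Appendix~\ref{SEC:A} (Lemmas~\ref{L:B1}--\ref{L:B3p}) supply the required bounds, and the closure of the operator family under differentiation drives an induction on the order of the derivative. The paper formalizes the same idea by introducing the auxiliary operators $\sfB^{p,q}_{n,m}$ with $q$ extra slots and proving the explicit derivative formula \eqref{forfreder} together with quadratic remainder estimates \eqref{esfin1}--\eqref{esfin2}, whereas you differentiate only in the $\varrho$-slots and invoke continuity of the Gateaux derivative; this is a difference of presentation rather than of substance.
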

\begin{proof}
To start, we introduce for $q\in\N$ the multilinear operator
\[
\sfB_{n,m}^{p,q}(\rho)[f][\beta]:=B_{n+q,m}^{p}(\rho,\ldots,\rho)
[\underset{n}{\underbrace{\rho,\ldots,\rho}},f,\beta],
\]
where $f:=(f_1,\ldots,f_q)$, and infer from Lemma~\ref{L:B3} and Lemma~\ref{L:B3p} that  
\[ 
\sfB_{n,m}^{0,q}\in {\rm C}^{1-}(\cV_r, \kL^q_{\rm sym}(H^r(\bT),\kL(H^{r-1}(\bT)))),
\]
 respectively 
\[
\sfB_{n,m}^{p,k}\in {\rm C}^{1-}(\cV_r, \kL^q_{\rm sym}({\rm C}^1(\bT),\kL(L_2(\bT), H^1(\bT))))\qquad\text{for $1\leq p\leq n+1$.}
\]
Given  $\rho\in\cV_r$, we next prove that both  mappings are Fr\'echet differentiable with 
\begin{equation}\label{forfreder}
\begin{aligned}
\p\sfB_{n,m}^{p,q}(\rho)[h][f]&=\big(n\sfB_{n-1,m}^{p,q+1}(\rho)-2m\sfB_{n+1,m+1}^{p,q+1}(\rho)\big)[h,f]\\
&\quad-2m\rho\big( h\sfB_{n,m+1}^{p,q}(\rho)[f]+\sfB_{n,m+1}^{p,q}(\rho)[f][h\,\cdot]\big)\\
&\quad-2m\big( h\sfB_{n,m+1}^{p,q}(\rho)[f][\rho\cdot]+\sfB_{n,m+1}^{p,q}(\rho)[f][\rho h\,\cdot]\big)
\end{aligned}
\end{equation}
for   $h\in H^r(\bT)$ and $f\in H^{r}(\bT)^q$ if $p=0$, respectively with 
$f\in {\rm C}^1(\bT)^q$ if $1\leq p\leq n+1$. The assertion of the lemma is then a straightforward consequence of \eqref{forfreder}.

In order to prove~\eqref{forfreder},  we define for~$\rho\in\cV_r$, $h \in H^r(\bT)$ with~$\|h\|_{H^r}\ll 1$ (to ensure in particular that $\rho, \rho+h\in\cV_{r,M}$ for some $M>1$) and
 $(f,\beta)\in {\rm C^\infty}(\bT)^{q+1}$ 
the  rest
\begin{align*}
R^p(h)&:=\sfB_{n,m}^{p,q}(\rho+h)[f,\beta]-\sfB_{n,m}^{p,q}(\rho)[f,\beta]\\
&\quad\,\,-\big(n\sfB_{n-1,m}^{p,q+1}(\rho)-2m\sfB_{n+1,m+1}^{p,q+1}(\rho)\big)[h,f][\beta]\\
&\quad\,\,+2m\rho\big(h\sfB_{n,m+1}^{p,q}(\rho)[f][\beta]+\sfB_{n,m+1}^{p,q}(\rho)[f][h\beta]\big)\\
&\quad\,\,+2m\big(h\sfB_{n,m+1}^{p,q}(\rho)[f][\rho\beta]+\sfB_{n,m+1}^{p,q}(\rho)[f][\rho h\beta]\big).
\end{align*}
It remains to show  that there exists a constant $C>0$ such that  for all $h\in H^r(\bT)$ with~$\|h\|_{H^r}\ll 1$  and all $(f,\beta)\in {\rm C^\infty}(\bT)^{q+1}$   we have 
\begin{equation}\label{esfin1}
\|R^0(h)\|_{H^{r-1}}\leq C\|h\|_{H^r}^2\|\beta\|_{H^{r-1}}\prod_{i=1}^{q}\|f_i\|_{H^r},  
\end{equation}
respectively, for $1\leq p\leq n+1$,
\begin{equation}\label{esfin2}
\|R^p(h)\|_{H^1}\leq C\|h\|_{H^r}^2\|\beta\|_{2}\prod_{i=1}^{q}\|f_i\|_{{\rm C}^1}.
\end{equation}
Using elementary algebraic manipulations, we may write $R^p(\rho)$  as a linear combination of terms of the form
\[
P(\rho)h^{k_1}B_{\wt n,\wt m}^{p}(\underset{\text{$\ell$}}{\underbrace{\rho,\ldots,\rho}}, h+\rho,\ldots,h+\rho)
[f,\rho,\ldots,\rho,\underset{\text{$k_2$}}{\underbrace{h,\ldots,h}},Q(\rho)h^{k_3}\beta],
\]
where $P,Q$ are polynomials, $\ell, \wt n,\wt m\in\N$ satisfy  $n+q\leq \wt n $ (in particular $p\leq \wt n+1$) and~$\ell\leq \wt m$, and~${k_1,k_2,k_3\in\N}$ fulfill   
\[
k_1+k_2+k_3\geq 2.
\]   
The desired estimates \eqref{esfin1}-\eqref{esfin2} are now straightforward consequences of Lemma~\ref{L:B3} (if~${p=0}$) and Lemma~\ref{L:B3p} (if $1\leq p\leq n+1$).
\end{proof}

\subsection*{A second family of (singular) integral operators} 
We introduce a  further family of singular integral operators used in the proof of Lemma~\ref{L:AB3} below by defining, 
for given integers~$m,\,n\in\N$    and ${\varrho:=(\varrho_1,\ldots,\varrho_m)\in\cV^m}$,  
\begin{equation}\label{Hnmp}
H_{n,m}(\varrho)[h,\beta](\tau):=\displaystyle\frac{1}{\pi}\PV\int_{-\pi}^\pi\frac{\prod\limits_{i=1}^n\frac{\delta_{[\tau,s]}h_i}{s}}
{\prod\limits_{i=1}^m\Big[(\varrho_i(\tau)+\varrho_i(\tau-s))^2+\Big(\frac{2\delta_{[\tau,s]}\varrho_i}{s}\Big)^2\Big]}
\frac{\beta(\tau-s)}{s}\,{\rm d}s
\end{equation}
where $h=(h_1,\ldots, h_n):\R\to\R^n$ is Lipschitz continuous, $\beta\in L_2(\bT)$, and~${\tau\in\R}$.
When the components of $\varrho$ and $h$ are equal to $\rho\in\cV_r$, we set
\begin{equation}\label{sfH}
 \sfH_{n,m}(\rho):=H_{n,m}(\rho,\ldots,\rho )[\rho,\ldots,\rho,\cdot].
\end{equation} 

\begin{lemma}\label{L:C1} Let $n,\,m\in\N$ and $M>0$  be given.
\begin{itemize}
\item[(i)] There exists a constant $C=C(M)>0$ such  that for all ${\varrho \in\cV_{r,M}^m}$,   $\theta\in\R$, and  
 all  Lipschitz functions~$h:\R\to\R^n $ we have 
\begin{equation}\label{eq:C1}
\|H_{n,m}(\varrho)[h,\cdot]\|_{\kL(L_2(\bT), L_2((-\pi+\theta,\pi+\theta)))}\leq C\prod_{i=1}^n\|h_i'\|_{\infty}.
\end{equation} 
\item[(ii)] Assume that $n\geq 1$. Then, there exists a constant $C=C(M)>0$  with the property that
for all~$\beta \in H^{r-1}(\bT)$, $h \in H^r(\bT)^n$,   and~$\varrho \in\cV_{r,M}^m$ it holds that
\begin{equation}\label{eq:C2}
\|H_{n,m}(\varrho)[h,\beta]\|_{2}\leq C\|h\|_{H^1}\|\beta\|_{H^{r-1}}\prod_{i=2}^n\|h_i\|_{H^r}.
\end{equation} 
\item[(iii)] Given $\rho\in\cV_2$ and $\beta \in H^{1}(\bT)$, it holds that $\sfH_{n,m}(\rho)[\beta]\in H^1(\bT)$ with 
\begin{equation}\label{eq:C3}
\begin{aligned}
(\sfH_{n,m}(\rho)[\beta])'&=\sfH_{n,m}(\rho)[\beta']+nH_{n,m}(\rho,\ldots,\rho)[\rho,\ldots,\rho,\rho',\beta]\\
&\quad-2m\big\{\rho\sfH_{n,m+1}(\rho)[\rho'\beta]+\rho\rho'\sfH_{n,m+1}(\rho)[\beta]\\
&\hspace{1.55cm}+\rho'\sfH_{n,m+1}(\rho)[\rho\beta]+ \sfH_{n,m+1}(\rho)[\rho\rho'\beta]\\
&\hspace{1.55cm}+4H_{n+2,m+1}(\rho,\ldots,\rho)[\rho,\ldots,\rho,\rho',\beta]\big\}.
\end{aligned}
\end{equation} 
\end{itemize}
\end{lemma}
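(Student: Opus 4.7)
The three claims parallel results already established for $B_{n,m}^0$ in Appendix~\ref{SEC:A}: (i) corresponds to Lemma~\ref{L:B1}(i), (ii) to Lemma~\ref{L:B4}, and the formula in (iii) to the differentiation identity~\eqref{forder}. This is not a coincidence, since the kernel of $H_{n,m}(\varrho)[h,\beta]$ differs from that of $B_{n,m}^0(\varrho)[h,\beta]$ only by the substitution of $s$ for $t_{[s]}=\tan(s/2)$ in the finite differences and denominators, and by~\eqref{inequalities} the ratio $s/t_{[s]}$ is a smooth bounded factor on $(-\pi,\pi)$. My plan is therefore to mirror the arguments of Appendix~\ref{SEC:A} with only cosmetic modifications, while keeping track of the additional feature that in~(i) the vector $h$ is only Lipschitz on $\R$ rather than $2\pi$-periodic.

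For (i), I would freeze coefficients at $\tau$ by verifying, using $\varrho_i\in H^r(\bT)\hookrightarrow \mathrm{C}^{r-1/2}(\bT)$ together with~\eqref{inequalities}, that
\[
\Big|\prod_{i=1}^m\big[(\varrho_i(\tau)+\varrho_i(\tau-s))^2+(2\delta_{[\tau,s]}\varrho_i/s)^2\big]^{-1}-\prod_{i=1}^m(4\omega_{\varrho_i}^2(\tau))^{-1}\Big|\leq C(M)|s|^{r-3/2},
\]
and then decompose $H_{n,m}(\varrho)[h,\beta]$ as a $\tau$-dependent prefactor times the singular integral $\wt G_n[h,\beta]$ obtained from $G_n$ by replacing $t_{[s]}$ by $s/2$, plus a remainder bounded by $C\|\beta\|_2\prod\|h_i'\|_\infty$ via Minkowski's inequality (the required integrability $r-5/2>-1$ is precisely $r>3/2$). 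The $L_2$-boundedness of $\wt G_n$ with constant $C\prod\|h_i'\|_\infty$ follows from a direct Calder\'on-commutator argument as in \cite[Lemma~A.3]{BM25x}, which does not require periodicity of $h$. Uniformity in $\theta$ is then a consequence of the $2\pi$-periodicity of $\varrho$ and $\beta$: the change of variable $\tau\mapsto\tau+\theta$ gives
\[
\|H_{n,m}(\varrho)[h,\beta]\|_{L_2((-\pi+\theta,\pi+\theta))}=\|H_{n,m}(\varrho)[\sfT_\theta h,\beta]\|_{L_2(\bT)},
\]
and $\|(\sfT_\theta h_i)'\|_\infty=\|h_i'\|_\infty$. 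Part~(ii) is then the verbatim analogue of Lemma~\ref{L:B4}: apply the identity $\delta_{[\tau,s]}h_1/s^2=h_1'(\tau-s)/s-\partial_s(\delta_{[\tau,s]}h_1/s)$, integrate by parts in $s$, and split the outcome into three terms $E_1,E_2,E_3$ controlled by Minkowski's and H\"older's inequalities together with the bound $|\delta_{[\tau,s]}h_1|\leq[h_1]_{\mathrm{C}^{1/2}}|s|^{1/2}$.

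For (iii), I would first invoke parts (i) and (ii) to check that every summand on the right-hand side of~\eqref{eq:C3} lies in $L_2(\bT)$, which is legitimate because $\cV_2\hookrightarrow \mathrm{C}^{3/2}(\bT)$ places both $\rho$ and $\rho'$ in $W^1_\infty(\bT)$. For $\rho,\beta\in\mathrm{C}^\infty(\bT)$ I would differentiate the integrand directly, using that for smooth data the PV integral commutes with $\partial_\tau$. Writing the integrand as $K(\tau,s)\beta(\tau-s)/s$ with $K=(\delta_{[\tau,s]}\rho/s)^n/D_0^m$ and $D_0=(\rho(\tau)+\rho(\tau-s))^2+(2\delta_{[\tau,s]}\rho/s)^2$, the identity
\[
\partial_\tau\big[K(\tau,s)\beta(\tau-s)/s\big]=(\partial_\tau K/s)\beta(\tau-s)+(K/s)\beta'(\tau-s)
\]
produces the $\sfH_{n,m}(\rho)[\beta']$ term from the second summand; the quotient rule yields $\partial_\tau K=n(\delta_{[\tau,s]}\rho/s)^{n-1}(\delta_{[\tau,s]}\rho'/s)/D_0^m-m(\delta_{[\tau,s]}\rho/s)^n/D_0^{m+1}\partial_\tau D_0$ with
\[
\partial_\tau D_0=2(\rho(\tau)+\rho(\tau-s))(\rho'(\tau)+\rho'(\tau-s))+8(\delta_{[\tau,s]}\rho/s)(\delta_{[\tau,s]}\rho'/s),
\]
and expanding the four cross-products in the first summand of $\partial_\tau D_0$ gives exactly the four $\sfH_{n,m+1}(\rho)$ contributions in~\eqref{eq:C3} with prefactors $\rho\rho'$, $\rho$, $\rho'$, and $1$; the second summand accounts for the $H_{n+2,m+1}$ term (with the expected coefficient $-8m=-2m\cdot 4$). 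The identity is then extended to $\rho\in\cV_2$ and $\beta\in H^1(\bT)$ by a density argument, using the local Lipschitz continuity of the maps appearing in parts~(i) and~(ii) exactly as in the proofs of Lemma~\ref{L:B3} and Lemma~\ref{L:B3p}. I expect the main obstacle to be the bookkeeping in~(ii): tracking precisely where each derivative lands (on $h_1$, on $\beta$, or on the auxiliary kernel factors) is delicate, but each individual step reduces to arguments already carried out for $B_{n,m}^0$.
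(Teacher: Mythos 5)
Your proposal is correct and follows essentially the same route as the paper, whose proof simply transfers the arguments of Lemma~\ref{L:B1}~(i) to part (i), of Lemma~\ref{L:B4} to part (ii), and of the derivation of \eqref{forder} to part (iii), exactly as you do. One cosmetic slip: in the translation argument for uniformity in $\theta$ the identity should read $\|H_{n,m}(\varrho)[h,\beta]\|_{L_2((-\pi+\theta,\pi+\theta))}=\|H_{n,m}(\sfT_\theta\varrho)[\sfT_\theta h,\sfT_\theta\beta]\|_{L_2(\bT)}$ (for general $\theta\in\R$ one must translate $\varrho$ and $\beta$ as well), which still yields the claim since $\sfT_\theta\varrho\in\cV_{r,M}$, $\|(\sfT_\theta h_i)'\|_\infty=\|h_i'\|_\infty$, and $\|\sfT_\theta\beta\|_2=\|\beta\|_2$.
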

\begin{proof} 
 Claim (i) can be established by following the arguments used in the proof of Lemma~\ref{L:B1}~(i), while assertion~(ii) is obtained by a similar approach to that in Lemma~\ref{L:B4}.
Finally, \eqref{eq:C3} follows from~(i) and (ii) by arguing as in the derivation of \eqref{forder}.  
\end{proof}

The operator $\sfH_{n,m}(\rho)$ is related to the operator $\sfB^0_{n,m}(\rho)$ in the sense that the difference
\begin{equation}\label{identbah}
\sfA_{n,m}(\rho):=\sfB^0_{n,m}(\rho)-2^{n+1}\sfH_{n,m}(\rho)
\end{equation}
is regularizing, as the next result shows.
\begin{lemma}\label{L:C2}
Given $n,\,m\in\N$ and $M>0$, there exists a constant $C=C(M)>0$ such that for all~${\rho\in\cV_{r,M}}$  it holds  that
\[
\|\sfA_{n,m}(\rho)\|_{\kL({\rm C}(\bT), {\rm C}^1(\bT))}\leq C.
\]
\end{lemma}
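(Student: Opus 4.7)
The plan is to exploit the cancellation between $\sfB^0_{n,m}(\rho)$ and $2^{n+1}\sfH_{n,m}(\rho)$ at leading order, writing $\sfA_{n,m}(\rho)$ as an integral operator with an absolutely integrable kernel.  The point is that the singular contributions agree up to the smooth factor $\phi(s):=s/(2\tan(s/2))$, which satisfies $\phi(0)=1$ and $\phi(s)-1=O(s^2)$.

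Using the identity $1/t_{[s]}=2\phi(s)/s$, I rewrite
\[
\sfB^0_{n,m}(\rho)[\beta](\tau) = \frac{2^{n+1}}{\pi}\PV\int_{-\pi}^\pi \frac{\phi(s)^{n+1}\,c(\tau,s)^n}{[A(\tau,s)+4\phi(s)^2\,c(\tau,s)^2]^m}\,\frac{\beta(\tau-s)}{s}\,\mathrm{d}s,
\]
where $c:=\delta_{[\tau,s]}\rho/s$ and $A:=(\rho(\tau)+\rho(\tau-s))^2$; the operator $2^{n+1}\sfH_{n,m}(\rho)$ has the same form with $\phi\equiv 1$.  Since both denominators are bounded above and below away from $0$ uniformly on $\cV_{r,M}$, the difference
\[
Q(\tau,s):=\frac{\phi^{n+1}}{[A+4\phi^2 c^2]^m}-\frac{1}{[A+4c^2]^m}
\]
vanishes at $\phi=1$, hence satisfies $|Q(\tau,s)|\leq C(M)\,s^2$.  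Consequently $\sfA_{n,m}(\rho)[\beta](\tau)=\int_{-\pi}^\pi \tilde K(\tau,s)\,\beta(\tau-s)\,\mathrm{d}s$ with $\tilde K:=(2^{n+1}/\pi)\,Q\,c^n/s$ and $|\tilde K(\tau,s)|\leq C(M)|s|$; no principal value is needed. This already delivers $\|\sfA_{n,m}(\rho)[\beta]\|_\infty \leq C(M)\|\beta\|_\infty$.

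For the $\mathrm{C}^1$ estimate I substitute $u=\tau-s$ and, by periodicity, write $\sfA_{n,m}(\rho)[\beta](\tau)=\int_{\bT}\tilde K(\tau,\tau-u)\beta(u)\,\mathrm{d}u$, so the $\tau$-derivative (if valid under the integral) reads $\int_{\bT}[(\p_1+\p_2)\tilde K](\tau,\tau-u)\beta(u)\,\mathrm{d}u$.  The essential observation is that $\tilde K$ can be viewed as $F(\rho(\tau),\rho(\tau-s),s)$ for a function $F$ smooth in its arguments away from $s=0$, and the chain rule gives $(\p_\tau+\p_s)[F(\rho(\tau),\rho(\tau-s),s)]=F_a(\ldots)\rho'(\tau)+F_s(\ldots)$: the $\rho(\tau-s)$-dependence is annihilated, so only $\rho'(\tau)$ (bounded by $CM$) enters through the data.

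A short computation shows that $\partial_a Q$ and $\partial_{B} Q$ (where $B=c^2$) also vanish at $\phi=1$ — they have the same algebraic form as $Q$ but with the exponent $m$ replaced by $m+1$ — and therefore satisfy $|\partial_a Q|,|\partial_B Q|\leq C(M) s^2$, while $|\partial_\phi Q|\leq C(M)$ and $\phi'(s)=O(|s|)$.  Plugging these bounds into the expansion of $(\p_1+\p_2)\tilde K=F_a\rho'(\tau)+F_s$ (with each occurrence of $1/s$ or $1/s^2$ cancelled by the $O(s^2)$ vanishing of $Q$ and its derivatives), every term is seen to be bounded uniformly in $(\tau,s)$ by $C(M)$; the residual factor $\p_s c=-c/s$ is absorbed by the cancellation in $\partial_B Q$.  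Dominated convergence then yields $(\sfA_{n,m}(\rho)[\beta])'\in\mathrm{C}(\bT)$ with $\|(\sfA_{n,m}(\rho)[\beta])'\|_\infty\leq C(M)\|\beta\|_\infty$. The main technical point will be verifying the three vanishings $Q|_{\phi=1}=\partial_a Q|_{\phi=1}=\partial_B Q|_{\phi=1}=0$ cleanly enough so that, after dividing by $s$ or $s^2$, no genuine singularity remains; the low regularity $\rho'\in\mathrm{C}^{r-3/2}$ is never invoked beyond boundedness, so no further assumptions on $r$ beyond $r>3/2$ are required.
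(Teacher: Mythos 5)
Your decomposition and kernel bounds are essentially the same comparison the paper itself makes: the paper's proof also works with the difference kernel of $\sfB^0_{n,m}(\rho)$ and $2^{n+1}\sfH_{n,m}(\rho)$ and first proves the bound $|\tilde K(\tau,s)|\le C(M)|s|$, and your estimates $|Q|,\,|\p_A Q|,\,|\p_B Q|\le C(M)s^2$, $|\p_\phi Q|\le C(M)$, $\phi'(s)=O(|s|)$ (all valid, since the denominators are bounded below by $4/M^2$ and $|\delta_{[\tau,s]}\rho/s|\le\|\rho'\|_\infty\le C M$) do yield $|(\p_\tau+\p_s)\tilde K|\le C(M)$; the cancellation of the $\rho'(\tau-s)$-contributions under $\p_\tau+\p_s$ is exactly the mechanism used in the paper. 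You differ only in the bookkeeping for the derivative: the paper assumes first $\beta\in{\rm C}^1(\bT)$, integrates by parts in $s$, expresses $(\sfA_{n,m}(\rho)[\beta])'$ through the previously estimated operators $\sfA^1_{\cdot,\cdot}$ and $\sfB^1_{\cdot,\cdot}$ and concludes by density, whereas you estimate $(\p_\tau+\p_s)\tilde K$ pointwise, which is more self-contained and avoids the density step.

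There is, however, one genuine error in your differentiation step. The rewriting ``by periodicity, $\sfA_{n,m}(\rho)[\beta](\tau)=\int_{\bT}\tilde K(\tau,\tau-u)\beta(u)\,{\rm d}u$'' is not legitimate: $\tilde K(\tau,\cdot)$ is \emph{not} $2\pi$-periodic in $s$, because the $\sfH$-part of the kernel is built from $1/s$ and $\delta_{[\tau,s]}\rho/s$ rather than from $1/\tan(s/2)$, and it does not match at $s=\pm\pi$ (the $\sfB$-part does, since it vanishes there). After the substitution $u=\tau-s$ the integral runs over the moving interval $(\tau-\pi,\tau+\pi)$, so differentiating in $\tau$ produces, by Leibniz' rule, the boundary contribution $\big(\tilde K(\tau,-\pi)-\tilde K(\tau,\pi)\big)\beta(\tau-\pi)$, which is nonzero in general (for $n$ even it is exactly the first term, carrying the factor $1+(-1)^n$, in the paper's formula for $(\sfA_{n,m}(\rho)[\beta])'$). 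Your formula for the derivative omits this term. The omission does not endanger the conclusion — the boundary term is continuous in $\tau$ and bounded by $C(M)\|\beta\|_\infty$ — but as written the step is incorrect and must be repaired by including it; you should also justify the interchange of differentiation and integration (e.g. via the uniform bound on $(\p_\tau+\p_s)\tilde K$ and dominated convergence) and note that continuity of $\sfA_{n,m}(\rho)[\beta]$ itself, needed for mapping into ${\rm C}(\bT)$ and ${\rm C}^1(\bT)$, follows from the continuity theorem for parameter integrals.
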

\begin{proof}
 For $\ell\in\{0,\,1\}$, $\tau\in\R$, and $0\neq s\in(-\pi,\pi)$ we define
\begin{equation*}
K_{n,m}^\ell(\tau,s):= \frac{\frac{1}{\pi t_{[s]}^{1+\ell}}\Big(\frac{\delta_{[\tau,s]}\rho}{t_{[s]}}\Big)^n}
{\Big((\rho(\tau)+\rho(\tau-s))^2+\Big(\frac{\delta_{[\tau,s]}\rho}{t_{[s]}}\Big)^2\Big)^m}-
\frac{\frac{2^{1+\ell}}{\pi s^{1+\ell}}\Big(\frac{2\delta_{[\tau,s]}\rho}{s}\Big)^n}
{\Big((\rho(\tau)+\rho(\tau-s))^2+\Big(\frac{2\delta_{[\tau,s]}\rho}{s}\Big)^2\Big)^m}
\end{equation*}
and we denote by $\sfA_{n,m}^\ell(\rho)$ the integral operator with kernel $K_{n,m}^\ell$; that is
\[
\sfA_{n,m}^\ell(\rho)[\beta](\tau):=\int_{-\pi}^\pi K_{n,m}^\ell(\tau,s)\beta(\tau-s)\, {\rm d}s,\qquad \tau\in\R.
\]
We have $\sfA_{n,m}^0(\rho)=\sfA_{n,m}(\rho)$.
Using \eqref{inequalities}, it is not difficult to find a constant $C=C(M)>0$ such that
\begin{equation}\label{est:KA}
|K_{n,m}^\ell(\tau,s)|\leq C|s|^{1-\ell},\qquad 0\neq s\in(-\pi,\pi),\, \tau\in\R,\, \ell\in\{0,1\}.
\end{equation}
Moreover, if $0\neq s\in(-\pi,\pi)$, then $K_{n,m}^\ell(\cdot,s)\beta(\cdot-s):\bT\to\R$ is continuous and the
 theorem on the continuity of parameter integrals ensures that $\|\sfA_{n,m}^\ell(\rho)\|_{\kL({\rm C}(\bT))}\leq C$ for~$\ell\in\{0,1\}$.

To prove that actually $\sfA_{n,m}(\rho)\in\kL({\rm C}(\bT), {\rm C}^1(\bT))$, we now assume that $\beta\in{\rm C}^1(\bT)$.
Then, 
\begin{align*}
&K_{n,m}^0(\cdot,s)\beta(\cdot-s)\in {\rm C}^1(\bT)\qquad\text{for all $0\neq s\in(-\pi,\pi)$,}\\
&K_{n,m}^0(\tau,\cdot)\beta(\tau-\cdot)\in {\rm C}^1([-\pi,\pi])\qquad\text{for all $\tau\in\R$.}
\end{align*}
Using \eqref{est:KA}, Fubini's theorem, and integration by parts, we conclude that $\sfA_{n,m}(\rho)[\beta]$ is weakly differentiable with
\[
(\sfA_{n,m}(\rho)[\beta])'(\tau)=(K_{n,m}^0(\tau,-\pi)-K_{n,m}^0(\tau,\pi)\beta(\tau-\pi)+\int_{-\pi}^\pi\big[(\p_\tau+\p_s) K_{n,m}^0(\tau,s)\big]\beta(\tau-s)\,{\rm d}s
\]
for $\tau\in\R$, or equivalently
\begin{align*}
(\sfA_{n,m}(\rho)[\beta])'&=\beta(\cdot-\pi)\frac{2^{n+1}\frac{1+(-1)^{n}}{\pi^2}\Big(\frac{\delta_{[\cdot,\pi]}\rho}{\pi}\Big)^n}
{\prod\limits_{i=1}^m\Big[(\rho +\rho(\cdot-\pi))^2+\Big(\frac{2\delta_{[\cdot,\pi]}\rho}{\pi}\Big)^2\Big]}+n\rho'\sfA^1_{n-1,m}(\rho)[\beta]\\
&\quad-\frac{n+1}{2}\big(\sfA^1_{n,m}(\rho)+\sfB_{n,m}^1(\rho)\big)[\beta]-2m\rho'\sfA^1_{n+1,m+1}(\rho)[\beta]\\
&\quad+m\big(\sfA^1_{n+2,m+1}(\rho)+\sfB^1_{n+2,m+1(\rho)}\big)[\beta]\\
&\quad -2m\rho\rho'\sfA_{n,m+1}(\rho)[\beta]-2m \rho'\sfA_{n,m+1}(\rho)[\rho\beta].
\end{align*}
Lemma~\ref{L:B1p} together with $\|\sfA_{n,m}^\ell(\rho)\|_{\kL({\rm C}(\bT))}\leq C$, $\ell\in\{0,1\}$, yield the desired claim via a density argument.
\end{proof}

 \section{Localization of the operators $\sfB_{n,m}^p(\rho)$}\label{SEC:B}

This section presents commutator and localization results for the operators~$\sfB_{n,m}^p(\rho)$, 
which play a crucial role in the proofs of Lemma~\ref{L:loc} and Lemma~\ref{L:localpha}.
Throughout this section we fix again~${3/2<r'<r\leq 2}$ and~$M>1$. We   refer to~\eqref{pi_j}-\eqref{eqnorm} and recall the definition~\eqref{defVM} of~$\cV_{r,M}$.
To start, we establish the following commutator property.

\begin{lemma}\label{L:AB1}
Let $n,\,m,\,p\in\N$  satisfy $p\leq n+1$ and fix $a\in {\rm C}^1(\bT)$. 
Then, there exists a positive constant~$C=C(M,\|a\|_{{\rm C}^1})$ such that for all $\rho\in\cV_{r,M}$  it holds that
\begin{equation}\label{comest}
\|\llbracket a,\sfB_{n,m}^p(\rho)\rrbracket\|_{\kL(L_2(\bT), H^1(\bT))}\leq C.
\end{equation} 
\end{lemma}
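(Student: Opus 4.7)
The plan is to identify the commutator with a non-singular operator of the $B$-family and then to invoke the regularity result in Lemma~\ref{L:B3p}. Letting $K$ denote the integral kernel of $\sfB_{n,m}^p(\rho)$, the commutator has the representation
$$\llbracket a, \sfB_{n,m}^p(\rho)\rrbracket[\beta](\tau) = \PV\int_{-\pi}^\pi K(\tau,s)\bigl[a(\tau)-a(\tau-s)\bigr]\beta(\tau-s)\,{\rm d}s.$$
The key algebraic step is the factorization $a(\tau)-a(\tau-s) = t_{[s]}\cdot\dfrac{\delta_{[\tau,s]}a}{t_{[s]}}$. The factor $t_{[s]}$ cancels the $1/t_{[s]}$ appearing next to $\beta(\tau-s)$ in $K$, while the remaining quotient $\delta_{[\tau,s]}a/t_{[s]}$ plays the role of an additional copy of the type $\delta_{[\tau,s]}h_i/t_{[s]}$ in the numerator. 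Matching the resulting expression against \eqref{Bnmp} yields the clean identity
$$\llbracket a, \sfB_{n,m}^p(\rho)\rrbracket[\beta] = B_{n+1,m}^{p+1}(\rho,\ldots,\rho)[\rho,\ldots,\rho,a,\beta],$$
with $a$ occupying the $(n+1)$-th slot among the $h_i$.

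Once this identity is established the estimate is immediate. Since $1\leq p+1\leq n+2$, the operator on the right-hand side falls under the non-singular case covered by Lemma~\ref{L:B3p}, which gives
$$\bigl\|B_{n+1,m}^{p+1}(\rho,\ldots,\rho)[\rho,\ldots,\rho,a,\cdot]\bigr\|_{\kL(L_2(\bT),H^1(\bT))} \leq C\,\|a\|_{{\rm C}^1}\,\|\rho\|_{{\rm C}^1}^n,$$
with a constant $C=C(M)$. Because $r>3/2$ provides the continuous embedding $H^r(\bT)\hookrightarrow {\rm C}^1(\bT)$, the factor $\|\rho\|_{{\rm C}^1}^n$ is uniformly bounded on $\cV_{r,M}$, which yields \eqref{comest} with a constant depending only on $M$ and $\|a\|_{{\rm C}^1}$.

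I do not anticipate a genuine obstacle: the argument is the classical observation that commutation with a Lipschitz function raises the regularity of a singular integral by one order, encoded here in the index shift $(n,p)\mapsto(n+1,p+1)$ of the $B$-family. The only point requiring care is the combinatorial bookkeeping when matching kernels, but the two expressions agree term by term once one absorbs the Lipschitz increment $a(\tau)-a(\tau-s)$ into an extra entry of the numerator.
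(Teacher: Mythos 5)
Your proof is correct, and for the singular case it takes a genuinely different route from the paper. The identity
\begin{equation*}
\llbracket a,\sfB_{n,m}^p(\rho)\rrbracket[\beta]=B_{n+1,m}^{p+1}(\rho,\ldots,\rho)[\rho,\ldots,\rho,a,\beta]
\end{equation*}
checks out: the commutator inserts the factor $\delta_{[\tau,s]}a=t_{[s]}\,\frac{\delta_{[\tau,s]}a}{t_{[s]}}$ into the kernel, which raises the exponent of $t_{[s]}$ from $p$ to $p+1$ and adds one $h$-slot occupied by $a$, and the hypothesis $p\leq n+1$ is exactly what keeps $p+1\leq (n+1)+1$ inside the admissible index range of the family \eqref{Bnmp}. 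Since $p+1\geq 1$, Lemma~\ref{L:B3p} applies (its statement covers the general multilinear operators with $h\in{\rm C}^1(\bT)^{n+1}$, and $\rho\in\cV_{r,M}\subset H^r(\bT)\hookrightarrow{\rm C}^1(\bT)$ for $r>3/2$), giving the uniform $\kL(L_2(\bT),H^1(\bT))$-bound with constant depending only on $M$ and $\|a\|_{{\rm C}^1}$. The paper instead splits the cases: for $1\leq p\leq n+1$ it observes that $\sfB^p_{n,m}(\rho)$ itself maps $L_2(\bT)$ into $H^1(\bT)$ by Lemma~\ref{L:B3p}, so the commutator is trivially bounded, while for the singular case $p=0$ it refers to the commutator argument of an external reference and omits the details. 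Your index-shift identity treats all $0\leq p\leq n+1$ uniformly and, in particular, makes the $p=0$ case self-contained within the paper's own toolbox, at the modest price of the kernel bookkeeping you already carried out.
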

\begin{proof}
If $1\leq p\leq n+1$, the  claim follows directly from Lemma~\ref{L:B3p}.
The claim for  $p=0$ follows by arguing as in the proof of \cite[Lemma~12]{AM22} and therefore we omit the details. 
\end{proof}

The following localization result is  repeatedly used in the proof of Lemma~\ref{L:localpha} and Lemma~\ref{L:loc}, 
as it provides in particular a localization of the operator $\sfB_{n,m}^p(\rho) $ with $\rho\in\cV_r$ by Fourier multipliers which differ 
from the Hilbert transform $H=2\sfB_{0,1}^0(1)$ only by a multiplicative constant. 
\begin{lemma}\label{L:AB2}
		Let $n,\,m\in\N$, $a,\,b\in H^{r-1}(\bT)$, and~${\mu>0}$. 
		Then,  for each sufficiently small~${\e\in(0,1)}$, there is a constant~$K=K(\e,M)>0$ such that for all~$\rho\in\cV_{r,M}$, ${1\leq j\leq q(\e)}$, and~$\beta\in H^{r-1}(\bT)$ it holds that
		\begin{equation}\label{locbnm}
			\bigg\|\pi_j^\e a \sfB_{n,m}^0 (\rho)[b\beta]
			-\frac{2^{n-1}ab\rho'^n}{(2\omega_\rho)^{2m}}(\tau_j^\e) H[\pi_j^\e \beta]\bigg\|_{H^{r-1}}
			\leq \mu \|\pi_j^\e \beta\|_{H^{r-1}}+K\|\beta\|_{H^{r'-1}}.
		\end{equation}
	\end{lemma}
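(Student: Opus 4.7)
The plan is to peel off the leading order of $\sfB_{n,m}^0(\rho)$ near $\tau_j^\e$, which is a scalar multiple of the Hilbert transform $H$, and to show that every error along the way is absorbed either into the small-parameter term $\mu\|\pi_j^\e\beta\|_{H^{r-1}}$ (arising from the H\"older oscillation of $\rho$, $\rho'$, $a$, $b$ on $\supp(\chi_j^\e)$) or into the lower-order term $K\|\beta\|_{H^{r'-1}}$ (arising from commutators and smoothing contributions).

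The first step is to apply Lemma~\ref{L:AB1} to exchange $\pi_j^\e$ with $\sfB_{n,m}^0(\rho)$ via
\[
\pi_j^\e a\,\sfB_{n,m}^0(\rho)[b\beta] = a\,\sfB_{n,m}^0(\rho)[\pi_j^\e b\beta] + a\,\llbracket\pi_j^\e,\sfB_{n,m}^0(\rho)\rrbracket[b\beta].
\]
The commutator on the right belongs to $H^1\hookrightarrow H^{r-1}$ with operator norm $\leq K(\e,M)$; multiplication by $a\in H^{r-1}$ via the algebra estimate~\eqref{algebra} together with $H^{r'-1}\hookrightarrow L_\infty$ turns this into a $K\|\beta\|_{H^{r'-1}}$ contribution. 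Next I would freeze $b$ at $\tau_j^\e$ by writing $\pi_j^\e b\beta = b(\tau_j^\e)\pi_j^\e\beta + \chi_j^\e(b-b(\tau_j^\e))\pi_j^\e\beta$. Since $b\in H^{r-1}\hookrightarrow{\rm C}^{r-3/2}$, the cutoff multiplier $\chi_j^\e(b-b(\tau_j^\e))$ has $L_\infty$-norm $\leq C\|b\|_{H^{r-1}}\e^{r-3/2}$; combined with $\sfB_{n,m}^0(\rho)\in\kL(H^{r-1})$ (Lemma~\ref{L:B3}) and~\eqref{algebra}, this yields an error of type $C\e^{r-3/2}\|\pi_j^\e\beta\|_{H^{r-1}} + K(\e)\|\beta\|_{H^{r'-1}}$. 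The same manipulation freezes the outer factor $a$ at $\tau_j^\e$.

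The core step is the kernel freezing. Using the identity~\eqref{identbah}, $\sfB_{n,m}^0(\rho) = 2^{n+1}\sfH_{n,m}(\rho) + \sfA_{n,m}(\rho)$, the remainder $\sfA_{n,m}(\rho)\in\kL({\rm C}(\bT),{\rm C}^1(\bT))$ is smoothing by Lemma~\ref{L:C2}, hence enters $K\|\beta\|_{H^{r'-1}}$. The kernel of $\sfH_{n,m}(\rho)$ depends on $\rho$ through $\rho(\tau)+\rho(\tau-s)$ and $\delta_{[\tau,s]}\rho/s$, which on $\supp(\chi_j^\e)$ differ from their $\tau_j^\e$-values by $O(\e^{r-1/2})$ and $O(\e^{r-3/2})$ respectively (using $H^r\hookrightarrow {\rm C}^{r-1/2}$ and $H^{r-1}\hookrightarrow {\rm C}^{r-3/2}$). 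Expanding the kernel around the frozen values and invoking variants of Lemma~\ref{L:B4} and Lemma~\ref{L:C1}(ii) for the remainder kernel bounds the difference by $C\e^{r-3/2}\|\pi_j^\e\beta\|_{H^{r-1}} + K(\e)\|\beta\|_{H^{r'-1}}$. The frozen operator equals a constant times $\frac{1}{\pi}\PV\int\beta(\tau-s)/s\,{\rm d}s$, which differs from $2H[\beta]$ only by a bounded smoothing operator; a final commutator $\llbracket\pi_j^\e,H\rrbracket$ absorption places the main term into the stated form $H[\pi_j^\e\beta]$ with the coefficient indicated in~\eqref{locbnm}. Choosing $\e$ small enough that $C\e^{r-3/2}<\mu$ completes the proof.

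The principal obstacle is pinning every error against the localized norm $\|\pi_j^\e\beta\|_{H^{r-1}}$ rather than the full $\|\beta\|_{H^{r-1}}$; only the localized form combines correctly via~\eqref{eqnorm} when the estimate is summed over $j$ in the application to Lemma~\ref{L:loc}. This is achieved by systematically multiplying each H\"older-small factor (such as $b-b(\tau_j^\e)$, $a-a(\tau_j^\e)$, or the kernel oscillation in $\rho$ and $\rho'$) by the cutoff $\chi_j^\e$ and consistently pairing it with $\pi_j^\e\beta$, so that the algebra estimate~\eqref{algebra} transfers the $L_\infty$-smallness of the factor onto the localized Sobolev norm of $\pi_j^\e\beta$ rather than onto the full norm of $\beta$.
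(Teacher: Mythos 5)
Your first two reduction steps (commuting $\pi_j^\e$ past $\sfB_{n,m}^0(\rho)$ via Lemma~\ref{L:AB1}, and freezing the coefficients $a$, $b$ at $\tau_j^\e$ using $\chi_j^\e$, the H\"older smallness from $H^{r-1}\hookrightarrow{\rm C}^{r-3/2}$, the algebra estimate~\eqref{algebra} and Lemma~\ref{L:B3}) are essentially the paper's treatment of its terms $E_1$, $E_2$, $E_3$, and that part is fine. The gap is in what you call the core step, the kernel freezing, which is exactly where the real work of this lemma sits. You assert that on $\supp\chi_j^\e$ the kernel entries $\rho(\tau)+\rho(\tau-s)$ and $\delta_{[\tau,s]}\rho/s$ differ from their values frozen at $\tau_j^\e$ by $O(\e^{r-1/2})$ and $O(\e^{r-3/2})$. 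This is false as stated: in the integral $s$ ranges over all of $(-\pi,\pi)$, so $\tau-s$ leaves any small neighborhood of $\tau_j^\e$ and the difference quotient $\delta_{[\tau,s]}\rho/s$ is in no sense close to $\rho'(\tau_j^\e)$ for $|s|$ bounded away from $0$; the closeness holds only in the near-diagonal region. The reason the statement is nevertheless true is that the density is $\pi_j^\e\beta$, supported in $I_j^\e$, and the far-field contribution must be shown to be lower order by a support analysis. The paper does this by introducing the auxiliary globally Lipschitz function $F_j$ (equal to $\rho$ on $J_j^\e$, with slope $\rho'(\tau_j^\e)$ outside), verifying the replacement identity~\eqref{eq:vcv}, and only then exploiting that $F_j-\rho'(\tau_j^\e)\,{\rm id}_\R$ has Lipschitz constant $O(\e^{r-3/2})$; none of this appears in your argument.

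Moreover, the estimate you need is not an $L_2$-bound but an $H^{r-1}$-bound with the small constant multiplying $\|\pi_j^\e\beta\|_{H^{r-1}}$ (and, for $r=2$, a bound on $\|(\chi_j^\e E)'\|_2$). Lemma~\ref{L:B4} and Lemma~\ref{L:C1}~(ii), which you invoke as ``variants'', only give $L_2$-bounds in terms of $\|\beta\|_{H^{r-1}}$ and cannot deliver the $H^{r-1}$-seminorm control with a factor $\e^{r-3/2}$ in front of the localized norm. In the paper this is the content of the separate Lemma~\ref{L:AB3}: one estimates $[\sfT_\xi E-E]$ by splitting into pieces where only one piece carries $\ov{\pi_j^\e\beta}-\pi_j^\e\beta$, uses the identity~\eqref{eq:vcv} and the function $F_j$ to extract the factor $\e^{r-3/2}$ in~\eqref{fcb3}, and integrates against $|\xi|^{-1-2(r-1)}$ to reconstruct the seminorm $[\pi_j^\e\beta]_{H^{r-1}}$, with all remaining pieces going into $K\|\beta\|_{H^{r'-1}}$ via the translation-continuity of $\rho\mapsto\sfH_{n,m}(\rho)$; the case $r=2$ needs the additional differentiation formula~\eqref{eq:C3} and the smoothing of $\sfA_{n,m}$ from Lemma~\ref{L:C2}. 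Without an argument of this type (or an equivalent substitute), your proposal does not prove the inequality~\eqref{locbnm}; it only reduces it to the statement of Lemma~\ref{L:AB3}, which is left unproved.
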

	\begin{proof}
We compute
		\begin{equation*}
			\pi_j^\e a \sfB_{n,m}^0 (\rho)[b\beta]-\frac{2^{n-1}ab\rho'^n}{(2\omega_\rho)^{2m}}(\tau_j^\e)  H[\pi_j^\e \beta]
			=a(E_1+E_2)+b(\tau_j^\e)\big(E_3+ a(\tau_j^\e)E_4\big),
		\end{equation*}
where
		\begin{equation*}
		\begin{aligned}
			&E_1 := \llbracket\pi_j^\e, \sfB_{n,m}^0 (\rho)\rrbracket[(b-b(\tau_j^\e))\beta],
			& & E_2 := \sfB_{n,m}^0(\rho)[\pi_j^\e(b-b(\tau_j^\e))\beta],\\
			&E_3 := \pi_j^\e a \sfB_{n,m}^0(\rho)[\beta]-a(\tau_j^\e)\sfB_{n,m}^0 (\rho)[\pi_j^\e \beta],
			& &E_4 := \sfB_{n,m}^0(\rho)[\pi_j^\e\beta]-\frac{(2\rho')^n}{(2\omega_\rho)^{2m}}(\tau_j^\e) \sfB_{0,1}^0(1)[\pi_j^\e \beta].
		\end{aligned}
		\end{equation*}
		Using Lemma~\ref{L:AB1}, we have for $1\leq j\leq q(\e)$ and $\beta\in H^{r-1}(\bT)$
		\begin{equation}\label{TES1}
		\|aE_1\|_{H^{r-1}}\leq C\|E_1\|_{H^1}\leq K\|(b-b(\tau_j^\e))\beta\|_2\leq K\|\beta\|_{H^{r'-1}}.
		\end{equation}
		Moreover, since $\chi_j^\e\pi_j^\e=\pi_j^\e$ by \eqref{chi_j},  Lemma~\ref{L:B3} together with \eqref{algebra} lead to
		\begin{equation}\label{TES2}
		\begin{aligned}
			\|aE_2\|_{H^{r-1}}&\leq C\|\chi_j^\e(b-b(\tau_j^\e))\pi_j^\e\beta\|_{H^{r-1}}\leq C\|\chi_j^\e(b-b(\tau_j^\e))\|_\infty\|\pi_j^\e\beta\|_{H^{r-1}}+K\|\pi_j^\e\beta\|_\infty\\
		&\leq (\mu/3)\|\pi_j^\e\beta\|_{H^{r-1}}+K\|\beta\|_{H^{r'-1}}
			\end{aligned}
		\end{equation}	
		if $\e\in(0,1)$ is sufficiently small, due to the fact that $b\in {\rm C}^{r-3/2}(\bT)$ and \eqref{chi_j}.
		
Using \eqref{chi_j}, the term $E_3$ can be decomposed as  
\[
E_3= \chi_j^\e(a-a(\tau_j^\e)) \sfB_{n,m}^0(\rho)[\pi_j^\e\beta] - a\chi_j^\e\llbracket\pi_j^\e, \sfB_{n,m}^0 (\rho)\rrbracket[\beta] + a(\tau_j^\e)\llbracket\chi_j^\e, \sfB_{n,m}^0 (\rho)\rrbracket[\pi_j^\e\beta].
\]  
 We may then proceed as in \eqref{TES1}-\eqref{TES2} to deduce from Lemma~\ref{L:B3}, Lemma~\ref{L:AB1},~\eqref{algebra},  and the fact~\(a\in {\rm C}^{r-3/2}(\bT)\), for sufficiently small \(\e\in(0,1)\), that  
\begin{equation}\label{TES3}
\begin{aligned}
\|b(\tau_j^\e)E_3\|_{H^{r-1}} &\leq C\|\chi_j^\e(a-a(\tau_j^\e))\|_\infty \|\sfB_{n,m}^0(\rho)[\pi_j^\e\beta]\|_{H^{r-1}} + K\|\beta\|_{H^{r'-1}} \\
&\leq (\mu/3)\|\pi_j^\e\beta\|_{H^{r-1}} + K\|\beta\|_{H^{r'-1}}.
\end{aligned}
\end{equation}  

Concerning $E_4$, we use again  \eqref{chi_j} to write
\[
E_4=\chi_j^\e E_4-\llbracket\chi_j^\e, \sfB_{n,m}^0 (\rho)\rrbracket[\pi_j^\e\beta] +
\frac{(2\rho')^n}{(2\omega_\rho)^{2m}}(\tau_j^\e) \llbracket\chi_j^\e, \sfB_{0,1}^0(1)\rrbracket[\pi_j^\e\beta],
\]
and, using Lemma~\ref{L:AB1}, we find a constant $C_1>0$ such that for all $\e\in(0,1)$, $1\leq j\leq q(\e)$, and~${\beta\in H^{r-1}(\bT)}$ we have
\begin{equation}\label{TES4a}
\|(ab)(\tau_j^\e)E_4\|_{H^{r-1}}\leq C_1\|\chi_j^\e E_4\|_{H^{r-1}}+K\|\beta\|_{H^{r'-1}}.
\end{equation}
To estimate the term $\|\chi_j^\e E_4\|_{H^{r-1}}$,  we infer from  Lemma~\ref{L:B1} that 
\begin{equation}\label{TES4b1}
\|\chi_j^\e E_4\|_2\leq K\|\beta\|_2\leq K\|\beta\|_{H^{r'-1}}.
\end{equation}
It remains to estimate the term $[\chi_j^\e E_4]_{H^{r-1}}$ if $r\in(3/2,2)$, respectively the norm~$\|(\chi_j^\e E_4)'\|_2$ if $r=2$.

Let first $r\in(3/2,2)$.
Then, since $\rho\in\cV_{r,M}$,   elementary algebraic manipulations imply that the seminorm $[\chi_j^\e E_4]_{H^{r-1}}$ can be estimated, 
up to a multiplicative constant $C>0$, by a finite sum of terms of the form
\begin{equation*}
F_1:=\|\chi_j^\e(\rho-\rho(\tau_j^\e))\sfB_{0,\wt m }^0(\rho)[\rho^k\pi_j^\e\beta]\|_{H^{r-1}}+\|\chi_j^\e\sfB_{0,\wt m }^0(\rho)[(\rho-\rho(\tau_j^\e))\rho^k\pi_j^\e\beta]\|_{H^{r-1}}
\end{equation*}
and 
\begin{equation*}
F_2:=\big[\chi_j^\e\big(\sfB_{\wt n+1,\wt m}^0(\rho)-2\rho'(\tau_j^\e)\sfB_{\wt n,\wt m}^0(\rho)\big)[\pi_j^\e\beta]\big]_{H^{r-1}}
\end{equation*}
with $k\in\{0,1\}$ and $\wt n,\,\wt m\in\N$. 
Thus, recalling \eqref{TES4a} and \eqref{TES4b1}, in order to show that
\begin{equation}\label{TES4}
\|(ab)(\tau_j^\e)E_4\|_{H^{r-1}}\leq (\mu/3)\|\chi_j^\e E_4\|_{H^{r-1}}+K\|\beta\|_{H^{r'-1}}
\end{equation}
for each  sufficiently small $\e\in(0,1)$, uniformly in $\rho\in\cV_{r,M}$, $1\leq j\leq q(\e)$, and~${\beta\in H^{r-1}(\bT)}$, 
we need to prove that the terms  $F_1$ and $F_2$ 
can be estimated, for any given arbitrary $\theta>0$, by~$\theta\|\pi_j^\e\beta\|_{H^{r-1}}+K\|\beta\|_{H^{r'-1}}$, uniformly in $\rho\in\cV_{r,M}$, $1\leq j\leq q(\e)$, and~$\beta\in H^{r-1}(\bT)$, 
provided that~${\e\in(0,1)}$ is  sufficiently small.

Since $F_1$ can be estimated similarly as the terms $E_2$ and $E_3$ above and the estimate for~$F_2$ is provided in Lemma~\ref{L:AB3} below, 
the desired claim~\eqref{locbnm} with $r\in(3/2,2)$ follows from~\eqref{TES1}-\eqref{TES3} and~\eqref{TES4}.

Let now $r=2$. Using the identity \eqref{identbah}, Lemma~\ref{L:C1} (with $r=r'$  therein) and   Lemma~\ref{L:C2}  yield
\begin{align*}
 \|(\chi_j^\e E_4)'\|_2&\leq C\Big\{\|\chi_j^\e\sfA_{n,m}(\rho)[\pi_j^\e\beta]\|_{{\rm C}^1}+\|\chi_j^\e\sfA_{0,1}(1)[\pi_j^\e\beta]\|_{{\rm C}^1}\\
&\hspace{1cm}+\Big\|\Big[\chi_j^\e\Big(\sfH_{n,m}(\rho)-\frac{\rho'^n}{(2\omega_\rho)^{2m}}(\tau_j^\e) \sfH_{0,1}(1)\Big)[\pi_j^\e\beta]\Big]'\Big\|_{2}\Big\}\\
&\leq C\Big\| \chi_j^\e\Big(\sfH_{n,m}(\rho)-\frac{\rho'^n}{(2\omega_\rho)^{2m}}(\tau_j^\e) \sfH_{0,1}(1)\Big)[(\pi_j^\e\beta)']\Big\|_{2}+K\|\beta\|_{H^{r'-1}}, 
\end{align*}
and the first term on the right-hand side can be estimated, after some algebraic manipulations, up to a  multiplicative constant $C>0$,  by  a finite sum of terms of the form
\begin{align*} 
\wt F_1&:=\|\chi_j^\e  (\rho-\rho(\tau_j^\e))\sfH_{0,\wt m}(\rho)[\rho^{k}(\pi_j^\e\beta)']\|_2+\|\chi_j^\e \sfH_{0,\wt m}(\rho)[(\rho-\rho(\tau_j^\e))\rho^{k}(\pi_j^\e\beta)']\|_2\big),\\
 \wt F_2&:=\|\chi_j^\e \big(\sfH_{ \wt n+1, \wt m}(\rho)-\rho'(\tau_j^\e)\sfH_{\wt n,\wt m}(\rho)\big)[(\pi_j^\e\beta)']\|_2,
\end{align*}
with  $k\in\{0,1\}$ and $\wt n,\,\wt m\in\N$. Hence, in order to establish~\eqref{TES4} in the case $r=2$, it remains to prove that the terms  $\wt F_1$ and $\wt F_2$ 
can be estimated, for any given arbitrary~$\theta>0$, by~$\theta\|\pi_j^\e\beta\|_{H^{1}}+K\|\beta\|_{H^{r'-1}}$, uniformly in $\rho\in\cV_{r,M}$, $1\leq j\leq q(\e)$, and~$\beta\in H^{1}(\bT)$, 
for each sufficiently small~${\e\in(0,1)}$. Since the estimate for $\wt F_1$ is an immediate consequence of Lemma~\ref{L:C1}~(i)   and the estimate for~$\wt F_2$ is established in Lemma~\ref{L:AB3} below, 
the  claim~\eqref{locbnm} with $r=2$ follows from~\eqref{TES1}-\eqref{TES3} and~\eqref{TES4}.
 		\end{proof}
		
		We conclude this section by establishing  some estimates used in the proof of Lemma~\ref{L:AB2}.
		\begin{lemma}\label{L:AB3}
		Let $n,\,m\in\N $ and~${\theta>0}$ be fixed. 
\begin{itemize}
\item[(i)] Let $r\in(3/2,2)$. Then, for each sufficiently small~${\e\in(0,1)}$, there exists a positive constant~$K=K(\e,M)$
		 such that for all~$\rho\in\cV_{r,M}$,  ${\beta\in H^{r-1}(\bT)}$, and~$1\leq j\leq q(\e)$ it holds that
		\begin{equation}\label{lochold}
		\qquad \big[\chi_j^\e\big(\sfB_{ n+1, m}^0(\rho)-2\rho'(\tau_j^\e)\sfB_{ n, m}^0(\rho)\big)[\pi_j^\e\beta]\big]_{H^{r-1}}
			\leq \theta \|\pi_j^\e \beta\|_{H^{r-1}}+K\|\beta\|_{H^{r'-1}}.
		\end{equation}
\item[(ii)] Let $r=2$. Then, for each  sufficiently small~${\e\in(0,1)}$, there is  a  constant~${K=K(\e,M)>0}$
		 such that for all~$\rho\in\cV_{r,M}$,  ${\beta\in H^{1}(\bT)}$, and~$1\leq j\leq q(\e)$ it holds that
		\begin{equation}\label{locholdr2}
		 \|\chi_j^\e \big(\sfH_{ n+1, m}(\rho)-\rho'(\tau_j^\e)\sfH_{n,m}(\rho)\big)[(\pi_j^\e\beta)']\|_2
			\leq \theta \|\pi_j^\e \beta\|_{H^{1}}.
		\end{equation}
 \end{itemize}		  
	\end{lemma}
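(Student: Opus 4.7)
My plan rests on the key observation that the factor $\delta_{[\tau,s]}\rho/t_{[s]}-2\rho'(\tau_j^\e)$ effectively appearing in the kernel of the difference $\sfB_{n+1,m}^0(\rho)-2\rho'(\tau_j^\e)\sfB_{n,m}^0(\rho)$ is small on the relevant support. To exploit this, introduce the Taylor remainder
\[
\rho_\sharp(\tau):=\rho(\tau)-\rho(\tau_j^\e)-\rho'(\tau_j^\e)(\tau-\tau_j^\e),
\]
so that $\rho_\sharp(\tau_j^\e)=\rho_\sharp'(\tau_j^\e)=0$ and $\delta_{[\tau,s]}\rho_\sharp=\delta_{[\tau,s]}\rho-\rho'(\tau_j^\e)s$. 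The identity
\[
\frac{\delta_{[\tau,s]}\rho}{t_{[s]}}-2\rho'(\tau_j^\e)=\frac{\delta_{[\tau,s]}\rho_\sharp}{t_{[s]}}+\rho'(\tau_j^\e)\Big(\frac{s}{t_{[s]}}-2\Big)
\]
yields the decomposition
\[
\sfB_{n+1,m}^0(\rho)-2\rho'(\tau_j^\e)\sfB_{n,m}^0(\rho) = B_{n+1,m}^0(\rho,\ldots,\rho)[\rho_\sharp,\rho,\ldots,\rho,\,\cdot\,]+\rho'(\tau_j^\e)\,R(\rho),
\]
where the kernel of $R(\rho)$ contains the bounded factor $(s/t_{[s]}-2)/t_{[s]}$, which is $O(|s|)$ on $(-\pi,\pi)$. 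Consequently $R(\rho)\in\kL(L_1(\bT),{\rm C}^1(\bT))$ uniformly for $\rho\in\cV_{r,M}$, so its contribution to the left-hand side of \eqref{lochold} is absorbed into the $K\|\beta\|_{H^{r'-1}}$ term via the embedding $H^{r'-1}(\bT)\hookrightarrow L_1(\bT)$ and \eqref{algebra}.

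For the main term $F_1:=\chi_j^\e B_{n+1,m}^0(\rho,\ldots,\rho)[\rho_\sharp,\rho,\ldots,\rho,\pi_j^\e\beta]$, I localize $\rho_\sharp$: choose a smooth cutoff $\tilde\chi_j^\e$ with $\supp\tilde\chi_j^\e\subset[\tau_j^\e-3\e,\tau_j^\e+3\e]$ and $\tilde\chi_j^\e\equiv 1$ on $\supp\chi_j^\e$. Since the product $\chi_j^\e(\tau)\pi_j^\e(\tau-s)$ forces $|s|\leq 3\e$ and $\tau,\tau-s\in[\tau_j^\e-2\e,\tau_j^\e+2\e]$, replacing $\rho_\sharp$ by $\tilde\chi_j^\e\rho_\sharp$ does not alter $F_1$. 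The double vanishing of $\rho_\sharp$ at $\tau_j^\e$, together with the embedding $H^r(\bT)\hookrightarrow{\rm C}^{r-1/2}(\bT)$ and the H\"older continuity of $\rho'\in{\rm C}^{r-3/2}(\bT)$, yields the pointwise bounds $|\rho_\sharp(\tau)|\leq CM|\tau-\tau_j^\e|^{r-1/2}$ and $|\rho_\sharp'(\tau)|\leq CM|\tau-\tau_j^\e|^{r-3/2}$, hence
\[
\|\tilde\chi_j^\e\rho_\sharp\|_{H^1}\leq CM\e^{r-1}.
\]
Applying Lemma~\ref{L:B4} with $\tilde\chi_j^\e\rho_\sharp$ in its first slot then gives
\[
\|F_1\|_{L_2}\leq CM^{n+1}\e^{r-1}\|\pi_j^\e\beta\|_{H^{r-1}}.
\]

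The main obstacle is upgrading this $L_2$-smallness to smallness of the $H^{r-1}$-seminorm, since $\|F_1\|_{H^{r-1}}$ itself need not be $o(1)$ in $\e$. My plan is to split the defining integral $[F_1]_{H^{r-1}}^2=\int\|\sfT_sF_1-F_1\|_{L_2}^2|s|^{-1-2(r-1)}\,{\rm d}s$ at a threshold $|s|=A\e$ with $A>0$ to be chosen large. On the tail $\{|s|>A\e\}$, the trivial bound $\|\sfT_sF_1-F_1\|_{L_2}\leq 2\|F_1\|_{L_2}$ combined with the $L_2$-estimate above and $\int_{|s|>A\e}|s|^{-1-2(r-1)}\,{\rm d}s\leq C(A\e)^{-2(r-1)}$ produces a contribution of order $CA^{-2(r-1)}M^{2(n+1)}\|\pi_j^\e\beta\|_{H^{r-1}}^2$, which can be made smaller than $\theta^2\|\pi_j^\e\beta\|_{H^{r-1}}^2/2$ by choosing $A$ sufficiently large (depending on $M$ and $\theta$). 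The near-diagonal contribution from $\{|s|\leq A\e\}$ is handled by exploiting the narrow support of $F_1$ together with Lemma~\ref{L:B3} and the commutator estimate Lemma~\ref{L:AB1}, producing a term that, after balancing with the $L_2$-smallness, is absorbable into $K\|\beta\|_{H^{r'-1}}$. Part (ii) then follows from the same decomposition strategy applied to the operators $\sfH_{n,m}(\rho)$: the analogue identity $\delta_{[\tau,s]}\rho/s-\rho'(\tau_j^\e)=\delta_{[\tau,s]}\rho_\sharp/s$ needs no correction term, and Lemma~\ref{L:C1}(i) (applied with $\tilde\chi_j^\e\rho_\sharp$ as the first argument and recalling $\|(\tilde\chi_j^\e\rho_\sharp)'\|_\infty\leq CM\e^{r-3/2}=CM\e^{1/2}$ for $r=2$) yields the estimate
\[
\|\chi_j^\e H_{n+1,m}(\rho,\ldots,\rho)[\tilde\chi_j^\e\rho_\sharp,\rho,\ldots,\rho,(\pi_j^\e\beta)']\|_{L_2}\leq CM^{n+1}\e^{1/2}\|\pi_j^\e\beta\|_{H^1},
\]
which is smaller than $\theta\|\pi_j^\e\beta\|_{H^1}$ for $\e$ small enough, without requiring any two-scale split.
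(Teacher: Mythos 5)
Your part (ii) is essentially correct and is in fact the paper's own argument in disguise: replacing $\rho-\rho'(\tau_j^\e)\,\mathrm{id}$ by the compactly supported Taylor remainder $\tilde\chi_j^\e\rho_\sharp$, whose Lipschitz norm is $O(\e^{1/2})$, plays exactly the role of the paper's Lipschitz function $F_j$, and Lemma~\ref{L:C1}~(i) then gives \eqref{locholdr2}. In part (i) the reduction is also sound: the identity isolating $\delta_{[\tau,s]}\rho_\sharp/t_{[s]}$, the $O(|s|)$ remainder $R(\rho)$ (which is the same regularizing mechanism as the paper's $\sfA_{n,m}$ in Lemma~\ref{L:C2}, though your $\kL(L_1,{\rm C}^1)$ claim is asserted rather than proved), the bound $\|\tilde\chi_j^\e\rho_\sharp\|_{H^1}\leq CM\e^{r-1}$, the resulting $L_2$-smallness of $F_1$ via Lemma~\ref{L:B4}, and the tail $\{|s|>A\e\}$ of the seminorm integral are all fine.

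The genuine gap is the near-diagonal region $\{|s|\leq A\e\}$, which is the actual crux of \eqref{lochold}, and the strategy you sketch there does not work. First, the weight $|s|^{-1-2(r-1)}$ is not integrable at $s=0$, so the $L_2$-smallness of $F_1$ is of no use on this region: you need genuine decay of $\|\sfT_sF_1-F_1\|_2$ in $s$, and no interpolation ("balancing") between the small $L_2$ bound and a bounded $H^{r-1}$ bound can produce it, because the exponents are exactly critical ($F_1\in H^{r-1}$ gives only square-integrability of $\|\sfT_sF_1-F_1\|_2/|s|^{r-1/2}$, not a pointwise modulus). Second, the tools you invoke cannot supply the missing smallness: Lemma~\ref{L:B3} bounds $[F_1]_{H^{r-1}}$ by $C\|\tilde\chi_j^\e\rho_\sharp\|_{H^r}M^{n}\|\pi_j^\e\beta\|_{H^{r-1}}$, and $\|\tilde\chi_j^\e\rho_\sharp\|_{H^r}$ is \emph{not} $o(1)$ in $\e$ (the $H^{r-1}$-seminorm of $\tilde\chi_j^\e(\rho'-\rho'(\tau_j^\e))$ can be of order one); a term of the form $C\|\pi_j^\e\beta\|_{H^{r-1}}$ with a non-small constant can neither be made $\leq\theta\|\pi_j^\e\beta\|_{H^{r-1}}$ nor be absorbed into $K\|\beta\|_{H^{r'-1}}$, since it carries the top norm of $\beta$. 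What is actually needed -- and what the paper does, after passing to the $\sfH$-operators via Lemma~\ref{L:C2} -- is a term-by-term analysis of $\sfT_\xi F_1-F_1$ in the spirit of the proof of Lemma~\ref{L:B3}: the single term in which the translation falls on the density slot (producing $\sfT_\xi(\pi_j^\e\beta)-\pi_j^\e\beta$, hence $[\pi_j^\e\beta]_{H^{r-1}}$ after integration) must retain the uniformly small factor coming from $\tilde\chi_j^\e\rho_\sharp$ (resp.\ the paper's $F_j-\rho'(\tau_j^\e)\mathrm{id}$, with Lipschitz norm $\e^{r-3/2}$), while all remaining terms, where the translation falls on $\rho$, on $\tilde\chi_j^\e\rho_\sharp$, or on the cutoffs, must be estimated with the appendix lemmas applied with $r'$ in place of $r$, so that they only cost $K\|\beta\|_{H^{r'-1}}$. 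Your proposal omits this decomposition, and without it the estimate \eqref{lochold} is not established; with it, incidentally, the two-scale split at $|s|=A\e$ becomes unnecessary.
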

	\begin{proof}
	To prove (i), we infer from Lemma~\ref{L:C2} and the identity \eqref{identbah}  that
\begin{equation}\label{fcba}
	\begin{aligned}
		&\big[\chi_j^\e\big(\sfB_{ n+1, m}^0(\rho)-2\rho'(\tau_j^\e)\sfB_{ n, m}^0(\rho)\big)[\pi_j^\e\beta]\big]_{H^{r-1}}\\
		&\leq C\big(\|\chi_j^\e\sfA_{ n+1, m}(\rho)[\pi_j^\e\beta]\big\|_{{\rm C}^1}+\|\chi_j^\e\sfA_{ n, m}(\rho)\big)[\pi_j^\e\beta]\|_{{\rm C}^1}\big)\\
		&\quad+C\big[\chi_j^\e\big(\sfH_{ n+1, m}(\rho)-\rho'(\tau_j^\e)\sfH_{ n, m}(\rho)\big)[\pi_j^\e\beta]\big]_{H^{r-1}}\\
			&\leq C\big[\chi_j^\e\big(\sfH_{ n+1, m}(\rho)-\rho'(\tau_j^\e)\sfH_{ n, m}(\rho)\big)[\pi_j^\e\beta]\big]_{H^{r-1}}+K\|\beta\|_{H^{r'-1}},
		\end{aligned}
		\end{equation}
		and it remains to estimate the $H^{r-1}$-seminorm  of  $E:=\chi_j^\e\big(\sfH_{ n+1, m}(\rho)-\rho'(\tau_j^\e)\sfH_{ n, m}(\rho)\big)[\pi_j^\e\beta]$ by the right-hand side of \eqref{lochold}.
		To this end we compute for $\xi\in\R$, that
	\[
	\sfT_\xi E-E:=E_1+E_2+E_3-E_4,
\]
	where, with $\ov u:=\sfT _\xi u$ for  $u:\R\to\R$, we set
	\begin{align*}
	E_1&:=(\ov{\chi_j^\e}-\chi_j^\e)\big(\sfH_{n+1,m}(\ov\rho)-\rho'(\tau_j^\e)\sfH_{n,m}(\ov\rho)\big)[\ov{\pi_j^\e\beta}],\\
	E_2&:=\chi_j^\e H_{n+1,m}(\rho,\ldots,\rho)[\rho,\ldots,\rho,\rho-\rho'(\tau_j^\e){\rm id}_\R, \ov{\pi_j^\e\beta}-\pi_j^\e\beta],\\
	E_3&:=\chi_j^\e H_{n+1,m}(\ov \rho,\ldots,\ov\rho)[\ov\rho,\ldots,\ov\rho,\ov \rho-\rho, \ov{\pi_j^\e\beta}]\\
&\quad +\chi_j^\e\sum_{i=0}^{n-1}H_{n+1,m}(\rho,\ldots,\rho)[\underset{i}{\underbrace{\rho,\ldots,\rho}}, \ov\rho,\ldots,\ov\rho,\ov \rho-\rho,\rho-\rho'(\tau_j^\e){\rm id}_\R,\ov{\pi_j^\e\beta}]\\
&\quad-4\chi_j^\e\sum_{i=0}^{m-1}H_{n+3,m+1}(\underset{i+1}{\underbrace{\rho,\ldots,\rho}},\ov\rho,\ldots,\ov \rho)[ \ov\rho,\ldots,\ov\rho,\rho-\rho'(\tau_j^\e){\rm id}_\R,\ov\rho+\rho,\ov\rho-\rho, \ov{\pi_j^\e\beta}],\\
	E_4&:=\chi_j^\e\sum_{i=0}^{m-1}
	(\ov\rho^2-\rho^2)H_{n+1,m+1}(\underset{i+1}{\underbrace{\rho,\ldots,\rho}},\ov\rho,\ldots,\ov \rho)[ \ov\rho,\ldots,\ov\rho,\rho-\rho'(\tau_j^\e){\rm id}_\R,\ov{\pi_j^\e\beta}]\\
	&\hspace{1.65cm}+(\ov\rho+\rho)H_{n+1,m+1}(\underset{i+1}{\underbrace{\rho,\ldots,\rho}},\ov\rho,\ldots,\ov \rho)[ \ov\rho,\ldots,\ov\rho,\rho-\rho'(\tau_j^\e){\rm id}_\R,(\ov\rho-\rho)\ov{\pi_j^\e\beta}]\\
	&\hspace{1.65cm}+(\ov\rho-\rho)H_{n+1,m+1}(\underset{i+1}{\underbrace{\rho,\ldots,\rho}},\ov\rho,\ldots,\ov \rho)[ \ov\rho,\ldots,\ov\rho,\rho-\rho'(\tau_j^\e){\rm id}_\R,(\ov\rho+\rho)\ov{\pi_j^\e\beta}]\\
		&\hspace{1.65cm}+H_{n+1,m+1}(\underset{i+1}{\underbrace{\rho,\ldots,\rho}},\ov\rho,\ldots,\ov \rho)[ \ov\rho,\ldots,\ov\rho,\rho-\rho'(\tau_j^\e){\rm id}_\R,(\ov\rho^2-\rho^2)\ov{\pi_j^\e\beta}].
	\end{align*}
	Recalling Lemma~\ref{L:C1}, we have
	\begin{equation}\label{fcb1}
	\|E_1\|_2+\|E_4\|_2\leq C(\|\ov\rho-\rho\|_{H^1}+\|\ov\chi_j^\e-\chi_j^\e\|_{H^1})\|\beta\|_2.
	\end{equation}
	Concerning $E_2$, for $|\xi|\geq\e$ we infer from Lemma~\ref{L:C1}~(i) that
	\begin{equation}\label{fcb2}
	\|E_2\|_2\leq C \|\beta\|_2.
	\end{equation}
 For $|\xi|<\e$ we have, recalling \eqref{pi_j}-\eqref{chi_j} and that $E_2$ is  $2\pi$-periodic,
	\[
	\|E_2\|_{2}=\|E_2\|_{L_2((\tau_j^\e-\pi,\tau_j^\e+\pi))}
	\]
	Let $F_j:\R\to\R$ be the Lipschitz continuous function satisfying~$F_j=\rho$ on $J_j^\e=[\tau_j^\e-2\e,\tau_j^\e+2\e]$ and~${F_j'=\rho'(\tau_j^\e)}$ in~$\R\setminus J_j^\e$.
	 Then, for all  $|\xi|<\e$ and $\tau\in (\tau_j^\e-\pi,\tau_j^\e+\pi)$, it holds that
	 \begin{equation}\label{eq:vcv}
	 E_2(\tau)=\chi_j^\e H_{n+1,m}(\rho,\ldots,\rho)[\rho,\ldots,\rho,F_j-\rho'(\tau_j^\e){\rm id}_\R, \ov{\pi_j^\e\beta}-\pi_j^\e\beta](\tau).
	 \end{equation}
	 Indeed, we only need to establish~\eqref{eq:vcv} for \(\tau \in J_j^\e\), in which case~\(\rho(\tau) = F_j(\tau)\).
Moreover, if~\(\tau - s \in J_j^\e\), then \(\rho(\tau - s) = F_j(\tau - s)\), and thus \eqref{eq:vcv} holds.
Conversely, if \(\tau - s \not\in J_j^\e\), then for all~\({|\xi| < \e}\) we get \(\xi + \tau - s \not\in I_j^\e\).
Additionally, since \(\tau \in J_j^\e\), \(|s| < \pi\), and \(|\xi| < \e\) (with \(\e\) sufficiently small), it follows that  
\[
\xi + \tau - s \in \left(\tau_j^\e - \frac{3\pi}{2}, \tau_j^\e + \frac{3\pi}{2} \right)\qquad\text{and}\qquad
\operatorname{supp} \pi_j^\e \cap \left(\tau_j^\e - \frac{3\pi}{2}, \tau_j^\e + \frac{3\pi}{2} \right) = I_j^\e.
\]  
Thus, if \(\tau - s \not\in J_j^\e\), it follows that \(\pi_j^\e(\tau - s) = \pi_j^\e(\xi + \tau - s) = 0\), ensuring that \eqref{eq:vcv} remains valid in this case as well.
	
	Using Lemma~\ref{L:C1}~(i), for $|\xi|<\e$ we thus have, since $\rho'\in{\rm C}^{r-3/2}(\bT)$,
	 \begin{equation}\label{fcb3}
	\|E_2\|_2\leq C \|\rho'-\rho'(\tau_j^\e)\|_{L_\infty( (\tau_j^\e-2\e,\tau_j^\e+2\e))}\|\ov{\pi_j^\e\beta}-\pi_j^\e\beta\|_2\leq C\e^{r-3/2}\|\ov{\pi_j^\e\beta}-\pi_j^\e\beta\|_2.
	\end{equation}
	Finally, applying Lemma~\ref{L:C1}~(ii), we get 
	 \begin{equation}\label{fcb4}
	\|E_3\|_2\leq C \|\ov\rho -\rho\|_{H^1}\|\beta\|_2.
	\end{equation}
	Combining~\eqref{fcb1}, \eqref{fcb2}, \eqref{fcb3}, and \eqref{fcb4} we conclude that
	\begin{align*}
	[E]_{H_{r-1}}^2&=\int_{-\pi}^\pi\frac{\|\sfT_\xi E-E\|_2^2}{|\xi|^{1+2 (r-1)}}\,{\rm d}\xi\leq K\|\beta\|_2^2
	+C\e^{2(r-3/2)}\int_{-\e}^\e\frac{\|\sfT_\xi(\pi_j^\e\beta)-\pi_j^\e\beta\|_2^2}{|\xi|^{1+2 (r-1)}}\,{\rm d}\xi\\
	&\leq \theta^2[\pi_j^\e\beta]_{H^{r-1}}^2+K\|\beta\|_2^2
	\end{align*}
	for sufficiently small $\e$, and the desired claim follows now in virtue of~\eqref{fcba}.

	To prove (ii) we note, with $F_j:\R\to\R$ denoting the Lipschitz continuous function defined above, that
\begin{align*}
&\chi_j^\e \big(\sfH_{ n+1, m}(\rho)-\rho'(\tau_j^\e)\sfH_{n,m}(\rho)\big)[(\pi_j^\e\beta)']\\
&=\chi_j^\e   H_{ n+1, m}(\rho,\ldots,\rho)[\rho,\ldots,\rho,F_j-\rho'(\tau_j^\e){\rm id}_\R,(\pi_j^\e\beta)'],
\end{align*}   
and the desired claim \eqref{locholdr2} follows  from Lemma~\ref{L:C1}~(i) by arguing as in~\eqref{fcb3}.
	\end{proof}

\medskip



\bibliographystyle{siam}
\bibliography{Literature}
\end{document}